\documentclass[hidelinks,12pt,a4paper]{article}
\usepackage[utf8x]{inputenc}
\usepackage[top=30mm, bottom=30mm, inner=20mm, outer=20mm, headsep=10mm, footskip=12mm]{geometry}
\usepackage{amssymb}
\usepackage{amsmath,bm}
\usepackage{mathtools}
\usepackage{amsthm}
\usepackage[english]{babel}
\usepackage{lmodern}
\usepackage{microtype}
\usepackage[mathscr]{euscript}
\usepackage[colorlinks=true]{hyperref}
\usepackage[usenames,dvipsnames]{xcolor}
\usepackage{todonotes}
\usepackage{appendix}
\usepackage{tikz}
\usepackage{bm}\usepackage{cancel}
\usepackage{upgreek}
\usepackage[normalem]{ulem}

\numberwithin{equation}{section}

\theoremstyle{plain}
\newtheorem{definition}{Definition}[section]

\newtheorem{theorem}[definition]{Theorem}
\newtheorem{proposition}[definition]{Proposition}
\newtheorem{lemma}[definition]{Lemma}
\newtheorem{corollary}[definition]{Corollary}

\theoremstyle{definition}
\newtheorem{remark}[definition]{Remark}
\newtheorem{example}[definition]{Example}
\newtheorem*{ack}{Acknowledgements}

\makeatletter
\let\save@mathaccent\mathaccent
\newcommand*\if@single[3]{%
  \setbox0\hbox{${\mathaccent"0362{#1}}^H$}%
  \setbox2\hbox{${\mathaccent"0362{\kern0pt#1}}^H$}%
  \ifdim\ht0=\ht2 #3\else #2\fi
  }
\newcommand*\rel@kern[1]{\kern#1\dimexpr\macc@kerna}
\newcommand*\widebar[1]{\@ifnextchar^{{\wide@bar{#1}{0}}}{\wide@bar{#1}{1}}}
\newcommand*\wide@bar[2]{\if@single{#1}{\wide@bar@{#1}{#2}{1}}{\wide@bar@{#1}{#2}{2}}}
\newcommand*\wide@bar@[3]{%
  \begingroup
  \def\mathaccent##1##2{%
    \let\mathaccent\save@mathaccent
    \if#32 \let\macc@nucleus\first@char \fi
    \setbox\z@\hbox{$\macc@style{\macc@nucleus}_{}$}%
    \setbox\tw@\hbox{$\macc@style{\macc@nucleus}{}_{}$}%
    \dimen@\wd\tw@
    \advance\dimen@-\wd\z@
    \divide\dimen@ 3
    \@tempdima\wd\tw@
    \advance\@tempdima-\scriptspace
    \divide\@tempdima 10
    \advance\dimen@-\@tempdima
    \ifdim\dimen@>\z@ \dimen@0pt\fi
    \rel@kern{0.6}\kern-\dimen@
    \if#31
      \overline{\rel@kern{-0.6}\kern\dimen@\macc@nucleus\rel@kern{0.4}\kern\dimen@}%
      \advance\dimen@0.4\dimexpr\macc@kerna
      \let\final@kern#2%
      \ifdim\dimen@<\z@ \let\final@kern1\fi
      \if\final@kern1 \kern-\dimen@\fi
    \else
      \overline{\rel@kern{-0.6}\kern\dimen@#1}%
    \fi
  }%
  \macc@depth\@ne
  \let\math@bgroup\@empty \let\math@egroup\macc@set@skewchar
  \mathsurround\z@ \frozen@everymath{\mathgroup\macc@group\relax}%
  \macc@set@skewchar\relax
  \let\mathaccentV\macc@nested@a
  \if#31
    \macc@nested@a\relax111{#1}%
  \else
    \def\gobble@till@marker##1\endmarker{}%
    \futurelet\first@char\gobble@till@marker#1\endmarker
    \ifcat\noexpand\first@char A\else
      \def\first@char{}%
    \fi
    \macc@nested@a\relax111{\first@char}%
  \fi
  \endgroup
}
\makeatother

\newcommand{\R}{\mathbf R}

\newcommand{\N}{\mathbf N}

\newcommand{\Vol}{\mathrm{Vol}}

\renewcommand{\d}{\,\mathrm{d}}

\usepackage[xcolor]{changebar}
\cbcolor{blue}
\newcommand{\Ric}{\mathrm{Ric}}

\usepackage[inline]{enumitem}
\newcommand{\enumlabelformat}{\roman}

\newlength{\thelabelsep}
\newcounter{inlineenum}
\renewcommand{\theinlineenum}{\enumlabelformat{inlineenum}}
\newcommand{\blue[1]}{{#1}}

\let\epsilon\varepsilon
\let\phi\varphi

\newcommand{\diam}{\mathrm{diam}}

\newcommand{\Prob}{\mathscr{P}}

\DeclareMathOperator{\supp}{spt}

\newcommand{\eval}{\mathsf{e}}

\newcommand{\TGeo}{\mathrm{TGeo}}

\newcommand{\e}{\eval}
\newcommand{\nchi}{{\raise.3ex\hbox{$\chi$}}}

\usepackage[runin]{abstract}

\newcommand{\LpLS}{Lo\-rentz\-ian pre-length space }

\newcommand{\LpLSs}{Lo\-rentz\-ian pre-length spaces }

\DeclareMathOperator{\Sec}{Sec}

\renewcommand{\d}{\,\mathrm{d}}

\makeatletter
\let\@fnsymbol\@arabic
\makeatother

\allowdisplaybreaks

\title{Submetries in non-positive signature and applications}

\author{Matteo Zanardini \footnotemark[1]}

\date{\today}

\begin{document}

\maketitle
\footnotetext[1]{SISSA, 34136, Trieste, Italy, mzanardi@sissa.it} 
\begin{abstract}
    We consider a generalization of the notion of submetry  tailored to the setting of spacetimes. We show that, under suitable completeness assumptions, Lorentzian submetries between smooth spacetimes correspond to locally $C^{1,1}$ Semi-Riemannian submersions. Moreover, we establish some applications regarding timelike curvature bounds, isometric group actions, acausal foliations and Lorentz-Wasserstein geometry.
    %\bigskip

%\noindent
%\emph{Keywords:} Lorentzian geometry, Lorentzian submetries, optimal transport, curvature bounds
%\medskip

%\noindent
%\emph{MSC2020:}
%53C23, %Global geometric and topological methods (à la Gromov); differential geometric analysis on metric spaces
%51K10, %Synthetic differential geometry
%53C50, %Lorentz manifolds, manifolds with indefinite metrics 
%53B30 %Lorentz metrics, indefinite metrics
%in the class of Lorentzian pre-length spaces.
\end{abstract}

\tableofcontents

\section{Introduction}

In some areas of Differential Geometry, it is of particular interest the characterization of objects given in the smooth category with a suitable synthetic formulation that typically does not require any smoothness assumption. An example of such a result in Riemannian Geometry is the Theorem of Myers-Steenrod \cite{MyersSteenrod} that asserts that a function between Riemannian manifolds is a smooth isometry if and only if it is surjective and distance preserving. This kind of equivalence has also been established in the Lorentzian setting by Hawking-King-McCarthy \cite{HKM} in a foundational result asserting that a map between two strongly causal spacetimes is a smooth isometry if and only if it is bijective and preserves the respective time separation functions. This pattern of results, among other things, contributes to the development of the theory of non-smooth Differential Geometry.

In the Riemannian setting, Berestovskii defined in \cite{Berestovskii} the non-smooth analogue of a Riemannian submersion: a map $F$ between metric spaces is called a submetry if it sends closed balls of radius $R$ around a point to closed balls of radius $R$ around the image point. In \cite{BerGui}, the following compatibility result was established: a map between complete Riemannian manifolds is a submetry if and only if it is a locally $C^{1,1}$ Riemannian submersion. On the other hand, the regularity theory of submetries between Riemannian manifolds is richer and has had very interesting developments in recent times, see \cite{lytchak2024smoothness}, \cite{lytchak2024some} and \cite{KapoLytchak}. Moreover, the notion of submetries is related to rigidity results such as \cite{BerGui}, \cite{GroveGromoll}, as well as the study of curvature bounds either in the smooth setting or with a suitable synthetic formulation, e.g. \cite{BerGui}, \cite{QuotientMondino}. 

In this work we propose a generalization of the notion of submetries tailored to the setting of Lorentzian Geometry. Since this definition does not require any smoothness, we will give it, and study some properties, in the non-smooth setting of Lorentzian pre-length spaces, introduced by Kunzinger-Sämann in \cite{KunSam}.

In section \ref{sec: submetries} we give the definition of Lorentzian submetries, see Definition \ref{def: lorentzian submeetry}, and we study some properties of such maps. For instance, we establish a Lifting construction, and obtain some geometric properties. Under suitable completeness assumptions on the source and target space, we show that Lorentzian submetries between smooth spacetimes correspond to locally $C^{1,1}$ Lorentzian submersions; the main theorem of this work is summarized in the following
\begin{theorem} \label{thm: main}
    Let $(M,g), (N,h)$ be globally hyperbolic and causal geodesically complete spacetimes and $F: M \rightarrow N$ be a map. Then $F$ is a Lorentzian submetry if and only if $F$ is a surjective Lorentzian submersion of class $C^{1,1}_{loc}$.
\end{theorem}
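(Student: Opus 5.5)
Definition \ref{def: lorentzian submeetry} is not reproduced above, so I assume it is the natural analogue of Berestovskii's: $F$ is surjective and, for every $p\in M$ and $R>0$, maps the future and past ``balls'' $\{q\in M:\tau_M(p,q)\ge R\}$ onto $\{y\in N:\tau_N(F(p),y)\ge R\}$ (and similarly for the past), with $\tau$ the time separation. The plan is to transplant the proof of Berestovskii--Guijarro \cite{BerGui} to this setting, using the time separation on causal geodesics in place of the distance on minimizing geodesics.

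\emph{Submersion $\Rightarrow$ submetry.} Let $F$ be a surjective Lorentzian submersion of class $C^{1,1}_{loc}$, with $dF_p$ isometric on the horizontal space $H_p=(\ker dF_p)^\perp$ and $\ker dF_p$ spacelike. First, $F$ does not increase $\tau$ along causal curves. The derivative of a causal curve splits as a horizontal causal part plus a vertical spacelike part, so its image is causal and no shorter. Hence $F$ maps the ball $\{\tau_M(p,\cdot)\ge R\}$ into $\{\tau_N(F(p),\cdot)\ge R\}$. For surjectivity onto the target ball, take $y$ with $\tau_N(F(p),y)\ge R$. Global hyperbolicity of $N$ gives a maximizing causal geodesic $\sigma$ from $F(p)$ to $y$. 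We lift it horizontally from $p$. With only $C^{1,1}$ regularity, the horizontal distribution is Lipschitz, so horizontal lifts exist and are unique by Cauchy--Lipschitz. Causal geodesic completeness of $M$ and $N$ rules out blow-up of the lift in finite parameter. The lift has the same length as $\sigma$, so it reaches a point $q$ with $\tau_M(p,q)\ge R$ and $F(q)=y$.

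\emph{Submetry $\Rightarrow$ submersion.} This direction is harder, and I take these steps in order.
\begin{enumerate}
\item Show $F$ is continuous and preserves causality: $p\le q$ implies $F(p)\le F(q)$, and $\tau_N(F(p),F(q))\ge\tau_M(p,q)$, since $q$ lies in the $\tau_M(p,q)$-ball.
\item Use the lifting construction from Section \ref{sec: submetries}: every maximizing timelike geodesic in $N$ from $F(p)$ lifts to a maximizing timelike geodesic from $p$ of the same length. Completeness guarantees the lift is defined for all parameters.
\item Show that the fibres $F^{-1}(y)$ are achronal, and that $\tau_M(p,F^{-1}(z))=\tau_N(F(p),z)$. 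So $\tau_N(y,\cdot)\circ F$ is the Lorentzian distance from a fibre.
\item Regularity: prove the fibres are $C^{1,1}$ spacelike submanifolds and $F$ is $C^{1,1}$. Following \cite{BerGui}, fibres satisfy two-sided ``$\tau$-ball conditions'': through each point of $F^{-1}(y)$ pass lifted geodesics into both the future and the past, so the fibre is touched from both sides by level sets of time functions built from $\tau$. Locally these level sets are smooth near timelike geodesics, away from cut points, via normal coordinates. A set touched by such smooth hypersurfaces with uniformly bounded second fundamental form is $C^{1,1}$. To get the right codimension I would apply this to $\tau_N(y_i,\cdot)\circ F$ for several points $y_i$ in the past of $F(p)$ whose initial velocities span $T_{F(p)}N$. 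The map $x\mapsto(\tau_N(y_i,F(x)))_i$ is then locally a $C^{1,1}$ coordinate expression of $F$.
\item Differentiability and isometry: $dF_p$ maps the initial velocities of lifted geodesics (the horizontal space) onto $T_{F(p)}N$ preserving $g$-lengths, since lifts preserve $\tau$. Polarization, using that timelike vectors span the space, upgrades this to $dF_p|_{H_p}$ being a linear isometry.
\end{enumerate}

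The main obstacle is step 4. The Riemannian semiconcavity and support-ball arguments must be replaced by Lorentzian ones, where balls are non-compact hyperboloids and only timelike (not all) directions are controlled. I would try to work inside small globally hyperbolic convex neighbourhoods, where $\tau$ is smooth off the light cone. I would also use that lifts exist to both future and past, which should give two-sided support and hence $C^{1,1}$.
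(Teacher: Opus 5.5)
Your outline has the paper's architecture. For the easy direction it uses $1$-steepness plus horizontal lifting. For the converse it uses continuity, lifting, reduction to the real-valued maps $\tau_N(y_i,\cdot)\circ F$, and two-sided support from lifts into the future and the past. But the converse is not closed: step 4 is only a plan, and as formulated it aims at the wrong object. Touching level sets from both sides by hypersurfaces of bounded second fundamental form would at best give $C^{1,1}$ level sets. It does not by itself give differentiability of $f=\tau_N(y_i,\cdot)\circ F$, nor a formula for $\nabla f$.

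The missing idea is support at the level of functions, which $1$-steepness gives directly. Let $\sigma$ be the lift through $p$ (Theorem~\ref{thm: lifting}) of the radial geodesic of $\tau_N(y_i,\cdot)$ through $F(p)$, and set $p_\pm=\sigma(\pm r)$. The reverse triangle inequality in $N$ and $1$-steepness give, for $z$ near $p$,
\[
f(p)-r+\tau_M(p_-,z)\;\le\; f(z)\;\le\; f(p)+r-\tau_M(z,p_+),
\]
with equality at $z=p$. Both barriers are smooth near $p$, since $p$ is interior to a maximizing timelike segment, and both have gradient $\dot\sigma(0)$ at $p$. Hence $f$ is differentiable with $\nabla f(p)=\dot\sigma(0)$. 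Uniform Hessian bounds on the barriers then make $f$ semiconvex and semiconcave, hence $C^{1,1}$; this is Steps~3--4 of Theorem~\ref{thm: regularity}. The uniformity, and the continuity of $\nabla f$, require $p\mapsto p_\pm$ to be continuous. The paper proves this with the limit curve theorem, and there the fibres must be \emph{acausal}. Achronality, which is all your step 3 claims, does not prevent a limiting null curve from lying inside a fibre.

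The gradient identity is also the missing input in step 5. Write $v=\nabla\tau_N(y,\cdot)(F(p))$ and let $dF_p^*$ be the metric adjoint. The identity says the lift of $v$ is $w(v)=dF_p^{*}v$, so lifts lie in $\operatorname{im}dF_p^*=(\ker dF_p)^\perp$ and depend linearly on $v$. Then $dF_p\,dF_p^*=\mathrm{id}$ gives the isometry. Without this, calling the set of lift velocities ``the horizontal space'' and polarizing is unjustified. A priori a lift could have a vertical component, and $v\mapsto w(v)$ need not be linear. An alternative fix is infinitesimal $1$-steepness, $\|dF_pu\|_h\ge\|u\|_g$ for causal $u$. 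Together with the lifts, it shows $\ker dF_p$ is spacelike and forces the vertical part of a lift to vanish.

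In the easy direction, completeness matters only because the horizontal lift of a maximizing timelike geodesic is itself maximizing (same length plus $1$-steepness). It is therefore a geodesic, which timelike completeness of $M$ extends; completeness of $N$ is not needed. The case $R=0$ with $y$ null-related to $F(p)$ needs an approximation by timelike lifts with converging initial vectors, together with causal completeness of $M$, as in Proposition~\ref{prop: sumbetry implies submersion}.
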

Regarding the equivalence contained in Theorem \ref{thm: main}, the assumption of causal geodesic completeness is needed for only one of the two implications, indeed see Theorem \ref{thm: regularity} and Remark \ref{rem: countr}. Moreover, building on top of the Riemannian case, we show that the regularity of the map cannot be improved to $C^2$ and, in Example \ref{example desitter}, we explicitly build a Lorentzian submetry from De Sitter spacetime which is not $C^2$ by exploiting a cut-and-paste construction with Busemann functions. 

In the last section we provide some applications of Lorentzian submetries between spacetimes with either timelike sectional curvature bounds or satisfying the Strong Energy Condition \eqref{SEC}. Moreover, we establish a correspondence between submetries of a globally hyperbolic spacetime with (possibly singular) acausal and equidistant foliations. Finally we study isometric group actions on globally hyperbolic spacetimes and we show some properties of Lorentz-Wasserstein spaces that permit us to relate Lorentzian submetries and Ricci curvature bounds.

\section{Preliminaries}

\subsection{Spacetime geometry}

In this section we give a short presentation of the geometric setting we will work with, for more details we suggest \cite{Min19, BeemBook, Octet}. We start with smooth spacetimes and then we move to the description of their non-smooth counterparts.

A spacetime $(M,g)$ is a connected and non-compact smooth manifold $M$ endowed with a Lorentzian metric $g$ of signature $(+,-, \dots ,-)$ that is time oriented by a vector field $X$. A vector $v \in T_xM$ is called timelike if $g_x(v,v) >0$, null if $g_x(v,v) =0$ and spacelike if $g_x(v,v) <0$. Morever we say that $v$ is causal provided it is either timelike or null. A causal vector $v$ is called future directed provided $g(v,X) \geq 0$. A differentiable curve $\gamma:[0,1] \rightarrow M$ is called causal (resp. timelike or resp. null) if $\dot \gamma_t$ is causal (resp. timelike or resp. null) for every $t \in [0,1]$. The collection of future directed causal vectors is denoted by $F_xM \subset T_xM$ and we define the hyperbolic norm out of $g$ on the tangent space $T_xM$ by
\[
\| v \|_g : = \begin{cases}
    \sqrt{g_x(v,v)} & \text{ if } v \in F_xM \\
    0 & \text{ otherwise. } 
\end{cases}
\]
We refer to \cite[Section A.3]{Octet} and \cite{HBS} for a discussion on the topic. Moreover, we define:
\begin{enumerate}
    \item[i)] the causal order $\leq$: given two points $x_0, x_1 \in M$ we say that $x_0 \leq x_1$ provided there exists a future directed piecewise $C^1$ causal curve $\gamma$ that connects $x_0$ to $x_1$,
    \item[ii)] the chronological order $\ll$: given two points $x_0, x_1 \in M$ we say that $x_0 \ll x_1$ provided there exists a future directed piecewise $C^1$ timelike curve $\gamma$ that connects $x_0$ to $x_1$,
    \item [iii)] the time separation function $\ell$: given two points $x_0, x_1 \in M$ we define
    \[
    \ell (x_0, x_1) = \sup \int_0^1 \| \dot \gamma_t \|_g \, \d t \, ,
    \]
    where the supremum is computed with respect to all future directed piecewise $C^1$ causal curves $\gamma:[0,1] \rightarrow M$ such that $\gamma_0=x_0$ and $\gamma_1 = x_1$. If there are not such curves, we say that $\ell(x_0, x_1) =0$ by convention.
\end{enumerate} 
It is easy to check that if $x_0 \ll x_1$ then $x_0 \leq x_1$ and $\ell(x,y)>0$ if and only if $x_0 \ll x_1$. The causal order is reflexive and transitive, moreover if $x\leq y\leq z$ the time separation $\ell$ satisfies the reversed triangle inequality $\ell(x,z) \geq \ell(x,y) + \ell(y,z)$. The Lorentzian length of a curve will be denoted by $L_g(\gamma)$. A set $U \subset M$ is called causally convex if for every $\gamma :[0,1] \rightarrow M$ causal curve such that $\gamma_0, \gamma_1 \in U$ then $\gamma_t \in U$ for every $t \in [0,1]$. 

In the following definition we collect some causality and topological aspects of spacetimes.
\begin{definition}
    A spacetime $(M,g)$ is called strongly causal if it admits arbitrarily small causally convex neighborhoods at every point. $(M,g)$ is called globally hyperbolic if there are no closed causal curves and for every couple of points $x,z \in M$ we have that $\{y \in M : x \leq y \leq z\} \subset M$ is compact. Finally, a spacetime is timelike (resp. causal) geodesically complete if the exponential map $\exp_x$ is well defined for every timelike (resp. causal) vector $v \in T_xM$.
\end{definition}
Every point on a strongly causal spacetime admits causally convex normal and globally hyperbolic local basis for the topology, see \cite[Theorem 2.7]{Min19}. 

On a globally hyperbolic spacetime the time separation function is finite and continuous, moreover for every $x_0 \leq x_1$ there exists a maximizing causal geodesic $\gamma:[0,1] \rightarrow M$ such that $\gamma_0= x_0$ and $\gamma_1=x_1$. We recall that if $x \leq y \leq z$ and $\ell(x, z) > 0$, the reversed triangle inequality holds strictly unless $y$ lies on a maximizing timelike geodesic joining $x$ to $z$.

We say that a spacetime $(M,g)$ satisfies the Strong Energy Condition \eqref{SEC} provided
\begin{equation} \tag{SEC} \label{SEC}
    \Ric_x (v,v) \geq 0 \qquad \quad \text{ holds for any causal vector } v \in T_xM \, .
\end{equation}
Moreover, a timelike line is a smooth inextensible timelike geodesic $\gamma: \R \rightarrow M$ parametrized by $g$-arclength that is globally maximizing i.e.
\[
\ell_M (\gamma_s, \gamma_t) = t-s \qquad \text{for any $t>s$} \, . 
\]
In some applications we are going to use the Lorentzian Splitting Theorem \ref{thm: splitting}: see \cite{Splitting} and also \cite{ellipticsplitting} for a more recent proof with an elliptic flavor using the $p$-d'Alambertian operator. 

In \cite{KunSam}, Kunzinger-Sämann introduced the novel geometric framework of Lorentzian pre-length spaces that permits to generalize the notion of spacetimes, allowing non-smooth features. We give a short presentation of these spaces in the following definition.

\begin{definition}
    A causal space is a set $X$ endowed with a preorder $\leq$ and a transitive relation $\ll$ contained in $\leq$. A function $\ell : X \times X \rightarrow [0, \infty]$ is a called time separation function if satisfies
    \[
    \ell(x,z) \geq \ell(x,y) + \ell(y,z) \qquad \qquad \text{for any $x\leq y\leq z$}
    \]
    and $\ell(x,y) >0$ if and only if $x\ll y$ and $\ell(x,y) =0$ if $x \not \leq y$. A Lorentzian pre-length space $(X,\mathsf d,\ll,\leq,\ell)$ is a causal space $(X,\ll,\leq)$ endowed with a proper distance $\mathsf d$ (i.e., closed and bounded subsets are compact) and a lower semicontinuous time separation function $\ell$. 
\end{definition}
On a Lorentzian pre-length space there is a nice interplay between the causality encoded by $\leq, \ll,\ell$ and the topology that comes from the distance $\mathsf d$. We suggest \cite{KunSam} for some more properties of these spaces. On the other hand, we recall the alternative approach of metric spacetimes introduced in \cite{Octet}, on which the time separation function is allowed to take value $-\infty$ and it encodes completely the relations $\leq$ and $\ll$. 

On a given causal space endowed with a time separation function $(X,\ell)$ we can define the outer balls in the following way
\begin{equation} \label{eq: outer balls}
    \mathsf{O}^+(x, r) : = \{ y \geq x:\ell(x,y) \geq r \} ,\qquad   \mathsf{O}^-(x, r) : = \{ y \leq x :\ell(y,x) \geq  r \} \, , 
\end{equation}
for any given $x \in X$ and $r \geq 0$. Notice that, if we take $r=0$ in \eqref{eq: outer balls} we recover the standard causal future and causal past of the point $x$, i.e.
\[
\mathsf{O}^+(x, 0) = J^+(x),  \qquad \mathsf{O}^-(x, 0) = J^-(x) \, .
\]
In this manuscript, in order to avoid any confusion, we will denote the outer balls by $\mathsf{O}^+_X(x, r)$ whenever it not clear from the context to what space we refer to. A map $F: (X, \ell_X) \rightarrow (Y, \ell_Y)$ between causal spaces endowed with a time separation function is said to be $1$-steep provided $ \ell_Y(F(x), F(y)) \geq \ell_X(x,y)$ for every $x \leq y$ in $X$. 

We will refer to the set $X^2_{\leq} \subset X \times X$ as the set of $\leq$-related points and a subset $A \subset X$ is called acausal if for $x, y \in A$ we have that $x \leq y$ implies $x=y$. A subset $A \subset X$ is called achronal if $x \not \ll y$ for all $x,y \in A$. Moreover, if $A,B \subset X$ we will denote
\[
\ell(A,B) : = \sup \, \{ \ell(x,y) : x \in A, y \in B\} \, .
\]
A curve $\gamma \in C([0,1];X)$ is called a timelike geodesic on $X$ if it is $\mathsf d$-Lipschitz continuous, obeys $\gamma_s \ll \gamma_t$ for every $s, t \in [0, 1]$ with $s < t$, and maximizes the $\ell$-length. The set of timelike geodesics on $X$ is denoted by $\TGeo([0,1];X)$ and corresponds to the maximizing timelike geodesics, parametrized with constant speed on $[0,1]$ in globally hyperbolic spacetimes.

A subset $A \subset C([0,1];X)$ is called non-branching if for all $\gamma,\sigma \in A$ and $t \in (0,1)$ such that $\gamma_s= \sigma_s$ for all $s \in [0,t]$ or $s \in [t,1]$ then $\gamma=\sigma$. 

\subsection{Lorentzian optimal transport}

In this last subsection of the preliminaries, we collect some definitions and basic material on Lorentzian optimal transport. We suggest \cite{Braun, McCann, Octet, CavMon} for relevant information on the topic. Let $X$ be a Lorentzian pre-length space and, given two probability measures $\mu,\nu\in\mathscr{P}(X)$, we denote by $\Pi_{\leq}(\mu,\nu)$ the set of causal couplings, i.e.\ transport plans which are concentrated on causally related points. 
More precisely, 
\begin{equation*}
    \Pi_{\leq}(\mu,\nu): = \left \{ \pi \in \Prob(X \times X) : ( \mathsf{Pr}_{1})_{\#} \pi = \mu ,  ( \mathsf{Pr}_{2})_{\#} \pi = \nu \text{ and } \pi ( X^2_{\leq})=1 \right \},
\end{equation*}
where $\mathsf{Pr}_1,\mathsf{Pr}_2: X \times X \rightarrow X$ are the natural projections on the first and second component, respectively. Moreover, we say that $\mu \preceq \nu$ provided $\Pi_{\leq} (\mu, \nu) \not = \emptyset$. In an analogous way one can define the set of timelike couplings $\Pi_{\ll}(\mu, \nu)$. 

Given $q \in (0,1)$, the $q$-Lorentz--Wasserstein distance between $\mu,\nu\in\Prob(M)$ is denoted by $\ell_q(\mu,\nu)$ and it is $0$ if $\Pi_{\leq} (\mu, \nu) = \emptyset$, otherwise it is defined by the element in $[0,+ \infty]$ such that
\begin{equation} \label{eq: p- wassestein}
\ell_q(\mu,\nu)^q:= \sup_{\pi\in \Pi_{\leq}(\mu,\nu)}  \int  \ell(x,y )^q \, \pi(\d x,\d y) \, .
\end{equation}
A causal coupling $\pi$ of $\mu$ and $\nu$ is called $\ell_q$-optimal if it attains the supremum in \eqref{eq: p- wassestein}. 

In this setting, it is possible to show that $(\Prob(X), \preceq , \ell_q)$ is a causal space endowed with a time separation and if $\leq$ is a partial order on $X$, then $\preceq$ is a partial order as well, see \cite[Proposition 2.16]{Octet}. In the following definition, we consider particular couples of probability measures, introduced in \cite[Definition 4.1]{McCann}.

\begin{definition}[$q$-separated measures] \label{def: separat}
    Let $q \in (0,1)$. We say that the couple $(\mu,\nu)\in\Prob_c(X)^2$ is $q$-separated provided there exist $\pi\in \Pi_{\leq}(\mu,\nu)$ and the lower semicontinuous functions $u:\supp(\mu)\to \R\cup\{+ \infty\}$ and $v:\supp(\nu)\to \R\cup\{-\infty\}$ such that
    \[
        u(x)+v(y) \geq \tfrac{1}{q} \ell(x,y)^q \qquad \text{ for all } (x,y)\in \supp(\mu)\times \supp(\nu),
    \]
    with  $\supp(\pi)\subseteq S:=\left \{(x,y)\in \supp(\mu)\times \supp(\nu): u(x)+v(y)=\frac 1q \ell(x,y)^q \right \}$ and $S\subseteq \{\ell>0\}$.  
\end{definition}
A probability measure $\boldsymbol{\pi} \in \Prob(C([0,1];X))$ is said to represent the path $[0,1] \ni t \mapsto \mu_t \in \Prob(X)$ provided $(\e_t)_{\#}\boldsymbol{\pi}= \mu_t$ for all $t \in [0,1]$, where $\e_t$ is the evaluation map of at time $t$. \\
Accordingly to \cite{McCann}, we will refer to a $q$-geodesic in $\Prob (X)$ by a weakly continuous curve $[0,1] \ni t \mapsto \mu_t \in \Prob (X)$ satisfying
\[
    \ell_q(\mu_s,\mu_t)= (t-s) \, \ell_q(\mu_0,\mu_1)\in
    (0,\infty) \quad \text{for all } 0\leq s<t\leq 1\, .
\]
Let us endow $X$ with a Radon measure $\mathfrak{m}$ with full support. We will refer to such a couple as a measured Lorentzian pre-length space. 
\begin{definition}
    Let $q \in (0,1)$. A measured Lorentzian pre-length space is said to be $q$-essentially non-branching if every $\boldsymbol{\pi}$ representing a $q$-geodesic between measures in $\Prob^{ac}_c(X, \mathfrak m)$ is concentrated on a non-branching subset.
\end{definition}
Given any $N \in [1, \infty)$ we define the $N$-Renyi entropy $S_N : \Prob (X) \rightarrow [-\infty, 0]$ by
\begin{equation}
    S_N(\mu) = - \int_X \rho(x)^{1-\tfrac{1}{N}} \, \mathfrak{m}(\d x) \, , 
\end{equation}
where $\mu= \rho \mathfrak{m}+ \mu^{\perp}$ is the Lebesgue decomposition with respect to $\mathfrak{m}$.
Given $t \in[0,1]$, $K \in \R$, and $N \in [1,\infty)$ it is possible to define the distortion coefficients at $\theta \geq 0$ via
\begin{equation}
    \sigma^{(t)}_{K,N} (\theta) : = \begin{cases}
        \frac{\sin_{K/N}(t\theta)}{\sin_{K/N}(\theta)} & \text{if } K\theta^2 < N \pi^2 \, , \\
        + \infty & \text{otherwise , }
    \end{cases}  \qquad \tau_{K,N}^{(t)}(\theta) : = t^{1/N} \sigma_{K,N-1}^{(t)} (\theta) ^{1-1/N} \, .
\end{equation}

Finally, in the following definition we introduce the $\mathsf{TCD}_q(K,N)$ condition. We suggest \cite{Braun} for other similar notions of timelike curvature-dimension conditions and, among other things, the proof of the equivalence of them in the $q$-essentially non-branching case. 
\begin{definition} \
    Let $q \in (0,1)$, $K \in \R$, and $N \in [1,\infty)$. We say that the Lorentzian pre-length space $X$ endowed with the fully supported Radon measure $\mathfrak{m}$ satisfies the $\mathsf{TCD}_q(K,N)$ condition if for every  $q$-separated $(\overline{\mu}_0, \overline{\mu}_1) \in \Prob_c^{ac}(X, \mathfrak{m})^2$ there exists $q$-geodesic $(\mu_t)$ connecting $\overline{\mu}_0$ to $\overline{\mu}_1$ and an $\ell_q$-optimal coupling $\pi \in \Pi_{\leq}(\overline{\mu}_0, \overline{\mu}_1)$ such that for every $t \in[0,1]$ and every $N' \geq N$ we have
    \begin{equation} \label{eq: TCD ineq}
        S_N (\mu_t) \leq - \int \tau_{K, N'}^{(1-t)} (\ell(x,y)) \,  \rho_0(x)^{-\tfrac{1}{N'}} + \tau_{K, N'}^{(t)} (\ell(x,y)) \,  \rho_1(y)^{-\tfrac{1}{N'}} \, \pi (\d x,\d y) \, .
    \end{equation}
\end{definition}

We recall that, in a globally hyperbolic spacetime, given $q$-separated measures $(\overline{\mu}_0, \overline{\mu}_1) \in \Prob_c^{ac}(M, \Vol_g)^2$ there exists a unique $q$-geodesic $(\mu_t)$ connecting $\overline{\mu}_0$ to $\overline{\mu}_1$ and a unique $\ell_q$-optimal coupling $\pi \in \Pi_{\leq}(\overline{\mu}_0, \overline{\mu}_1)$, see \cite[Theorem 5.8 \& Corollary 5.9]{McCann}. 

We conclude the preliminary section by recalling a compatibility result with the smooth setting pioneered by McCann in \cite{McCann}, see also \cite{MondinoSuhr}, \cite{Braun} and \cite{BraunOhta} for the equivalence result in the Lorentz-Finsler setting.

\begin{theorem} \label{thm: compatibility}
Let $q \in (0,1)$, $K \in \R$ and $N \geq 1$. Then, a globally hyperbolic spacetime $(M,g)$ with dimension $\dim(M) \leq N$ endowed with the volume measure $\Vol_g$ satisfies the $\mathsf{TCD}_q(K,N)$ condition if and only if $\Ric(v,v) \geq Kg(v,v)$ for any causal vector $v$. 
\end{theorem}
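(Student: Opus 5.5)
The plan is to follow McCann's Eulerian–Lagrangian strategy from \cite{McCann}. I would prove the two implications separately. The smooth Jacobian analysis of optimal transport along timelike geodesics carries the curvature information into the entropy inequality \eqref{eq: TCD ineq}.

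\emph{Sufficiency (Ricci bound $\Rightarrow$ $\mathsf{TCD}_q(K,N)$).}

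1. Fix a $q$-separated pair $(\overline{\mu}_0,\overline{\mu}_1)\in\Prob_c^{ac}(M,\Vol_g)^2$ with potentials $u,v$ as in Definition \ref{def: separat}.

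2. Since $S\subseteq\{\ell>0\}$ and the supports are compact, the cost $\tfrac1q\ell^q$ is smooth and locally semiconvex near $S$. Hence $u$ is locally semiconvex, and by Alexandrov's theorem it is twice differentiable $\overline{\mu}_0$-a.e.

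3. Solving the first-order condition gives the optimal map $T_t(x)=\exp_x\big(t\,\Psi(Du(x))\big)$, where $\Psi$ is the Legendre-type inverse of the $q$-Lagrangian. The target of $\Psi$ lies in the timelike cone, again because $S\subseteq\{\ell>0\}$.

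4. This yields the unique $q$-geodesic $\mu_t=(T_t)_\#\overline{\mu}_0$ and the unique optimal coupling $\pi=(\mathrm{id},T_1)_\#\overline{\mu}_0$ (cf.\ \cite[Theorem 5.8]{McCann}).

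5. Write $J_t(x)=\det DT_t(x)$. The Monge–Amp\`ere equation $\rho_t(T_t x)J_t(x)=\rho_0(x)$ holds a.e.

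6. Represent $DT_t(x)$ by a matrix $A(t)$ of Jacobi fields along the geodesic $\gamma_t=T_t(x)$. The initial datum of $A$ is symmetric, coming from the Hessian of $u$.

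7. The Riccati equation for $B=A'A^{-1}$ on the orthogonal complement of $\dot\gamma$ gives a differential inequality. Combined with $\Ric(\dot\gamma,\dot\gamma)\ge K g(\dot\gamma,\dot\gamma)=K\ell(x,T_1x)^2$ and $\dim M\le N$, the usual split $J=J_\parallel J_\perp$ yields
\[
J_t^{1/N'}\ \ge\ \tau^{(1-t)}_{K,N'}(\ell)\,J_0^{1/N'}+\tau^{(t)}_{K,N'}(\ell)\,J_1^{1/N'} .
\]
Here the parallel factor is affine in $t$ and the orthogonal factor is $\sigma_{K,N'-1}$-concave.

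8. Dividing by $\rho_0^{1/N'}$ and integrating against $\overline{\mu}_0$ via the change of variables gives exactly \eqref{eq: TCD ineq}.

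\emph{Necessity ($\mathsf{TCD}_q(K,N)$ $\Rightarrow$ Ricci bound).}

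1. Argue by contradiction: assume $\Ric(v,v)<Kg(v,v)$ for some unit timelike $v\in T_xM$.

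2. Choose a smooth local function $u$ with $Du(x)$ chosen so that $\Psi(Du(x))=\epsilon v$ and with prescribed Hessian at $x$. Pick the Hessian so that the initial second fundamental form $B(0)$ is a multiple of the identity on $v^\perp$. This makes the Riccati inequality close to an equality.

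3. For small $\epsilon$ and small balls around $x$, the pair $(\mathrm{Unif}(B_r(x)),(T_1)_\#\mathrm{Unif}(B_r(x)))$ is $q$-separated. The potential $u$ is $c$-concave there by a local argument.

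4. By uniqueness, the $q$-geodesic and coupling furnished by $\mathsf{TCD}_q(K,N)$ must coincide with the smooth ones. Expanding both sides of \eqref{eq: TCD ineq} to second order in $\epsilon$ then produces a strict violation, controlled by $\Ric(v,v)-Kg(v,v)<0$.

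5. For a null $v$, the condition $\Ric(v,v)\ge Kg(v,v)$ is closed in $v$, so it follows by approximating with timelike vectors.

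\emph{Main obstacle.} I expect the hardest part to be the regularity and non-degeneracy in the sufficiency direction. One has to show that $u$ is semiconvex and that $T_t$ is a.e.\ injective with $J_t>0$ for $t\in(0,1)$, so that no mass concentrates and the Jacobian computation is valid a.e. This is precisely where the $q$-separation (keeping $\supp\pi$ away from the null cone) and global hyperbolicity (compactness of causal diamonds, existence of maximizers) are used. I would try first to reproduce McCann's argument that geodesics in $\supp\pi$ do not cross at intermediate times, using strict concavity of $\ell^q$ along timelike directions.
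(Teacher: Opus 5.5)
Your route is the one the paper relies on. The paper does not prove this statement; it quotes it from McCann \cite{McCann}, with \cite{MondinoSuhr,Braun} for the $\tau_{K,N}$ form, and you reproduce McCann's Eulerian--Lagrangian argument. The weak point is step 7, which is exactly where the Lorentzian proof departs from the Riemannian one.

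\textbf{Sufficiency.} You do not say with respect to which inner product the initial datum of $A$ is symmetric, and the ``usual split'' fails if you read it as $g$. First, $B=A'A^{-1}$ is in general not $g$-self-adjoint. Second, even a $g$-self-adjoint $B$ would not help. In a $g$-orthonormal frame $e_1=\dot\gamma/|\dot\gamma|,e_2,\dots,e_n$ it satisfies $b_{j1}=-b_{1j}$. The $(1,1)$-entry of $B'+B^2+R(\cdot,\dot\gamma)\dot\gamma=0$ then reads $b_{11}'+b_{11}^2=\sum_{j\ge2}b_{1j}^2\ge0$. The same cross terms enter the equation for $\operatorname{tr}B_\perp$ with the wrong sign, and $\operatorname{tr}(B^2)\ge(\operatorname{tr}B)^2/n$ can fail, since eigenvalues $\pm ib$ are possible.

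The missing ingredient is the Hessian of the $q$-Lagrangian $L(w)=-\frac1q g(w,w)^{q/2}$:
\begin{itemize}
\item[(a)] The form $\tilde g:=D^2L(\dot\gamma)$ equals $|\dot\gamma|^{q-2}\mathrm{diag}(1-q,1,\dots,1)$ in the frame above. So it is positive definite exactly because $q<1$, and it is parallel along $\gamma$.
\item[(b)] Since $\nabla_tA(0)=D\Psi(Du)\,\nabla^2u=\tilde g^{-1}\nabla^2u$, the initial datum is $\tilde g$-symmetric.
\item[(c)] This symmetry persists because the Wronskian $\tilde g(\nabla_tJ_1,J_2)-\tilde g(\nabla_tJ_2,J_1)$ of Jacobi fields is constant. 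Indeed, $\tilde g(X,\cdot)=-|\dot\gamma|^{q-2}g(X,\cdot)$ for $X\perp\dot\gamma$, and $R(\cdot,\dot\gamma)\dot\gamma$ takes values in $\dot\gamma^\perp$.
\end{itemize}
In a $\tilde g$-orthonormal frame $B$ is then a real symmetric matrix, and you get
\[
b_{11}'+b_{11}^2=-\sum_{j\ge2}b_{1j}^2\le0,\qquad (\operatorname{tr}B_\perp)'+\frac{(\operatorname{tr}B_\perp)^2}{n-1}+\Ric(\dot\gamma,\dot\gamma)\le0 .
\]
This makes $J_\parallel$ concave, not affine as you claim; it is affine only when $b_{1j}\equiv0$. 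Concavity is all the factor $t^{1/N'}$ in $\tau_{K,N'}$ needs in the H\"older step.

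A smaller point concerns step 2. The cost is not smooth near $S$, which may meet the timelike cut locus. What is available is local semiconvexity of $\ell$ on $\{\ell>0\}$ \cite[Theorem 3.6]{McCann}. This suffices once the supremum defining $u$ is restricted to pairs where $\ell$ is bounded below.

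\textbf{Necessity.} Prescribing $B(0)$ to be a multiple of the identity on $v^\perp$ is not enough. The Cauchy--Schwarz and H\"older steps behind $\tau_{K,N}$ leave a slack of order $|B(0)|^2$. Examples are $\lambda^2$ from an expanding orthogonal factor paired with a non-expanding parallel one, or $\sum_j b_{1j}(0)^2$ from the mixed entries. This slack can swamp the curvature term, which is only $O(\epsilon^2)$.

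You need all of $B(0)$ to be $o(\epsilon)$; the simplest choice is $\nabla^2u(x)=0$. Then $J_t=1-\frac{t^2\epsilon^2}{2}\Ric(v,v)+o(\epsilon^2)$. The pointwise form of \eqref{eq: TCD ineq},
\[
J_t^{1/N}\ge\tau^{(1-t)}_{K,N}(\epsilon)+\tau^{(t)}_{K,N}(\epsilon)\,J_1^{1/N},
\]
has defect $\frac{\epsilon^2t(1-t)}{2N}\big(\Ric(v,v)-K\big)+o(\epsilon^2)<0$, which integrates to a contradiction. This choice is admissible for $q$-separation, because the Hessian of $z\mapsto-\frac1q\ell(z,\exp_x(\epsilon v))^q$ at $x$ is positive definite of order $\epsilon^{q-2}$.

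The remaining steps are fine: uniqueness of the $q$-geodesic and the coupling, and the null case by approximation.
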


\section{The Lorentzian submetries} \label{sec: submetries}

\subsection{Submersions between spacetimes}

The aim of this section is to study Lorentzian submersions between spacetimes and introduce a characterization of these maps using the time separation functions. We suggest the works by O'Neill \cite{oneillBook, OneillSUB} for an introduction to the more general Semi-Riemannian submersions between Semi-Riemannian manifolds and their associated tensorial objects. In the following definition we recall the notion of Lorentzian submersion.

\begin{definition} [Lorentzian submersion] \label{def: Lor submersion}
    Let $(M,g)$ and $(N,h)$ be spacetimes and $F: M \rightarrow N$ be a differentiable map. We say that $F$ is a Lorentzian submersion if, for every $x \in M$, the map
    \[
    \d F_x \, \big |_{\ker (\d F_x)^{\perp}} \, : \ker (\d F_x)^{\perp} \rightarrow T_{F(x)} N
    \]
    is an isometry between Lorentzian linear spaces that preserves the time orientation.
\end{definition}

\begin{remark}
    Notice, that if $F: M \rightarrow N$ is a Lorentzian submersion between spacetimes in the sense of Definition \ref{def: Lor submersion} then $\ker(\d F_x) \subset T_xM$ is a spacelike subspace since its orthogonal complement is isometric to a Lorentzian tangent space. Moreover, the decomposition 
    \[
    T_xM = \ker (\d F_x)^{\perp}  \oplus \ker (\d F_x) \, ,
    \]
    permits to say that a vector $v \in T_xM$ is horizontal provided $v \in \ker (\d F_x)^{\perp}$ and it is vertical provided $v \in \ker (\d F_x)$. It is clear that every non-zero vertical vector is spacelike. The vertical distribution induced by a smooth Lorentzian submersion is always integrable by the fibers of the map. The same result for the horizontal distribution is not true in general: an obstruction to integrability is given by the O'Neill tensor $A$, see \cite{OneillSUB}. \hfill$\blacksquare$
\end{remark}

The following lemma shows that a $C^1$ Lorentzian submersion is always a $1$-steep map.

\begin{lemma} \label{lem: steep}
    Let $(M,g)$, $(N,h)$ be spacetimes and $F: M \rightarrow N$ be a $C^1$ Lorentzian submersion. Then $F$ is a $1$-steep map.
\end{lemma}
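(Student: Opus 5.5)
The plan is to reduce the statement to a pointwise inequality between hyperbolic norms and then integrate along curves. The first step is linear algebra. At a point $x\in M$, decompose any $v\in T_xM$ as $v=v^h+v^v$ with $v^h\in\ker(\d F_x)^\perp$ and $v^v\in\ker(\d F_x)$. Since the vertical space is spacelike (negative definite for the signature $(+,-,\dots,-)$), we get
\[
g(v,v)=g(v^h,v^h)+g(v^v,v^v)\le g(v^h,v^h)=h(\d F_x v,\d F_x v),
\]
because $\d F_x v=\d F_x v^h$ and $\d F_x$ restricted to the horizontal space is an isometry. Now let $v$ be future directed causal. Then $\d F_x v$ has $h(\d F_xv,\d F_xv)\ge g(v,v)\ge0$, so it is causal or zero.

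The second step is to rule out $\d F_xv=0$ for nonzero causal $v$ and to check that the image is future directed. If $v^h=0$, then $v$ is vertical and nonzero, hence spacelike, which is a contradiction. So $v^h\neq0$ and $g(v^h,v^h)\ge -g(v^v,v^v)\ge 0$, which makes $v^h$ causal. To see that $v^h$ lies in the same time cone as $v$, use continuity along $v_s=v^h+s v^v$ for $s\in[0,1]$. We have $g(v_s,v_s)=g(v^h,v^h)+s^2g(v^v,v^v)\ge g(v,v)\ge0$, and $v_s\neq0$ throughout, so the path never leaves the causal cone minus the origin. Hence $v^h$ and $v$ lie in the same half of the cone. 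Since $\d F_x$ preserves time orientation on horizontal vectors, $\d F_x v$ is future directed causal, with
\[
\|\d F_xv\|_h\ge\|v\|_g .
\]

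The third step is to integrate. Let $x\le y$ in $M$ and $\gamma$ be any future directed piecewise $C^1$ causal curve from $x$ to $y$. Because $F$ is $C^1$, $F\circ\gamma$ is piecewise $C^1$ with velocity $\d F(\dot\gamma_t)$, which by the previous step is future directed causal. So $F(x)\le F(y)$, and
\[
L_h(F\circ\gamma)=\int_0^1\|\d F\dot\gamma_t\|_h\,\d t\ge\int_0^1\|\dot\gamma_t\|_g\,\d t=L_g(\gamma).
\]
Taking the supremum over $\gamma$ gives $\ell_N(F(x),F(y))\ge L_g(\gamma)$ for every such $\gamma$, hence $\ell_N(F(x),F(y))\ge\ell_M(x,y)$. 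If $x\le y$ only through the trivial case $x=y$, the inequality is immediate.

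The only delicate point is the time orientation of $v^h$ in the second step. The continuity argument in the cone handles it, since it only uses that the vertical space is spacelike. No global assumption on the spacetimes is needed.
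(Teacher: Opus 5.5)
Your proof is correct and follows essentially the same route as the paper: split $\dot\gamma_t$ into horizontal and vertical parts, use that the vertical part is spacelike to get $\|\d F\,\dot\gamma_t\|_h\ge\|\dot\gamma_t\|_g$, and integrate along the future directed causal curve $F\circ\gamma$. Your continuity argument in the causal cone, which shows that the horizontal part is future directed causal, supplies a detail the paper only asserts.
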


\begin{proof}
Let $x_0 \leq x_1$ in $M$ and $\gamma :[0,1] \rightarrow M$ be a piecewise $C^1$ future directed $g$-causal curve such that $\gamma_0 = x_0$ and $\gamma_1 = x_1$. At each time $t \in [0,1]$ for which $\dot \gamma_t$ exits we consider its decomposition with respect to $T_{\gamma_t}M = \ker(\d F_{\gamma_t}) \oplus \ker(\d F_{\gamma_t})^{\perp}$, say $\dot \gamma_t = (\dot \gamma_t)^{\parallel} + (\dot \gamma_t)^{\perp}$. It is clear that $(\dot \gamma_t)^{\parallel} \in \ker(\d F_{\gamma_t})$ is $g$-spacelike (or zero), and so $\| (\dot \gamma_t)^{\perp} \|_g \geq \| \dot \gamma_t \|_g \geq 0$. Moreover, since $F$ is a Lorenztain submersion, we have that $\d F_{\gamma_t} [\dot \gamma_t ]$ is future directed and 
    \begin{equation} \label{eq: id norm}
     \|\d F_{\gamma_t} [\dot \gamma_t ] \|_h = \| (\dot \gamma_t )^{\perp} \|_g \, .
    \end{equation}
    Moreover, $\sigma_t : = F \circ \gamma_t : [0,1] \rightarrow N$ is a piecewise $C^1$ future directed $h$-causal curve since $\dot \sigma _t = \d F_{\gamma_t} [\dot \gamma_t ]$ exists at each point for which $\dot \gamma_t$ exists and it is manifestly a future directed $h$-causal vector. Hence 
    \begin{equation}
        \ell_N (F(x_0), F(x_1)) \geq \int_0^1 \| \dot \sigma_t \|_h \,  \d t= \int_0^1 \| (\dot \gamma_t )^{\perp} \|_g \, \d t \geq \int_0^1 \| \dot \gamma_t \|_g \, \d t \, .
    \end{equation}
    By arbitrariness of the curve $\gamma$ we get that $F$ is $1$-steep.
\end{proof}

In the following lemma we exploit a first lifting construction using a $C^1$ Lorenztain submersion.

\begin{lemma} \label{lem: C1 lifting}
    Let $(M,g)$ be a strongly causal and timelike geodesically complete spacetime. Let $(N,h)$ be a strongly causal spacetime and let $F:M \rightarrow N$ be a surjective $C^1$ Lorentzian submersion. Let $x \in N$, $y \in F^{-1}(x)$ and $\gamma:[-a,b] \rightarrow N$ be a timelike geodesic parametrized by $h$-arclength such that $\gamma_0=x$. Then there exists a timelike geodesic $\sigma:[-a,b] \rightarrow M$ parametrized by $g$-arclength such that $\sigma_0=y$, $F(\sigma_t)= \gamma_t$ for every $t \in [-a,b]$ and $L_g(\sigma) = L_h(\gamma)$. In particular, $(N,h)$ is timelike geodesically complete.
\end{lemma}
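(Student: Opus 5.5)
The plan is to build $\sigma$ as a horizontal lift of $\gamma$ through $y$ by solving an ODE, and then to use $1$-steepness (Lemma \ref{lem: steep}) to show that this lift is locally maximizing, hence a geodesic. Only continuity of $\d F$ is available, so the lift is obtained from Peano's theorem. No O'Neill-type tensor computation (which would need $C^2$) is used.

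\emph{Step 1: a horizontal lift exists locally.} For $z \in M$ and $w \in T_{F(z)}N$, let $H_z(w) \in \ker(\d F_z)^{\perp}$ be the unique horizontal vector with $\d F_z[H_z(w)] = w$. Since $F$ is $C^1$, $\ker(\d F_z)$ has constant rank and depends continuously on $z$. Hence $H$ is continuous in $(z,w)$.
\begin{itemize}
\item Consider the ODE $\dot\sigma_t = H_{\sigma_t}(\dot\gamma_t)$, which is well posed as long as $F(\sigma_t) = \gamma_t$.
\item To make it a genuine ODE on $M$, impose the invariant $F(\sigma_t)=\gamma_t$: any solution of $\dot\sigma_t = H_{\sigma_t}(\dot\gamma_{\tau(\sigma_t)})$, with $\tau$ defined locally via a chart of $N$ adapted to $\gamma$, satisfies $\frac{\d}{\d t}F(\sigma_t) = \dot \gamma_t$. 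An equivalent route is to lift in local submersion coordinates where $F$ is a projection.
\item The right-hand side is continuous, so Peano's theorem gives a $C^1$ solution on a small interval around $0$ with $\sigma_0 = y$ and $F \circ \sigma = \gamma$.
\item By horizontality and \eqref{eq: id norm}, $\dot\sigma_t$ is future directed timelike with $\|\dot\sigma_t\|_g = \|\dot\gamma_t\|_h = 1$. In particular $L_g(\sigma|_{[s,t]}) = t-s$.
\end{itemize}

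\emph{Step 2: the lift is locally maximizing, hence a geodesic.} Fix $s$ and choose, by strong causality of $N$, a causally convex convex normal neighborhood $U$ of $\gamma_s$. Inside $U$ the segment of the geodesic $\gamma$ through $\gamma_s$ maximizes length among causal curves contained in $U$. For $t$ close to $s$, let $c$ be any causal curve in $M$ from $\sigma_s$ to $\sigma_t$.
\begin{itemize}
\item $F\circ c$ is causal with endpoints in $U$, so by causal convexity it stays in $U$.
\item Therefore, as in the proof of Lemma \ref{lem: steep},
\[
L_g(c) \leq L_h(F\circ c) \leq L_h(\gamma|_{[s,t]}) = t-s = L_g(\sigma|_{[s,t]}) \, .
\]
\end{itemize}
So $\sigma$ is locally length-maximizing and parametrized with unit speed. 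A locally maximizing timelike curve in a spacetime is, after reparametrization, a timelike geodesic; here the parametrization is already by $g$-arclength. Hence $\sigma$ is a smooth unit-speed timelike geodesic.

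\emph{Step 3: extension to $[-a,b]$ and completeness of $N$.}
\begin{itemize}
\item Let $[0,T)$ be a maximal interval of existence of a lift with $T<b$. On $[0,T)$ the lift is a unit-speed geodesic by Step 2.
\item By timelike geodesic completeness of $M$, it extends as a geodesic beyond $T$. In particular $\sigma_T := \lim_{t\to T}\sigma_t$ exists and $F(\sigma_T) = \gamma_T$.
\item Restarting Step 1 at $\sigma_T$ and concatenating gives a longer lift, contradicting maximality. The concatenation is again a geodesic by Step 2 applied around $T$.
\item The same argument works backwards in time on $[-a,0]$, and $L_g(\sigma)=L_h(\gamma)$ follows from Step 1.
\item For completeness of $N$, suppose some timelike geodesic $\gamma$ of $N$ is inextendible with finite endpoint $b$ of its domain. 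Lift it on $[0,b)$. The lift extends past $b$ in $M$, so both $\gamma_t = F(\sigma_t)$ and $\dot\gamma_t = \d F[\dot\sigma_t]$ converge as $t\to b$. Hence $\gamma$ extends as a geodesic, a contradiction.
\end{itemize}

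The main obstacle is Step 1. With only $C^1$ regularity the horizontal distribution is merely continuous, so lifts need not be unique, and the invariant $F(\sigma_t)=\gamma_t$ must be built into the ODE with care. I would handle this first in local coordinates in which $F$ is a projection, lifting there and checking the constraint directly. This is also why geodesy is derived from the variational inequality in Step 2 rather than from curvature identities.
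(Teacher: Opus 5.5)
Your proposal is correct and follows essentially the same route as the paper. The paper also builds a horizontal lift by Peano's theorem: it lifts a smooth timelike geodesic vector field $Z$ having $\gamma$ as an integral curve, so that $F\circ\sigma=\gamma$ follows from uniqueness of integral curves of $Z$, which is the same device as your chart-adapted field or submersion coordinates. It then uses the $1$-steepness estimate to get local maximality, hence that $\sigma$ is a geodesic, and timelike geodesic completeness of $M$ to extend. Your Step 3 continuation argument and the explicit limit argument for completeness of $N$ spell out carefully what the paper compresses into a single sentence.
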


\begin{proof}
Let $Z$ be a smooth timelike geodesic vector field defined on an open set $U \subset N$ for which $\gamma$ is an integral curve. We can lift $Z$ to a continuous horizontal timelike vector field $\tilde{Z}$ defined on $F^{-1}(U) \subset M$ by
\[
F^{-1}(U) \ni z \mapsto \tilde Z_{z} : = \left [ \d F_z \big |_{\ker (\d F_z)^{\perp}} \right ] ^{-1} \circ Z_{F(z)} \, ,
\]
moreover notice that $\| Z_{F(z)} \|_h= \|\tilde Z_z\|_g$. Hence, by Peano's local existence Theorem, there exists a $C^1$ curve $\sigma_t$ such that $\dot \sigma_t = \tilde Z_{\sigma_t}$ and $\sigma_0=y$. Notice that $F\circ \sigma_t$ is a $C^1$ curve satisfying 
\[
\dfrac{\d}{\d s} \bigg |_{s=t} F\circ \sigma_s = Z_{F(\sigma_t)} \, .
\]
Hence $F \circ \sigma_t$ is a timelike geodesic on $N$ with $F(\sigma_0)=x$ and $Z_{x}= \dot \gamma_0$. In particular, by uniqueness, we have that $F\circ \sigma_t= \gamma_t$ and, for $\varepsilon >0$ sufficiently small, by Lemma \ref{lem: steep} we have that
\[
L_g \left (\sigma \big |_{(-\varepsilon, \varepsilon)} \right ) \leq \ell_M (\sigma_{-\varepsilon}, \sigma_{\varepsilon}) \leq \ell_N (\gamma_{-\varepsilon}, \gamma_{\varepsilon}) = L_h \left (\gamma \big |_{(-\varepsilon, \varepsilon)} \right ) = L_g \left (\sigma \big |_{(-\varepsilon, \varepsilon)} \right ) \, .
\]
It is clear, that $\sigma$ is a maximizing timelike geodesic on $M$ parametrized by $g$-arclength. Since $(M,g)$ is timelike geodesically complete it follows that $\sigma$ is defined for every $t \in \R$. The identity $F\circ \sigma_t = \gamma_t$ implies that $(N,h)$ is timelike geodesically complete.
\end{proof}

In the following proposition we show that, under some assumptions, a Lorentzian submersion maps outer balls with some given radius at $x$ in outer balls of the same radius and origin $F(x)$.

\begin{proposition} \label{prop: sumbetry implies submersion}
    Let $(M,g)$ and $(N,h)$ be globally hyperbolic spacetimes. Suppose that $(M,g)$ is causal geodesically complete and let $F: M \rightarrow N$ be a surjective $C^1$ Lorentzian submersion. Then, for any $x \in M$ and $r \geq 0$, we have that
    \[
    F( \mathsf{O}_M^+(x, r) ) = \mathsf{O}_N^+(F(x), r) , \, \qquad \, F( \mathsf{O}_M^-(x, r) ) = \mathsf{O}_N^- (F(x), r) \, .
    \]
\end{proposition}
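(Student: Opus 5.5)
Both equalities follow the same scheme, and the past version reduces to the future one by reversing time orientation: the time-reversed metrics are still globally hyperbolic and causal geodesically complete, and $F$ is still a Lorentzian submersion for them. So I only treat $F(\mathsf{O}_M^+(x,r)) = \mathsf{O}_N^+(F(x),r)$ and prove the two inclusions separately.

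\emph{Inclusion $\subseteq$.} This is immediate from Lemma \ref{lem: steep}. If $y \in \mathsf{O}^+_M(x,r)$, then $x \leq y$. Pushing a future directed causal curve from $x$ to $y$ through $F$ gives a future directed causal curve in $N$, exactly as in the proof of that lemma, so $F(x) \leq F(y)$. Moreover $\ell_N(F(x),F(y)) \geq \ell_M(x,y) \geq r$, hence $F(y) \in \mathsf{O}^+_N(F(x),r)$.

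\emph{Inclusion $\supseteq$.} Let $z \in \mathsf{O}^+_N(F(x),r)$. By global hyperbolicity of $N$ there is a maximizing causal geodesic $\gamma:[0,1]\to N$ from $F(x)$ to $z$ with $L_h(\gamma)=\ell_N(F(x),z) \geq r$. I want a horizontal lift $\sigma$ of $\gamma$ starting at $x$. Then $\sigma$ ends at some $y \in F^{-1}(z)$, and since $\sigma$ is horizontal it is future directed causal with $L_g(\sigma) = L_h(\gamma)$. This gives $x \leq y$ and $\ell_M(x,y) \geq L_g(\sigma) \geq r$, so $z = F(y) \in F(\mathsf{O}^+_M(x,r))$.

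\emph{Timelike case.} If $\gamma$ is timelike, I reparametrize it by arclength and apply Lemma \ref{lem: C1 lifting}. Its hypotheses hold: globally hyperbolic implies strongly causal, and causal completeness implies timelike completeness. The lemma yields a lift defined on the whole interval with the same length.

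\emph{Null case.} Here $r=0$ and $\gamma$ is a null geodesic, and the lift must be constructed directly; this is the main obstacle. I would copy the construction in Lemma \ref{lem: C1 lifting}. Extend $\dot\gamma$ locally to a geodesic vector field $Z$ (for instance via a local family of geodesics), lift it horizontally to the continuous field $\tilde Z$, and solve $\dot\sigma = \tilde Z_\sigma$ by Peano. Then $F\circ\sigma$ solves $\dot{(F\circ \sigma)} = Z_{F\circ\sigma}$, so by uniqueness for the smooth field $Z$ it equals $\gamma$ on a short interval. The lift $\sigma$ is future directed null, since horizontal lifting is an isometry preserving time orientation.

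\emph{Extending the lift to the whole interval.} The difficulty is that Peano gives only local solutions. I would show that a maximal lift on $[0,t^*)$ cannot escape to infinity before $t^*<1$. Each partial lift stays in $J^+(x)\cap J^-(w)$ for a suitable $w$: take $w$ in the future of a lift point over a slightly extended $\gamma$, or use that $F(\sigma_t) \leq z$ and construct lifts through the fiber. This set is compact by global hyperbolicity, so the lift has an accumulation point and the local construction restarts there. This is where causal geodesic completeness of $M$ would enter.

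\emph{Alternative for the null case.} If the extension argument is troublesome, I would instead approximate $z$ by points $z_n \gg F(x)$ with $z_n \to z$. The timelike case gives $y_n \geq x$ with $F(y_n)=z_n$. I then need the $y_n$ to stay in a compact set: the lifts have length $\ell_N(F(x),z_n)$, which is bounded, and their initial velocities are horizontal lifts of bounded vectors, so the $y_n$ lie in a compact image of the exponential map, using completeness. A subsequence gives $y_n \to y$ with $x \leq y$, because $J^+$ is closed in globally hyperbolic spacetimes, and $F(y)=z$ by continuity of $F$.
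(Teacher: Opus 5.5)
Your inclusion $\subseteq$ via Lemma~\ref{lem: steep}, your timelike case via Lemma~\ref{lem: C1 lifting}, and your ``alternative for the null case'' are exactly the paper's proof. The paper approximates $z$ by points $z_n\gg F(x)$, lifts the initial velocities horizontally, and passes to the limit using causal geodesic completeness of $M$. Keep that route.

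Your primary null-case route has a genuine gap at the continuation step. You never show that a partial lift $\sigma|_{[0,t^*)}$ lies in $J^-(w)$ for one fixed $w$. A timelike lift started at some $\sigma_s$ gives a point of the fibre over $z'$ above $\sigma_s$, but nothing places the later points $\sigma_t$ below it. The natural candidate $J^+(x)\cap F^{-1}(J^-(z))$ is not compact in general. In the example of Remark~\ref{rem: countr} it is not, and there horizontal lifts of null geodesics can leave $M$ before reaching the fibre over $z$. Nor do you say how completeness enters. A Peano solution of $\dot\sigma=\tilde Z_\sigma$ is not shown to be an affinely parametrized geodesic. (One-steepness and achronality of $\gamma$ do force it to be a null pregeodesic, but nothing controls its parameter against an affine one.) So causal geodesic completeness gives no control over where $\sigma_t$ is. Since the continuation genuinely fails without completeness, a valid argument must use completeness essentially, and yours does not.

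In the alternative, the step that needs a real argument is boundedness of the initial data; the paper also just asserts $v_n\to v$. Bounded length does not suffice. In the arclength parametrization of Lemma~\ref{lem: C1 lifting}, the unit initial velocities are unbounded as $\ell_N(F(x),z_n)\to 0$. What must be bounded are the vectors $v_n$ with $\exp_{F(x)}(v_n)=z_n$, and for arbitrary $z_n\to z$ this needs a compactness argument for causal geodesics in $N$.

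The simplest fix is to choose the approximating points yourself:
\begin{itemize}
\item Let $v:=\dot\gamma_0$ for $\gamma$ parametrized on $[0,1]$, fix a future timelike $u\in T_{F(x)}N$, and set $z_n:=\exp_{F(x)}(v+u/n)$. This is defined for large $n$ because the domain of $\exp_{F(x)}$ is open.
\item Then $v_n:=v+u/n$ is future timelike, so $F(x)\ll z_n$ and $z_n\to z$.
\item The lift produced by Lemma~\ref{lem: C1 lifting} is $t\mapsto\exp_x(tw_n)$, with $w_n$ the horizontal lift of $v_n$, so $F(\exp_x(w_n))=z_n$.
\item $w_n\to w$, the horizontal lift of $v$, which is future directed null (the case $z=F(x)$ is trivial).
\end{itemize}
Causal geodesic completeness makes $\exp_x(w)$ defined. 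Continuity of $\exp_x$ and $F$ gives $F(\exp_x(w))=z$, and $x\leq\exp_x(w)$. No subsequence or closedness of $J^+$ is needed.
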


\begin{proof}
    Since $F$ is $1$-steep by Lemma \ref{lem: steep} we get that 
    \[
    F( \mathsf{O}_M^+(x, r) ) \subseteq \mathsf{O}_N^+(F(x), r) \, \qquad \, F( \mathsf{O}_M^-(x, r) ) \subseteq \mathsf{O}_N^- (F(x), r) \, .
    \]
    We are left to show the other inclusion, we just show the one regarding $\mathsf O^+$ since the other one is completely analogous. Let $z \in \mathsf{O}_N^+(F(x), r)$ and suppose that $\ell_N(F(x),z)= \eta \geq r$. Then, since $(N,h)$ is globally hyperbolic, there exists a maximizing causal geodesic $\gamma:[0,1] \rightarrow N$ such that $\gamma_0= F(x)$, $\gamma_1= z$ and 
    \[
    \int_0^1 \| \dot \gamma_t\|_h \, \d t = \eta \geq 0 \, .
    \]
    If $\eta >0$, by Lemma \ref{lem: C1 lifting} there exists a timelike geodesic $\sigma:[0,1] \rightarrow M$ such that $\sigma_0=x$, $F(\sigma_t)= \gamma_t$ and $\dot \sigma _t \in \ker(\d F_{\sigma_t})^{\perp}$ for every $t \in [0,1]$. In particular,
    \[
    \ell_M (x, \sigma_1) \geq \int_0^1 \| \dot \sigma _t \|_g \, \d t= \int_0^1 \|\dot \gamma_t \|_h \, \d t \geq \eta \, .
    \]
    That is to say $\sigma_1 \in \mathsf{O}^+(x,r)$ and so $F(\sigma_1) = \gamma_1 =z \in F( \mathsf O^{+}(x,r))$ and so, the claim follows. 
    
    If $\eta=0$, we consider $z_n \rightarrow z$ such that $F(x) \ll z_n$ for every $n \in \N$. The same construction as before yields the existence of timelike vectors $v_n \in T_{F(x)}N$ such that 
    \[
    \exp^N_{F(x)}(v_n) = z_n, \qquad v_n \rightarrow v \in T_{F(x)}N, \qquad \exp_{F(x)}(v)=z \, .
    \]
    Moreover, by Lemma \ref{lem: C1 lifting} there exists horizontal future directed timelike vectors $w_n \in T_xM$ such that $\d F_x [w_n]=v_n$. Notice that $w_n \rightarrow w$. In particular, $w$ is a future directed causal vector. Define $x_n : = \exp_x(w_n)$, then $F(x_n)= z_n$ holds as in the construction of Lemma \ref{lem: C1 lifting} and, since $(M,g)$ is causal geodesically complete we have $x_n \rightarrow \exp_x(w) \in \mathsf O^{+}(x,r)$ so $F(\exp_x(w))= z$ and so the claim follows.
\end{proof}

\begin{remark} \label{rem: countr}
    It is easy to check that the causal geodesic completeness assumption cannot be dropped in Proposition \ref{prop: sumbetry implies submersion}. Consider the restriction on some open diamond of projection map between Minkowski spaces of different dimensions, for instance
    \[
    F: I_{\R^{1,2}}( (1,0,0), (-1,0,0)) \rightarrow I_{\R^{1,1}}( (1,0), (-1,0)) \, .
    \]
    It is clear that $F$ is a smooth and surjective Lorentzian submersion and the source space is globally hyperbolic (and not causal geodesically complete), but both Lemma \ref{lem: C1 lifting} and Proposition \ref{prop: sumbetry implies submersion} fail. \hfill$\blacksquare$
\end{remark}

\subsection{First properties and the Lifting construction}

Now we are able to give the definition of a Lorentzian submetry. As already anticipated in the introduction, this definition does not require any smoothness and so we just give the definition on general causal sets endowed with a time separation function.

\begin{definition} [Lorentzian submetry] \label{def: lorentzian submeetry}
    Let $(X,\ell_X)$ and $(Y, \ell_Y)$ be causal spaces endowed with a time separation function. A surjective function $F: X \rightarrow Y$ is called 
    \begin{itemize}
        \item[a)] future directed Lorentzian submetry if $F( \mathsf{O}_X^+(x, r) ) = \mathsf{O}_Y^+(F(x), r)$ holds for every $x \in X$ and $r \geq 0$ ,
        \item[b)] past directed Lorentzian submetry if $F( \mathsf{O}_X^-(x, r) ) = \mathsf{O}_Y^-(F(x), r)$ holds for every $x \in X$ and $r \geq 0$.
    \end{itemize} 
    Moreover, a map $F: X \rightarrow Y$ is called a Lorentzian submetry provided it is a future directed and a past directed Lorentzian submetry.
\end{definition}

\begin{remark}
    In the next bullet points we collect some basic properties and examples regarding Lorentzian submetries, see Definition \ref{def: lorentzian submeetry}.

\begin{itemize}
    \item[i)] In general, the notions of future directed submetry and past directed submetry are logically independent concepts even for smooth mappings between spacetimes. For instance, consider the set $A:= \{(0,0)\} \cup \{ (1, x) : x \in [-2,-1] \cup [1,2] \}\subset \R^{1,1}$ and pick the projection map $\mathsf{Pr} : I^+(A) \rightarrow (0, \infty)$ onto the first factor. This is a future directed Lorentzian submetry but not a past directed Lorentzian submetry. 
    \item[ii)] (Changing the time orientation) Let $F: M \rightarrow N$ be a map, where $M$ and $N$ are spacetime with time orientation vector fields $X_M$ and $X_N$ respectively. Then $F$ is a future (resp. past) directed Lorentzian submetry if and only if $F$ is a past (resp. future) directed Lorentzian submetry between $M$ and $N$ with orientations induced by $-X_M$ and $-X_N$.
    \item[iii)] It is easy to check that Lorentzian submetries are always $1$-steep and compositions of future (resp. past) Lorentzian submetries are future (resp. past) Lorentzian submetries.
    \item[iv)] Let $(M,g)$ be a strongly causal spacetime and $x \in M$ be a point. Let $C$ be a cone of future directed timelike vectors in $T_xM$ and consider $\mathcal{C}:= C \cap \{v \in T_xM : \|v\|_g=1\}$. Let $0<s<t< \text{InjRad}_x(\mathcal C)$ and define
    \[
    \Omega^{s,t}_x : = \{ \exp_x ( \xi v) : \xi \in (s,t) , v \in \mathcal{C} \} \, \subset M .
    \]
    Then $F(y):= \ell(x,y)$ defined on $F: \Omega^{s,t}_x \rightarrow (s,t)$ is a smooth Lorentzian submersion.
    \item[v)] A classical feature of submetries between metric spaces is that if the source space is a proper metric space then the target space is proper as well. In the setting of Lorentzian submetries, in general if the source space is globally hyperbolic then the target can be even not strongly causal. For instance, consider the quotient map from Minkowski space into the spacelike cylinder $\mathsf Q: \R^{1,1} \rightarrow S^1 \times \R$. This is easily seen to be a Lorentzian submetry since 
    \[
    \mathsf{Q} ( \mathsf O^+_{\R^{1,1}} (x,r))= \mathsf O^+_{S^1 \times \R} (\mathsf Q(x), r) = S^1 \times \R \, .
    \] \hfill$\blacksquare$
\end{itemize} 
\end{remark}

 The following two lemmas discuss the lifting property of timelike geodesics for a given Lorentzian submetry. We start with the proof of the lifting property in the case of uniquely maximizing geodesics and then we prove the Lifting Theorem in the general case.

 The following lemma is inspired by \cite[Lemma 2.1]{BerGui}.

\begin{lemma} \label{lem: lifting submetries} 
    Let $(M,g)$ be a globally hyperbolic spacetime, $(N,h)$ be a strongly causal spacetime and $F:M \rightarrow N$ be a Lorentzian submetry. Let $x \in N$, $y \in F^{-1}(x)$ and $\gamma :[-a,b] \rightarrow N$ be the unique maximizing timelike geodesic between its endpoints such that $\gamma_0=x$. Then, there exists a unique maximizing timelike geodesic $\sigma: [-a,b] \rightarrow M$ such that $\sigma_0= y$, $F \circ \sigma_t = \gamma_t$ for every $t \in [-a,b]$ and $L_g(\sigma) = L_h(\gamma)$.
\end{lemma}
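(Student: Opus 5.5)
We lift the two endpoints of $\gamma$ separately using the ball-preserving property, glue the lifts at $y$, and then use $1$-steepness together with uniqueness of the maximizer in $N$ to show the result is a single geodesic projecting onto $\gamma$. Set $x_- := \gamma_{-a}$, $x_+ := \gamma_b$, $\eta_- := \ell_N(x_-,x) = a$ and $\eta_+ := \ell_N(x,x_+) = b$, with arclength parametrization understood, so that $L_h(\gamma) = a+b$.

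\textbf{Lifting the endpoints.} Since $F$ is a future directed submetry, $x_+ \in \mathsf{O}^+_N(x,b) = F(\mathsf{O}^+_M(y,b))$. Hence there is $y_+ \geq y$ with $\ell_M(y,y_+) \geq b$ and $F(y_+) = x_+$. Likewise, the past directed property gives $y_- \leq y$ with $\ell_M(y_-,y) \geq a$ and $F(y_-) = x_-$. Note that $1$-steepness (Remark iii)) gives the reverse bounds $\ell_M(y,y_+) \leq \ell_N(x,x_+) = b$, so both are equalities. By the reverse triangle inequality, $\ell_M(y_-,y_+) \geq a+b$, while $1$-steepness gives $\ell_M(y_-,y_+) \leq \ell_N(x_-,x_+) = a+b$, the last equality holding because $\gamma$ is maximizing. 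Therefore $\ell_M(y_-,y_+) = \ell_M(y_-,y)+\ell_M(y,y_+)$. Global hyperbolicity of $M$ gives maximizing timelike geodesics from $y_-$ to $y$ and from $y$ to $y_+$. Their concatenation has length $a+b = \ell_M(y_-,y_+)$, so it is a maximizing timelike geodesic, hence unbroken and smooth. Parametrize it by arclength as $\sigma:[-a,b]\to M$ with $\sigma_0=y$.

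\textbf{Projection.} For $-a \leq s < t \leq b$, maximality gives $\ell_M(\sigma_s,\sigma_t) = t-s$, and $1$-steepness then gives $\ell_N(F\sigma_s, F\sigma_t) \geq t-s$. Fix $t$. Applying this to the pairs $(-a,t)$ and $(t,b)$ gives
\[
a+b = \ell_N(x_-,x_+) \geq \ell_N(x_-,F\sigma_t)+\ell_N(F\sigma_t,x_+) \geq (t+a)+(b-t).
\]
So $F\sigma_t$ lies on a maximizing timelike curve from $x_-$ to $x_+$, obtained by concatenating maximizers through $F\sigma_t$. Since $\gamma$ is the unique such maximizer, $F\sigma_t$ lies on $\gamma$, at $\ell_N$-distance $t+a$ from $x_-$. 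This forces $F\sigma_t = \gamma_t$.

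\textbf{Uniqueness of $\sigma$.} This is the step I expect to be the main obstacle. Suppose $\tilde\sigma$ is another such lift with $\tilde\sigma_0=y$. Take a small $\varepsilon>0$ and a point $z=\tilde\sigma_{\varepsilon}$. Using the past directed property at $z$ for the geodesic $\gamma|_{[-a,\varepsilon]}$, I would run the gluing argument above with $\tilde\sigma$ near $y$ and $\sigma$ elsewhere. The aim is to produce a maximizing curve through $y$ that has a corner unless $\dot{\tilde\sigma}_0 = \dot\sigma_0$, which would contradict smoothness of maximizers. Concretely: $\sigma|_{[-a,0]}$ concatenated with $\tilde\sigma|_{[0,b]}$ projects to $\gamma$. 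By the same steepness chain its length equals $\ell_M(\sigma_{-a},\tilde\sigma_b)$, so it is a maximizer and hence unbroken. This gives $\dot{\tilde\sigma}_0 = \dot\sigma_0$, and then geodesic uniqueness gives $\tilde\sigma=\sigma$. The key point to check is that the length of any lift equals $L_h(\gamma)$ along with $1$-steepness. This holds because $\ell_M(\tilde\sigma_s,\tilde\sigma_t) \leq t-s$ and lifts are assumed maximizing geodesics of that length.
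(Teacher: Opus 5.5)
Your existence argument (lifting $x_\pm$ at $y$ through $\mathsf O^\pm$ and squeezing $\ell_M(y_-,y_+)$ between the reverse triangle inequality and $1$-steepness) is the paper's gluing step. Your uniqueness argument is correct when $a>0$ and $b>0$. There are, however, two gaps.

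\textbf{Uniqueness in the one-sided case.} The statement also covers $a=0$ (or $b=0$). The paper proves that case first, and it is the case used for $\gamma|_{[0,t_1]}$ in Theorem~\ref{thm: lifting}. There your uniqueness proof collapses: $\sigma|_{[-a,0]}$ is a single point, its concatenation with $\tilde\sigma|_{[0,b]}$ is just $\tilde\sigma$, and nothing forces $\dot{\tilde\sigma}_0=\dot\sigma_0$. Your preliminary sketch via $z=\tilde\sigma_\varepsilon$ does not help, since it only yields a segment ending at $z$, not a past segment ending at $y$.

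The missing idea is to go \emph{beyond} the endpoint. Extend $\gamma$ to a geodesic $\alpha:[-\varepsilon,b]\to N$ with $\alpha|_{[-\varepsilon,\varepsilon]}$ maximizing. Use the past-directed property at $y$ to get $w\in F^{-1}(\alpha_{-\varepsilon})$ with $\ell_M(w,y)=\varepsilon$, and a maximizer $\beta$ from $w$ to $y$. Then $2\varepsilon=\ell_N(\alpha_{-\varepsilon},\alpha_\varepsilon)\geq\ell_M(w,\sigma_\varepsilon)\geq\ell_M(w,y)+\ell_M(y,\sigma_\varepsilon)=2\varepsilon$, and the same holds for $\tilde\sigma$. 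So both $\sigma$ and $\tilde\sigma$ continue $\beta$ without a corner and share its velocity at $y$. Only the short piece of $\alpha$ is used. The extended $\alpha$ on $[-\varepsilon,b]$ need not be uniquely maximizing, so you cannot simply reduce to your two-sided case.

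\textbf{The projection step.} You infer $p:=F\sigma_t\in\gamma$ from $\ell_N(x_-,p)+\ell_N(p,x_+)=\ell_N(x_-,x_+)$ by concatenating maximizers through $p$. Since $N$ is only strongly causal, these maximizers need not exist, and the inference is false in general. In the flat cylinder $\R\times S^1$, the points $(0,0)$ and $(T,\pi)$ with $T>\pi$ are joined by two maximizers. Deleting an interior point of one of them gives a strongly causal spacetime in which the other is the unique maximizer, while the remaining points of the punctured one still satisfy the equality.

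You have to use the submetry itself. $F$ is continuous directly from the definition:
\begin{itemize}
\item if $x\ll F(y)\ll z$, the $\mathsf O^\mp$ property with a finite positive radius gives $\tilde x\ll y\ll\tilde z$ with $F(\tilde x)=x$ and $F(\tilde z)=z$;
\item then $y_n\in I(\tilde x,\tilde z)$ eventually, and $1$-steepness gives $F(y_n)\in I(x,z)$;
\item such diamonds form a neighbourhood basis in $N$.
\end{itemize}
Hence $F\circ\sigma$ is a continuous curve with $\ell_N(F\sigma_s,F\sigma_t)=t-s$; your chain plus the reverse triangle inequality gives this equality. One checks, in small causally convex neighbourhoods, that it is a causal curve of $h$-length $a+b$ from $x_-$ to $x_+$. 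It is therefore a maximizer, and hence equals $\gamma$. This is the paper's route: it takes $F\circ\sigma$ itself as the competitor curve, leaving continuity implicit.
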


\begin{proof} 
Without loss of generality we assume $\gamma$ to be future-directed and parametrized by $h$-arclength. Suppose for the moment that $a=0$. Notice that $\gamma_b \in \mathsf O^{+}(\gamma_0, b) = \mathsf O^+(F(y), b) = F(\mathsf O^+(y, b))$ and so there exists $z \in \mathsf O^+(y, b)$ such that $F(z) = \gamma_b$. In particular, thanks to the $1$-steepness of $F$ we get that
    \[
    b= \ell_N( \gamma_0, \gamma_b) =\ell_N( F(y), F(z)) \geq \ell_M (y,z) \geq b \, .
    \]
    It follows that there exists a maximizing timelike geodesic $\sigma: [0,b] \rightarrow M$ parametrized by $g$-arclength connecting $y$ to $z$. Observe that, by the $1$-steepness of $F$, we get that $F\circ \sigma$ is a maximizing geodesic connecting $\gamma_0$ to $\gamma_b$, and so uniqueness forces to have $F \circ \sigma_t= \gamma_t$. 
    
    We are left to show that $\sigma$ is the unique maximizing timelike geodesic starting at $y$ such that $F \circ \sigma_t =\gamma_t$ and $L_g (\sigma)= L_h (\gamma)$. Suppose $\sigma^1, \sigma ^2$ are two of such lifted geodesics parametrized by $g$-arclength. If $\varepsilon >0$ sufficiently small we can extend $\gamma$ into a timelike geodesic $\alpha: [-\varepsilon, b] \rightarrow N$ in such a way that $\alpha_t = \gamma_t$ on $t \in [0,b]$ and $\alpha: [-\varepsilon, \varepsilon] \rightarrow N$ is a maximizing timelike geodesic. Then $\alpha_{-\varepsilon} \in \mathsf O^-(\alpha_0, \varepsilon) = F( \mathsf O^-(y, \varepsilon))$, hence we can find $z \in F^{-1}(\alpha_{\varepsilon})$ and $\beta:[-\varepsilon , 0] \rightarrow M$ maximizing timelike geodesic parametrized by $g$-arclength such that $\beta_0= y$ and $\beta_{-\varepsilon}=z$. Then, using the fact that $F$ is $1$-steep and reverse triangle inequality we get
    \[
    2\varepsilon = \ell_N (\alpha_{-\varepsilon}, \alpha_{\varepsilon}) = \ell_N (F( \beta_{\varepsilon}), F(\sigma^1_{\varepsilon})) \geq \ell_M (\beta_{-\varepsilon}, \sigma^1_{\varepsilon}) \geq \ell_M (\beta_{-\varepsilon},y) + \ell_M(y, \sigma^1_{\varepsilon})=2\varepsilon \, .
    \]
    Hence the concatenation of $\beta$ and $\sigma^1$ restricted on $[0, \varepsilon]$ produces a maximizing timelike geodesic in $M$ between $\beta_{-\varepsilon}$ and $\sigma^1_{\varepsilon}$. But, analogous computations lead to the fact that  the concatenation of $\beta$ and $\sigma^2$ restricted on $[0, \varepsilon]$ produces a maximizing timelike geodesic in $M$ between $\beta_{-\varepsilon}$ and $\sigma^2_{\varepsilon}$. Hence the uniqueness follows by the non-branching property of geodesics. 
    
    The case $b=0$ is completely analogous. Now suppose that $a,b \neq 0$. By the previous part of the proof we can find unique liftings of $\gamma :[-a,0] \rightarrow N$ and $\gamma:[0,b] \rightarrow N$ given by $\sigma :[-a, 0] \rightarrow M$ and $\sigma :[0, b] \rightarrow M$. Gluing these two liftings at $t=0$, we get a continuous lifting $\sigma$ for $\gamma$. Moreover, $\sigma$ is a maximizing timelike geodesic since
    \[
    b+a = \ell_N(\gamma_{-a}, \gamma_b) = \ell_N( F(\sigma_{-a}), F(\sigma_{b})) \geq \ell_M (\sigma_{-a}, \sigma_b) \geq \ell_M(\sigma_{-a}, \sigma_0) + \ell_M(\sigma_0, \sigma_b) = b+a \, ,
    \]
    and uniqueness follows from the already established uniqueness in the case $a=0$.  
\end{proof}

The following Theorem is an improvement of the previous Lemma.

\begin{theorem}[Lifting] \label{thm: lifting}
    Let $(M,g)$ be a globally hyperbolic spacetime, $(N,h)$ be a strongly causal spacetime and $F: M \rightarrow N$ be a Lorentzian submetry. Let $\gamma : [0,1] \rightarrow N$ be a timelike geodesic and fix $y \in F^{-1}(\gamma_0)$, then there exists a unique timelike geodesic $\sigma :[0,1] \rightarrow M$ such that $F \circ \sigma_t= \gamma_t$,  $\sigma_0=y$ and $L_g(\sigma) = L_h(\gamma)$. Moreover, if $\gamma$ is maximizing then $\sigma$ is maximizing as well.
\end{theorem}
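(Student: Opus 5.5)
Throughout, parametrize $\gamma$ by $h$-arclength on $[0,L]$ with $L=L_h(\gamma)$; reparametrizing back to $[0,1]$ is harmless. The plan is to reduce to Lemma \ref{lem: lifting submetries} by cutting $\gamma$ into short pieces that are unique maximizers, lift each piece, and glue. Here "timelike geodesic" means the smooth (locally maximizing) geodesic in $N$.

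\textbf{Step 1: subdivide into unique maximizers.} Since $(N,h)$ is strongly causal, every point has an arbitrarily small causally convex, globally hyperbolic, convex normal neighborhood (\cite[Theorem 2.7]{Min19}). Inside such a neighborhood the local geodesic is the unique maximizer between its endpoints. By compactness of $\gamma([0,L])$ and a Lebesgue number argument, choose a partition $0=t_0<t_1<\dots<t_k=L$ such that each restriction $\gamma|_{[t_{i-1},t_{i+1}]}$ (two consecutive intervals) lies in one such neighborhood. Then each such restriction is the unique maximizing timelike geodesic between its endpoints in $N$.

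\textbf{Step 2: lift inductively.} Apply Lemma \ref{lem: lifting submetries} with base point $y$ to $\gamma|_{[t_0,t_1]}$, obtaining a lift $\sigma^1$. Then apply it with base point $\sigma^1_{t_1}$ to $\gamma|_{[t_1,t_2]}$, and continue. The concatenation $\sigma$ is continuous, satisfies $F\circ\sigma=\gamma$, and has $L_g(\sigma)=L_h(\gamma)$. To see that $\sigma$ is smooth at the junctions, apply the lemma to the two-sided segment $\gamma|_{[t_{i-1},t_{i+1}]}$ based at $\sigma_{t_i}$, taking $a=t_i-t_{i-1}$ and $b=t_{i+1}-t_i$. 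This produces a maximizing geodesic through $\sigma_{t_i}$ whose two halves are lifts of the two pieces. By the uniqueness part of the lemma (the case $a=0$ or $b=0$), these halves coincide with $\sigma|_{[t_{i-1},t_i]}$ and $\sigma|_{[t_i,t_{i+1}]}$. Hence $\sigma$ is maximizing near each $t_i$, so it is an unbroken timelike geodesic. I expect the bookkeeping here to be the main obstacle: the lemma's uniqueness is stated from a base point at parameter $0$, so each junction argument has to be set up with the correct base point. Uniqueness of the global lift then follows the same way, by induction on $i$: any lift agrees with $\sigma$ on $[t_0,t_1]$, hence has the same point at $t_1$, and so on.

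\textbf{Step 3: the maximizing case.} If $\gamma$ is maximizing, then by $1$-steepness of $F$ and the reverse triangle inequality,
\[
L_h(\gamma)=\ell_N(\gamma_0,\gamma_L)=\ell_N(F(\sigma_0),F(\sigma_L))\geq \ell_M(\sigma_0,\sigma_L)\geq L_g(\sigma)=L_h(\gamma).
\]
Equality holds throughout, so $\sigma$ is maximizing.
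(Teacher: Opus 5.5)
Your proposal is correct and follows essentially the paper's proof: the same partition into overlapping uniquely maximizing pieces, iterative lifting via Lemma \ref{lem: lifting submetries}, and smooth gluing through its one-sided uniqueness statements. For uniqueness of the global lift, note that the $1$-steepness squeeze from your Step 3, applied piece by piece and combined with equality of total lengths, is what makes each piece of a competing lift a maximizing lift of the right length, so that the lemma applies. The only real deviation is Step 3, where your direct squeeze $L_h(\gamma)\geq\ell_M(\sigma_0,\sigma_L)\geq L_g(\sigma)=L_h(\gamma)$ is a cleaner replacement for the paper's argument, which lifts the uniquely maximizing restriction $\gamma|_{[\varepsilon,1]}$ and lets $\varepsilon\to 0$.
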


\begin{proof}
    Consider a partition $0=t_0 < t_1< \dots t_N =1$ of the interval $[0,1]$ in such a way that $\gamma$ is the unique maximizing timelike geodesic when restricted on $[t_{i-1}, t_{i+1}]$ for every $i=1, \dots N-1$. Lemma \ref{lem: lifting submetries} shows that there exists a unique lifted timelike maximizing geodesic $\sigma:[0,t_1] \rightarrow M$ of $\gamma :[0,t_1] \rightarrow N$ such that $\sigma_0=y$. Now, we can iteratively lift the timelike geodesics $\gamma: [t_{i-1}, t_{i+1}] \rightarrow N$ fixing $\sigma_{t_i} \in F^{-1}(\gamma_{t_i})$ for $i=1, \dots N-1$. Notice that the uniqueness part of Lemma \ref{lem: lifting submetries} implies that these lifted geodesics $\sigma: [t_{i-1}, t_{i+1}] \rightarrow M$ glue in a smooth and unique way since are locally maximizing. Hence, this concludes the first part of the proof. Assume that $\gamma$ is maximizing, then for $\varepsilon>0$ sufficiently small $\gamma: [\varepsilon, 1] \rightarrow M$ is uniquely maximizing. Then, choose $N=1$ and $t_1= \varepsilon$ in the previous partition; this yields that $\sigma:[\varepsilon, 1] \rightarrow M$ is maximizing by Lemma \ref{lem: lifting submetries}. The conclusion follows by letting $\varepsilon \rightarrow 0$.
\end{proof}

Under some completeness assumptions, the Lifting Lemma \ref{lem: lifting submetries} implies that submetries are continuous maps between spacetimes and the fibers are closed and acausal.

\begin{proposition} \label{prop: continuity}
    Let $(M,g)$ be a globally hyperbolic spacetime, $(N,h)$ be a strongly causal spacetime and $F:M \rightarrow N$ be a Lorentzian submetry. Then $F$ is continuous and, for every $x \in N$, the fiber $F^{-1}(x)\subset M$ is closed and acausal. 
\end{proposition}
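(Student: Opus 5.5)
I would prove the three claims in the order acausality, continuity, closedness, since the last follows at once from the second.

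\emph{Acausality.} Let $y_0,y_1\in F^{-1}(x)$ with $y_0\leq y_1$ and $y_0\neq y_1$.
\begin{itemize}
\item If $y_0\ll y_1$, then $1$-steepness (remark iii) gives $\ell_N(x,x)\geq \ell_M(y_0,y_1)>0$. This contradicts strong causality of $N$, which forbids $x\ll x$.
\item If $y_0\leq y_1$ only, I would use the submetry property in both directions. Take any $z\gg x$ close to $x$ (strong causality gives $\ell_N(x,z)=\eta>0$) and lift the past-directed geodesic from $z$ to $x$. Concretely, pick the unique maximizing timelike geodesic $\gamma$ from $x$ to $z$ inside a small convex neighbourhood, and lift it by Lemma \ref{lem: lifting submetries} starting at $y_1$, obtaining $\sigma$ with endpoint $w$, $F(w)=z$ and $\ell_M(y_1,w)=\eta$.
\item Then $y_0\leq y_1\ll w$. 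Since $y_0\neq y_1$ and $y_0\leq y_1$ is not along the maximizing timelike geodesic $\sigma$ (we are in the null case, so a corner occurs), the reverse triangle inequality is strict: $\ell_M(y_0,w)>\eta$.
\item $1$-steepness then gives $\ell_N(x,z)\geq \ell_M(y_0,w)>\eta=\ell_N(x,z)$, a contradiction.
\end{itemize}
The main subtle point is justifying strictness of the reverse triangle inequality. In the genuinely null case a broken causal curve (null segment followed by a timelike segment) can be deformed to a longer timelike one. This is standard in smooth globally hyperbolic spacetimes unless $y_0$ lies on a maximizing timelike geodesic through $y_1$, which is impossible here since $y_0,y_1$ are null related.

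\emph{Continuity.} Let $y_n\to y$ in $M$ and set $x=F(y)$. Strong causality of $N$ gives a basis of neighbourhoods of $x$ of the form $I^+(p)\cap I^-(q)$ with $p\ll x\ll q$. So it suffices to show $F(y_n)\in I^+(p)\cap I^-(q)$ for $n$ large.
\begin{itemize}
\item Since $p\in \mathsf O^-_N(x,\ell_N(p,x))=F(\mathsf O^-_M(y,\ell_N(p,x)))$, there is $\tilde p\ll y$ in $M$ with $F(\tilde p)=p$.
\item Similarly there is $\tilde q\gg y$ with $F(\tilde q)=q$.
\item Since $I^+(\tilde p)\cap I^-(\tilde q)$ is open and contains $y$, for large $n$ we have $\tilde p\ll y_n\ll\tilde q$.
\item By $1$-steepness, $\ell_N(p,F(y_n))\geq \ell_M(\tilde p,y_n)>0$, so $p\ll F(y_n)$; likewise $F(y_n)\ll q$.
\end{itemize}
Hence $F(y_n)\to x$. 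This step uses only openness of chronological relations in $M$ and steepness, not the lifting lemma.

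\emph{Closedness.} Fibers are closed because $F$ is continuous and points of $N$ are closed.
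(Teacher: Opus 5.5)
Your proposal is correct and follows essentially the paper's proof. For continuity you pull a diamond $I^+(p)\cap I^-(q)$ back to $\tilde p\ll y\ll\tilde q$ and push $y_n$ forward with $1$-steepness; you get $\tilde p,\tilde q$ directly from the outer-ball identities rather than from Lemma \ref{lem: lifting submetries}, which amounts to the same step. For acausality you lift a short timelike geodesic at the later point and play the strict reverse triangle inequality against $1$-steepness, exactly as the paper does. Your separate treatment of $y_0\ll y_1$ is harmless but unnecessary, since the second argument already covers it. Your justification of strictness in the null case fills in a detail the paper leaves implicit.
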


\begin{proof}
    We start by showing the continuity of $F$. Let $y_n \rightarrow y$ in $M$, we claim that $F(y_n) \rightarrow F(y)$ in $N$. Since $N$ is strongly causal, it is sufficient to check that for every geodesically convex normal neighborhood $U:= I(x,z)$ such that $x \ll F(y) \ll z$ with $\diam_h(U) \leq a$ then there exist $\overline{n} \in \N$ such that $x \ll F(y_n) \ll z$ for any $n \geq \overline{n}$. Using Lemma \ref{lem: lifting submetries} we can connect $x$ to $F(y)$ and $F(y)$ to $z$ via unique maximizing timelike geodesics and lift them in order to get that $\tilde{x} \ll y \ll \tilde{z}$ where $F(\tilde{x}) = x$ and $F(\tilde{z})=z$. Since $y_n \rightarrow y$ then there exists $\overline{n}$ such that  $\tilde{x} \ll y_n \ll \tilde{z}$ for every $n \geq \overline{n}$. But now, using the $1-$steepness of $F$ we get $x \ll F(y_n) \ll z$ for any $n \geq \overline{n}$. 
    
    Now we show that the fibers are acausal, the closedness is an easy consequence of the continuity of $F$. Let $y \neq z \in F^{-1}(x)$ and suppose by contradiction that $y\leq z$ in $M$. Let $\overline{x} \in I^+_N(x)$ such that there exists a unique maximizing timelike geodesic $\gamma :[0, \varepsilon ] \rightarrow N$ parametrized by $h$-arclength connecting $x$ to $\overline{x}$. By Lemma \ref{lem: lifting submetries} , there are two liftings $\sigma$ and $\eta$ of $\gamma$ respectively based at $y= \sigma_0$ and $z = \eta_0$. It is clear that $\sigma_1, \eta_1 \in F^{-1}(\overline{x})$ and $y \leq z \ll \eta_1$, hence $\ell_M (y, \eta_1)> \varepsilon$ violating the $1$-steepness of $F$.
\end{proof}

We conclude this section with an application of Lemma \ref{lem: lifting submetries} that will be used in the main theorem of section \ref{sub: regularity}.

\begin{lemma} [Restriction] \label{lem: restriction}
    Let $(M,g)$ be a globally hyperbolic spacetime, $(N,h)$ be a strongly causal spacetime and $F: M \rightarrow N$ be a Lorentzian submetry. Let $U \subset N$ be a causally convex normal neighborhood then $F^{-1}(U) \subset M$ is a causally convex neighborhood and $F: F^{-1}(U) \rightarrow U$ is a Lorentzian submetry.
\end{lemma}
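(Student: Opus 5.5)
We prove three things in turn: $F^{-1}(U)$ is a neighborhood, it is causally convex, and the restricted map is a Lorentzian submetry.

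\emph{Neighborhood and causal convexity.} Since $U$ is open and $F$ is continuous by Proposition \ref{prop: continuity}, $F^{-1}(U)$ is open, so it is a neighborhood of each of its points. For causal convexity, let $\sigma:[0,1]\rightarrow M$ be a future directed causal curve with $\sigma_0,\sigma_1\in F^{-1}(U)$ and fix $t\in[0,1]$. Then $\sigma_0\leq\sigma_t\leq\sigma_1$ in $M$. By $1$-steepness of $F$ (which in particular preserves $\leq$, since $\mathsf O^+_M(x,0)=J^+(x)$ is mapped onto $J^+(F(x))$), we get $F(\sigma_0)\leq F(\sigma_t)\leq F(\sigma_1)$ in $N$. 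Because $F(\sigma_0),F(\sigma_1)\in U$ and $U$ is causally convex, we need $F(\sigma_t)\in U$. Strictly, causal convexity is phrased with causal curves, whereas here we only know the relation $\leq$. This gap is closed because $\leq$ in $N$ is defined through piecewise $C^1$ causal curves: a curve realizing $F(\sigma_0)\leq F(\sigma_t)$, concatenated with one realizing $F(\sigma_t)\leq F(\sigma_1)$, is a causal curve between points of $U$ passing through $F(\sigma_t)$. Hence $F(\sigma_t)\in U$.

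\emph{Submetry property.} Write $\tilde\ell$ and $\tilde\ell_U$ for the time separations of $F^{-1}(U)$ and $U$ viewed as spacetimes. Causal convexity ensures that every causal curve between points of these sets stays inside them, so $\tilde\ell=\ell_M$ and $\tilde\ell_U=\ell_N$ on related pairs, and the causal relations are also the restricted ones. Consequently $\mathsf O^\pm_{F^{-1}(U)}(y,r)=\mathsf O^\pm_M(y,r)\cap F^{-1}(U)$ and $\mathsf O^\pm_U(x,r)=\mathsf O^\pm_N(x,r)\cap U$. The inclusion $F(\mathsf O^+_{F^{-1}(U)}(y,r))\subseteq \mathsf O^+_U(F(y),r)$ follows from $1$-steepness. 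Surjectivity of the restriction onto $U$ is inherited from $F$.

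For the reverse inclusion, take $z\in U$ with $z\in\mathsf O^+_N(F(y),r)$. In the timelike case, since $U$ is a causally convex normal neighborhood, there is a unique maximizing timelike geodesic $\gamma$ in $U$ from $F(y)$ to $z$. We expect this to be the main obstacle: uniqueness must hold in $N$, not just within $U$. It does, because any maximizing curve in $N$ between points of $U$ lies in $U$ by causal convexity, and in a normal neighborhood geodesics are unique. Lemma \ref{lem: lifting submetries} then gives a lift $\sigma$ starting at $y$ with $L_g(\sigma)=L_h(\gamma)=\ell_N(F(y),z)\geq r$. Since $F\circ\sigma=\gamma\subset U$, the lift satisfies $\sigma\subset F^{-1}(U)$, so $\sigma_1\in\mathsf O^+_{F^{-1}(U)}(y,r)$ and $F(\sigma_1)=z$.

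In the case $r=0$ with $z$ only causally related to $F(y)$ (null or equal), we first use the submetry property in $M$ to pick $w\in J^+_M(y)$ with $F(w)=z$. We then take a maximizing causal geodesic $\beta$ from $y$ to $w$ in $M$, which exists by global hyperbolicity. The image $F\circ\beta$ is causal in the sense of $\leq$, since $F$ preserves $\leq$ along the curve. Both endpoints $F(y)$ and $z$ lie in $U$, so the argument of the first step puts every $F(\beta_t)$ in $U$. Hence $\beta\subset F^{-1}(U)$ and $w\in J^+_{F^{-1}(U)}(y)$. This same argument also covers the timelike case, so lifting is only a convenience there. The past directed statement is symmetric by reversing time orientation.
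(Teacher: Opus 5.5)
Your proof is correct and follows the paper's argument. Causal convexity of $F^{-1}(U)$ comes from $F$ preserving $\leq$ together with causal convexity of $U$. This gives $\mathsf O^{\pm}_{F^{-1}(U)}(y,r)=\mathsf O^{\pm}_M(y,r)\cap F^{-1}(U)$, the easy inclusion follows from $1$-steepness, and the reverse inclusion from the preimage point supplied by the global submetry property of $F$.

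The lifting detour and the auxiliary geodesic $\beta$ are unnecessary: any $w\in\mathsf O^+_M(y,r)$ with $F(w)=z\in U$ already lies in $\mathsf O^+_{F^{-1}(U)}(y,r)$ by the causal convexity you proved. Your concatenation argument is a more careful justification than the paper's assertion that $F\circ\sigma$ is itself an $h$-causal curve, which is not immediate since at this stage $F$ is only known to be continuous.
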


\begin{proof}
    Let $x,y \in F^{-1}(U)$ such that $x\leq y$ in $M$. Let $\sigma:[0,1] \rightarrow M$ be a $g-$causal curve such that $\gamma_0=x$ and $\gamma_1=y$. Since $F$ is $1$-steep, it follows that $\gamma_t:=F\circ \sigma_t$ is a $h$-causal curve in $N$ connecting $F(x), F(y) \in U$. In particular, $\gamma_t \in U$ for every $t \in [0,1]$. This shows that $\sigma_t \in F^{-1}(U)$ for every $t \in [0,1]$. Hence $F^{-1}(U) \subset M$ is causally convex. We are left to show that $F: F^{-1}(U) \rightarrow U$ is a Lorentzian submetry. Given $z \in F^{-1}(U)$ and $r\geq 0$, we have that
    \[
    F\left ( \mathsf O^{+}_{F^{-1}(U)} (z,r) \right ) = F\left ( \mathsf O^{+}_{M} (z,r) \cap F^{-1}(U) \right ) \subseteq \mathsf
    O^+_N(F(z),r) \cap U = \mathsf O^+_U(F(z),r) \, .
    \]
    Vice versa, if $x \in \mathsf O^+(F(z),r) \cap U= F( \mathsf O^+(z,r)) \cap U$ there exists $\overline{x} \in \mathsf O^+(z,r)$ such that $F( \overline{x})=x \in U$, hence 
    $\overline{x}\in \mathsf O^{+}_{F^{-1}(U)} (z,r)$ and so, the conclusion follows.
\end{proof}

\subsection{A regularity theorem} \label{sub: regularity}

The following theorem is the main result of this subsection and will be crucial for the applications. It studies the optimal regularity of Lorentzian submetries between smooth spacetimes. The proof is inspired by \cite[Theorem A]{BerGui}.

\begin{theorem} \label{thm: regularity}
    Let $(M,g)$ be a globally hyperbolic spacetime and $(N,h)$ be strongly causal spacetime. Let $F: M \rightarrow N$ be a Lorentzian submetry. Then $F$ is a $C^{1,1}_{loc}$ Lorentzian submersion.
\end{theorem}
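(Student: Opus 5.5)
The plan follows the Berestovskii--Guijarro strategy and localizes first. Fix $p \in M$ and choose a causally convex, geodesically convex normal neighborhood $U$ of $F(p)$ in $N$; by Lemma \ref{lem: restriction}, $F: F^{-1}(U) \to U$ is again a Lorentzian submetry, and $F$ is continuous by Proposition \ref{prop: continuity}. Inside $U$ fix two points $q^- \ll F(p) \ll q^+$. These points are joined to $F(p)$ by unique maximizing timelike geodesics.

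\emph{Key idea: express time separations to fibers.} By the Lifting Theorem \ref{thm: lifting}, $1$-steepness, and the submetry property, for $y$ near $p$ we get
\[
\ell_N(q^-, F(y)) = \ell_M\bigl(F^{-1}(q^-), y\bigr), \qquad \ell_N(F(y), q^+) = \ell_M\bigl(y, F^{-1}(q^+)\bigr).
\]
The inequality $\geq$ comes from steepness. For $\leq$, take the maximizing geodesic from $q^-$ to $F(y)$ and lift it backwards from $y$; the past version of Lemma \ref{lem: lifting submetries} gives a point of $F^{-1}(q^-)$ realizing the value.

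Now compose with the chart $\phi = (\ell_N(q^-_1,\cdot), \dots, \ell_N(q^-_n,\cdot))$ built from $n = \dim N$ points $q^-_i \ll F(p)$ in general position. Near $F(p)$ the functions $\ell_N(q^-_i,\cdot)$ are smooth with linearly independent gradients, so $\phi$ is a local diffeomorphism. The components of $\phi\circ F$ are the functions $f_i(y) = \ell_M(A_i, y)$, where $A_i = F^{-1}(q^-_i)$ is closed and acausal.

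\emph{Semiconcavity and semiconvexity.} The function $f_i$ is a supremum over $a \in A_i$ of the functions $\ell_M(a,\cdot)$. Near $p$, each of these is smooth and has uniformly bounded Hessian. The relevant points $a$ stay in a compact set, by global hyperbolicity and the lifted geodesics being maximizing and timelike, so lying away from the cut locus. It follows that $f_i$ is locally semiconvex; the sign conventions will be checked. Symmetrically, using the future fiber we have $\ell_N(F(y), q^+_i) = \ell_M(y, F^{-1}(q^+_i))$. The key relation $\ell_N(q^-_i, q^+_i) = \ell_N(q^-_i, F(y)) + \ell_N(F(y), q^+_i)$ holds for $y$ with $F(y)$ on the chosen geodesic. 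For general $y$ we instead take $q^+_i$ on the extension of the maximizing geodesic from $q^-_i$ through $F(y)$, as in the Riemannian proof. This touching-from-both-sides argument should show that $f_i$ is also locally semiconcave. A function that is both locally semiconvex and semiconcave is $C^{1,1}_{loc}$. Hence $F = \phi^{-1}\circ(f_1,\dots,f_n)$ is $C^{1,1}_{loc}$.

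\emph{Submersion property.} At $y$, the gradient of $f_i$ is the unit timelike initial velocity of the lifted maximizing geodesic, which by Theorem \ref{thm: lifting} projects onto the corresponding geodesic of $N$. Thus $\d F_y$ maps the span $H_y$ of these horizontal lifted directions isometrically, and time-orientation preservingly, onto $T_{F(y)}N$; this uses the equality $L_g(\sigma)=L_h(\gamma)$ for lifts in all timelike directions. Every $v \in \ker \d F_y$ must be orthogonal to $H_y$. Indeed, by $1$-steepness and a first-order expansion of $\ell$ along $v$, there is no gain of time separation toward the fiber. Dimension counting then gives $\ker(\d F_y)^\perp = H_y$.

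\emph{Main obstacle.} The hardest step is the two-sided (semiconcave) estimate. The plan for it is to use an upper support: take $q^+$ on the extension of the geodesic from $q^-$ through $F(y)$, so that the lower support of $\ell_N(\cdot,q^+)\circ F$ becomes an upper support of $f_i$. Uniform control of Hessians of $\ell_M$ near the compactly many relevant fiber points needs care, to keep the footpoints off the timelike cut locus; shrinking $U$ should handle this.
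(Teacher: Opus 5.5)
Your proposal is correct and follows essentially the same route as the paper. Your $f_i=\ell_N(q_i^-,\cdot)\circ F$ are the paper's reduced real-valued submetries $F_j=L_j\circ F$, and your supports $\ell_M(a_y,\cdot)$ (from below) and $\ell_N(q_i^-,q^+)-\ell_M(\cdot,b_y)$ (from above), built from the lift through $y$ of the radial geodesic from $q_i^-$, play the role of the paper's $F^{\mp}_y$ built from $h_{\mp r}(y)$; the paper then concludes $C^{1,1}_{loc}$ from semiconvexity/semiconcavity (McCann and Cannarsa--Sinestrari), as you do. The one point you leave implicit is that $a_y,b_y$ depend continuously on $y$, which is what keeps them in a compact set and makes the support constants uniform; this is exactly the paper's Step 2, done with the limit curve theorem and the acausality of the fibers.
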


\begin{proof}
     Notice first that $F$ is continuous thanks to Proposition \ref{prop: continuity}, so we need to prove that $F$ is differentiable and its differential is locally Lipschitz with respect to some background Riemannian metrics. We divide the proof in several steps. \\
     \textbf{Step 1} [Dimensional reduction] Let $x \in N$, since $(N,h)$ is strongly causal we can construct a causally convex normal neighborhood $U$ of $F(x)$ and $C^{\infty}$ local coordinates on $U$ given by
    \[
    \Phi : U \ni y \mapsto (\ell_N(x_1,y), \ell_N(x_2,y) , \dots \ell_N(x_n,y)) \in \R^n
    \]
    where $x_1, \dots x_n \in I_N^-(U)$. This construction can be done with a simple application of the Implicit Function Theorem. Moreover, for each $j =1, \dots n$ we can assume that
    \[
    L_j (\cdot ) : = \ell_N(x_j, \cdot) : U \rightarrow (\inf_U L_j, \sup_U L_j) : = (a_j,b_j)
    \]
    is a smooth Lorentzian submetry. In particular, by Lemma \ref{lem: restriction}, we get that $F_j : = F \circ L_j : F^{-1}(U) \rightarrow (a_j, b_j)$ is a Lorentzian submetry with the same regularity of $F$. From now on, we can assume without loss of generality that $N$ is an open interval $(a,b)$ and $M= F^{-1}(a,b)$.
    Suppose that $F(x) = s \in \R$. Using Lemma \ref{lem: lifting submetries} with some $r>0$ sufficiently small, we can construct a lifing $\sigma$ of $[s-r, s+r] \subset N$ such that $\sigma_0=x$ and $F(\sigma_t) = s+t$ for any $t \in [-r,r]$. We define $h_r(x) = \sigma_r$ and $h_{-r}(x) = \sigma_{-r}$, notice that this procedure well defines the maps $h_r$ and $h_{-r}$ on $W:=F^{-1}(a+r, b-r)$. Notice that $h_{-r} \circ h_r= \text{Id}$ holds because of the uniqueness part of Lemma \ref{lem: lifting submetries}. For any $y \in W$, we call 
    \[
    \sigma^r_y :[-r,r] \rightarrow M
    \]
    the unique maximizing timelike geodesic parametrized by $g$-arclength from $h_{-r}(y)$ to $h_r(y)$. \\
    \textbf{Step 2} [The maps $h_r$, $h_{-r}$ are continuous] We show the argument for $h_r$ since the other is analogous. Let $y_n \rightarrow y$ in $W$ and suppose that $F(y_n):= t_n$ and $F(y)=t$, so $t_n \rightarrow t$. Moreover, by construction we have $F(\gamma^n_s) =s+t_n$. Let $\tilde{g}$ be a complete Riemannian metric on $M$ and let $\gamma^n: [0,a_n] \rightarrow M$ be the $\tilde g$-arclength reparameterization of $\sigma_{y_n}^r : [0, r] \rightarrow M$.
    
    By the Limit Curve Theorem \cite[Theorem 2.51]{Min19}, we have that that $\gamma^n$ converges locally uniformly to some continuous causal curve $\gamma$ up to subsequences. 
    Since the fibers of $F$ are acausal by Proposition \ref{prop: continuity}, the curve $\gamma$ necessary leaves the fiber where $y$ belong and so $\gamma _\varepsilon \in F^{-1}(\eta)$ for some $t+r >\eta>t$ and $\varepsilon>0$. For every $n$, denote by $\varepsilon_n \in [0,a_n]$ the unique value such that $F(\gamma^n_{\varepsilon_n})= \eta$. Using a contradiction argument, it is easy to show that $\varepsilon_n \rightarrow \varepsilon$ and so $(\sigma_{y_n}^r)_{\eta-t_n}= \gamma^n _{\varepsilon_n} \rightarrow \gamma_{\varepsilon}$.
    This is sufficient to conclude that $\ell(\gamma_0, \gamma_{\varepsilon})=\eta-t >0$ and that $\gamma$ is timelike. Hence, $\gamma_{\varepsilon}$ is the endpoint of the unique lifting of $[t,\eta]$ based at $y$ and so, by continuity of the exponential map we get also that $h_r(y_n) \rightarrow h_r(y)$. 
    
    In particular, we define the following continuous timelike vector field on $X$ on $W \subset M$ by
    \[
    X_y : = \frac{\log_y( h_r(y))}{r} \, .
    \]
    \textbf{Step 3} [The map $F$ is $C^1$ Lorentzian submersion] To show that $F$ is $C^1$, it is sufficient to show that, for every point $y \in W$, there are two smooth functions $F^+_y, F^-_y$ defined on a neighborhood of $y$ such that:
    \begin{itemize}
        \item[i)] $F^-_y \leq F \leq F^+_y$ holds in a neighborhood of $y$ and equality holds only along $\sigma^r_y$, 
        \item[ii)] $\left (\nabla F^+_y \right )_y = \left (\nabla F^-_y \right )_y = X_y$ .
    \end{itemize}
    Indeed, from ii) we infer that $F^+_y, F^-_y$ share the same partial derivatives at $y$ and from i) we conclude that $F$ is differentiable at $y$ with
    \begin{equation} \label{eq: what is the gradient}
        (\nabla F)_y = X_y \, .
    \end{equation}
    The continuity of $X$ and the arbitrariness of $y \in W$ shows that $F$ is $C^1$ on $W$. Let $y \in W$ and suppose $F(y) = t$, we define
    \begin{equation}
        F^-_y (z) : = F(y) +r - \ell( h_{-r}(y), z) \, , \qquad F^+_y (z) : = F(y) -r + \ell( z, h_{r}(y)) \,.
    \end{equation}
    It is clear from the definition that $F^+_y$ and $F^-_y$ are smooth functions in a neighborhood $V$ of $y$ and that ii) holds. Let $z \in V \subset  I(h_{-r}(y),h_{r}(y))$ and suppose that $\ell(h_{-r}(y),z) : = s \in (0,2r)$. Let $z'$ be the unique point along $\sigma^r_y$ such that $\ell( h_{-r}(y), z) = s$. Then, $F(z') = t-r+s$ by construction and, since $F$ is $1$-steep, we have $F(z) \geq t-r+s$. In particular
    \[
    F(z) \geq F(z') = F(y) +r -s = F(y) +r - \ell( h_{-r}(y), z) = F^-_y(z)  \, ,
    \]
    where equality is attained if and only if $z=z'$. The other inequality and respective rigidity for $F^+_y$ is analogous. Hence, we have proved that $F \in C^1$ on $M$. 
    
    We are left to show that $F$ is a Lorentzian submersion. Fix some $y \in M$ and timelike vector $v \in T_{F(y)}N$ such that $\|v\|_h = 1$. Then, by Lemma \ref{thm: lifting} we can just lift the timelike geodesic $\exp^N_{F(y)}(sv)$ obtaining a uniquely defined timelike geodesic of the form $\exp^M_y(sw)$ on $M$ where $\|w\|_g=1$. The identity in \eqref{eq: what is the gradient}, together with the uniqueness of the lifting show that $\d F_y [w]= v$ and $w \in \ker (\d F_x)^{\perp}$, hence the claim follows by linearity.\\
    \textbf{Step 4} [The map $F$ is locally $C^{1,1}$] Fix a background Riemannian  metric $\tilde{g}$ on $M$. Let $y \in W$ and consider a sufficiently small precompact neighborhood $y \in K \subset W$. By \cite[Theorem 3.6]{McCann}, there exists a constant $C=C(K) \in \R$ such that $F^+_z$ is $C$-semiconvex on $K$ and $F^-_z$ is $C$-semiconcave on $K$ with respect to $\tilde{g}$ for every $z \in K$. In particular, by i) and \cite[Corollary 3.3.8]{cannarsa-sinestrari} we get that $F$ is $C^{1,1}$ when it is restricted to the support of $\sigma^r_z$ in $K$ together with a uniform Lipschitz estimate of the derivative. In particular, the vector field $X$ is locally Lipschitz on $K$ and the conclusion follows by the identity in \eqref{eq: what is the gradient}.
\end{proof}

In the following proposition we study Lorentzian submetries where the target space is the real line $\R$. 

\begin{corollary}\label{cor: unit gradient}
    Let $(M,g)$ be a globally hyperbolic spacetime and $F: M \rightarrow \R$ be a map. If $F$ is a Lorentzian submetry then it is $C^{1,1}_{loc}$ and the gradient $\nabla F$ is complete with unit modulus. Conversely, if $F$ is $C^1$ and $\nabla F$ is a complete vector field such that $\| \nabla F\|_g=1$, then $F$ is a Lorentzian submetry.
\end{corollary}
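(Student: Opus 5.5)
For the direct implication I will apply Theorem \ref{thm: regularity} with $N=\R$, endowed with the Lorentzian metric $h=\d t^2$. This one-dimensional spacetime is strongly causal, with $\ell_\R(s,t)=(t-s)^+$. Hence $F$ is a $C^{1,1}_{loc}$ Lorentzian submersion. Since $T_{F(y)}\R$ is one-dimensional and timelike, $\ker(\d F_y)^\perp$ is spanned by a unit timelike vector, which is $\nabla F$ up to sign. The isometry condition gives $\|\nabla F\|_g=1$, and preservation of the time orientation shows that $\nabla F$ is future directed. This is consistent with \eqref{eq: what is the gradient}, where $(\nabla F)_y=X_y$.

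Completeness of $\nabla F$ will come from the lifting. Given $y$, the timelike line $t\mapsto F(y)+t$ in $\R$ lifts, by Theorem \ref{thm: lifting} applied on $[-T,T]$ for every $T$, to a maximizing timelike geodesic $\sigma$ with $\sigma_0=y$, $F(\sigma_t)=F(y)+t$ and unit speed. I will check that $\dot\sigma_t=(\nabla F)_{\sigma_t}$. This holds because $\sigma$ coincides locally with the curves $\sigma^r$ of Step 1, whose velocity equals $X$. So $\sigma$ is the integral curve of $\nabla F$ through $y$, defined on all of $\R$.

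For the converse, assume $F\in C^1$ with $\nabla F$ complete, unit, and (after fixing orientation) future directed timelike. Lemma \ref{lem: steep} requires a Lorentzian submersion, and in dimension one this reduces to the condition that $\nabla F$ be unit timelike, so $F$ is $1$-steep. Direct check: along a future causal curve $\gamma$, the reverse Cauchy--Schwarz inequality gives $\tfrac{\d}{\d t}F(\gamma_t)=g(\nabla F,\dot\gamma)\ge\|\dot\gamma\|_g$. This yields $F(\mathsf O^\pm(x,r))\subseteq \mathsf O^\pm(F(x),r)$. For the reverse inclusion, let $s\ge F(x)+r$ and let $\phi$ be the flow of $\nabla F$, which is global by completeness. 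The integral curve $\phi_t(x)$ satisfies $F(\phi_t(x))=F(x)+t$ and has unit length, so $\phi_{s-F(x)}(x)\in \mathsf O^+(x,r)$ and is mapped to $s$. The past case is symmetric, using $t<0$. Surjectivity follows from the same flow.

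The main obstacle is the identification of the lifted geodesic with the integral curve of $\nabla F$ in the direct implication. It needs the uniqueness of the lifting together with the fact that the gradient along lifted geodesics equals their velocity, as in \eqref{eq: what is the gradient}. One subtlety there: $\nabla F$ is only Lipschitz, so integral curves are unique and completeness is well posed.
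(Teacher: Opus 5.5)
Your proposal is correct and follows the paper's proof: in the direct part you get $\nabla F = X$ with unit modulus from Theorem \ref{thm: regularity} and identity \eqref{eq: what is the gradient}, and completeness from identifying the integral curves of $\nabla F$ with the lifts of $\R$; in the converse you combine $1$-steepness with global integral curves of $\nabla F$ to obtain the reverse inclusions. Your remark that local Lipschitz continuity of $\nabla F$ makes integral curves unique, so that completeness is well posed, is a useful precision the paper leaves implicit; in the converse, where $\nabla F$ is only continuous, speak of a global integral curve through $x$ (Peano plus completeness) rather than ``the flow''.
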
 

\begin{proof}
    Without loss of generality we assume that $\nabla F$ is future directed. Suppose that $F$ is a Lorentzian submetry. Then, following the lines of the proof of Theorem \ref{thm: regularity} we get that $F$ is a $C^{1,1}_{loc}$ Lorentzian submersion and from the identity in \eqref{eq: what is the gradient} we get that $\nabla F$ correspond to the lift of the vector field $\partial_t$ in $\R$. This shows that $\nabla F$ is timelike future directed with unit modulus. Moreover, using Lemma \ref{lem: lifting submetries}, it is easy to check that the gradient flow curves of $F$ coincides with the liftings of $\R$, hence $\nabla F$ is complete. Conversely, if $F$ is a $C^1$ function and $\nabla F$ is complete with unit modulus we claim that
    \[
    F (\mathsf O^+(x,r)) = [F(x)+r, + \infty ) \, , \qquad F (\mathsf O^-(x,r)) = (- \infty , F(x)-r] \, 
    \]
    holds for any choice of $x \in M$ and $r \geq 0$. It is easy to check that $F$ is $1$-steep and so the inclusions $\subseteq$ are trivial. We are left to show the other inclusion. Fix $x \in M$, by Peano's Theorem there exists a solution of 
    \[
    \begin{cases}
        \dot \gamma_t = \nabla F_{\gamma_t} \\
        \gamma_0=x \, .
    \end{cases}
    \]
    Moreover, $\gamma_t$ is well defined for every $t \in \R$ since $\nabla F$ is assumed to be complete and $\gamma$ is a timelike $C^1$ curve satisfying
    \[
    \dfrac{\d}{\d s} \bigg |_{s=t}F(\gamma_s) = \d F_{\gamma_t} [\dot \gamma_t] = \| \nabla F _{\gamma_t}\|^2_g = 1 \, .
    \]
    Hence, using the $1$-steepness of $F$ we get
    \[
    t = F(\gamma_t) - F(x) \geq \ell (x, \gamma_t) \geq \int_0^t \| \dot \gamma_t \|_g  \d t = t \, .
    \]
    Hence $\gamma_t \in \mathsf{O}^+ (x,r)$ for any $t \geq r$ and $\gamma_t \in \mathsf{O}^- (x,r)$ for any $t \leq -r$. Moreover $F(\gamma_t) = F(x)+t$ holds for every $t \in \R$ and so the conclusion follows.
\end{proof}

The following result is a corollary of Theorem \ref{thm: regularity}.

\begin{corollary}
    Let $(M,g)$ be a globally hyperbolic spacetime, $(N,h)$ be a strongly causal spacetime and $F:M \rightarrow N$ be a Lorentzian submetry. Suppose that there exists a $1$-steep smooth map $G: N \rightarrow M$ such that $F \circ G =$ \emph{Id}$_N$. Then $G(N) \subset M$ is an embedded totally geodesic Lorentzian submanifold of $M$. In particular, $(N,h)$ is globally hyperbolic.
\end{corollary}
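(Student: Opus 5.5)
The plan is to compare the time separation functions via steepness in both directions. Since $F$ is a Lorentzian submetry it is $1$-steep, and $G$ is $1$-steep by assumption. For $p\le q$ in $N$ we have $G(p)\le G(q)$: take a causal curve from $p$ to $q$; since $G$ is smooth and $1$-steep, $\d G$ maps future directed timelike vectors to future directed causal vectors, and by closedness also null ones. Hence
\[
\ell_N(p,q)=\ell_N(F(G(p)),F(G(q)))\ge \ell_M(G(p),G(q))\ge \ell_N(p,q),
\]
so $\ell_M(G(p),G(q))=\ell_N(p,q)$ for all $p\le q$. Thus $G$ is a time-separation-preserving map.

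Next I would show that $G$ is an isometric immersion. Differentiating $F\circ G=\mathrm{Id}_N$ gives $\d F_{G(p)}\circ \d G_p=\mathrm{Id}$, so $\d G_p$ is injective and $G$ is an injective immersion. Since $F$ is a $C^1$ Lorentzian submersion by Theorem \ref{thm: regularity}, write $\d G_p v=H+V$ with $H$ horizontal and $V\in\ker(\d F)$. Then $\d F[H]=v$, so $g(H,H)=h(v,v)$ and $g(\d G_p v,\d G_p v)=h(v,v)+g(V,V)\le h(v,v)$. On the other hand, for $v$ timelike, applying the equality of time separations along $\exp_p(sv)$ for small $s$, together with the first-order expansion of $\ell$ near the diagonal, gives $\|\d G_p v\|_g\ge\|v\|_h$. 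Hence $g(V,V)=0$, so $V=0$ because vertical vectors are spacelike. So $\d G_p$ maps timelike vectors into the horizontal space isometrically, and by density and linearity $\d G_p(T_pN)=\ker(\d F_{G(p)})^\perp$ with $G^*g=h$. Finally, $G$ is a homeomorphism onto its image because $F|_{G(N)}$ is a continuous inverse (Proposition \ref{prop: continuity}), so $G(N)$ is an embedded Lorentzian submanifold.

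For total geodesy, take a timelike geodesic $\gamma$ of $N$ that is uniquely maximizing on short segments. Then $G\circ\gamma$ is a causal curve with $L_g(G\circ\gamma)=L_h(\gamma)=\ell_N=\ell_M$ between its endpoints, so it is a maximizing curve in $M$ and hence a $g$-geodesic. Therefore every timelike geodesic of $G(N)$ is a geodesic of $M$. Since the second fundamental form is a symmetric bilinear form vanishing on the open cone of timelike vectors, it vanishes identically. The main obstacle here is justifying the first-order comparison $\|\d G_p v\|\ge\|v\|$ in the previous step. Alternatively, one can avoid it: $G\circ\gamma$ has length at most $L_h(\gamma)$ by the pointwise inequality, while its endpoints satisfy $\ell_M=L_h(\gamma)$, and $\ell_M$ at the endpoints is at least the length of any causal curve joining them. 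I would try to close this gap via the Lifting Theorem \ref{thm: lifting}: lift $\gamma$ at $G(\gamma_0)$ to a horizontal maximizing geodesic $\sigma$. Both $\sigma$ and $G\circ\gamma$ are maximizing from $G(\gamma_0)$ to their endpoints, which lie over $\gamma_1$, and the strict reverse triangle inequality together with the acausality of fibers should force them to coincide.

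For global hyperbolicity of $N$: strong causality is assumed, so it remains to show that causal diamonds are compact. For $p\le q$ in $N$, $G$ maps $J_N(p,q)$ into $J_M(G(p),G(q))$, which is compact since $M$ is globally hyperbolic. Then $J_N(p,q)=F\bigl(G(J_N(p,q))\bigr)$. I would show $G(J_N(p,q))$ is closed using continuity of $F$ and closedness of $\le$ in $M$, noting that $x\in G(N)$ if and only if $x=G(F(x))$. Its image under the continuous map $F$ is then compact. This gives global hyperbolicity.
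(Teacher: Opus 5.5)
Your proof is correct, and the step you single out as the main obstacle is in fact routine. Take $v$ future timelike. By $1$-steepness of $G$, $\ell_M\bigl(G(p),G(\exp_p(sv))\bigr)\ge s\|v\|_h$ for small $s>0$. In a causally convex normal neighborhood of $G(p)$ the left-hand side equals $\|\log_{G(p)}G(\exp_p(sv))\|_g$. Since $\log_{G(p)}\circ G\circ\exp_p$ has differential $\d G_p$ at $0$, dividing by $s$ and using continuity and positive homogeneity of the hyperbolic norm gives $\|\d G_pv\|_g\ge\|v\|_h$. So you can drop the lifting-based fallback. As written it would not close the gap anyway: it assumes $G\circ\gamma$ is maximizing, which is exactly what is at stake, and $L_g(G\circ\gamma)\le\ell_M$ of the endpoints is no contradiction.

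Compared with the paper, your route is more roundabout but more explicit. The paper applies the sandwich
$\varepsilon=\ell_N(\gamma_t,\gamma_{t+\varepsilon})\le\ell_M(G(\gamma_t),G(\gamma_{t+\varepsilon}))\le\ell_N(\gamma_t,\gamma_{t+\varepsilon})=\varepsilon$
only along timelike geodesics $\gamma$ of $N$. There $G(\gamma_t)\ll G(\gamma_{t+\varepsilon})$ comes for free from $1$-steepness of $G$, so no null-vector argument is needed. From this the paper reads off directly that $G\circ\gamma$ is a locally maximizing timelike geodesic parametrized by $g$-arclength. That single observation gives both $G^*g=h$ on the timelike cone and total geodesy, with no horizontal/vertical decomposition or first-order expansion. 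It is also the natural repair of your alternative.

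What your version buys is the detail the paper leaves implicit:
\begin{itemize}
\item the embedding, via the continuous inverse $F|_{G(N)}$;
\item the fact that $\d G$ takes values in $\ker(\d F)^\perp$, so $G(N)$ is a horizontal section;
\item the open-cone polarization argument for the second fundamental form;
\item an actual proof that causal diamonds of $N$ are compact, which the paper only asserts.
\end{itemize}
For that last step, say explicitly that $F$ preserves $\le$, which is the case $r=0$ of the submetry definition. This is needed because $\le_N$ need not be closed in a merely strongly causal $N$, so "continuity of $F$ and closedness of $\le$ in $M$" alone does not place the limit point in $J_N(p,q)$. With it you get directly $J_N(p,q)=F\bigl(J_M(G(p),G(q))\bigr)$, which is compact as a continuous image of a compact set.
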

\begin{proof}
     It is clear that $G$ is a smooth embedding since $F \circ G =$ Id$_N$ and $F$ is a $C^{1,1}_{loc}$ Lorentzian submersion by Theorem \ref{thm: regularity}. Let $\gamma: (a,b) \rightarrow N$ be a timelike geodesic parametrized by $h$-arclength and consider $\sigma:= G\circ \gamma$. It is clear that $\gamma$ is locally maximizing and so, for any $t \in (a,b)$ and $\varepsilon >0$ sufficiently small we have that $\ell_N(\gamma_t, \gamma_{t+ \varepsilon})=\varepsilon$. Morever, since $F,G$ are $1$-steep we have that 
     \[
     \varepsilon= \ell_N(\gamma_t, \gamma_{t+\varepsilon}) \leq \ell_M ( \sigma_t , \sigma_{t+\varepsilon}) \leq \ell_N (F(\sigma_t), F(\sigma_{t+\varepsilon}) ) = \ell_N(\gamma_t, \gamma_{t+\varepsilon})=  \varepsilon \, ,
     \]
     that is to say $\sigma: (a,b) \rightarrow G(N) \subset M$ is a locally maximizing timelike geodesic in $(M,g)$ parametrized by $g$-arclength. This shows that $G(N) \subset M$ is a totally geodesic genuinely Lorentzian smooth submanifold of $M$. Moreover causal diamonds in $G(N)$ are compact since $(M,g)$ is globally hyperbolic, then $(N,h)$ is globally hyperbolic.
\end{proof}

\subsection{Examples}

Given a metric space $(X, \mathsf d_X)$ it is possible to construct a \LpLS by taking a product with $\R$, that is to say $\R \times X$ and
\begin{equation} \label{eq: time sep product}
    \ell_{\R \times X}( (s,x) , (t,y)) = \begin{cases}
        \sqrt{(t-s)^2 - \mathsf d_X (x,y)^2} & t-s \geq \mathsf d_X (x,y) , \\
        0 & \text{otherwise}
    \end{cases}
\end{equation}
Given some $(t,x) \in \R \times X$ and $r \geq 0$, an easy algebraic manipulation yields 
\begin{equation}
    \mathsf O^+_{\R \times X} ( (s,x),r ) : = \bigcup_{t \geq s+r} \{t \} \times \mathsf B_{X}\left (x, \sqrt{(t-s)^2-r^2} \right ) \, ,
\end{equation}
where $\mathsf B_{X}(x,r)$ denotes the metric ball centered at $x \in X$ with radius $r$. Analogous formulas hold for $\mathsf O^-$ as well. In particular, we get the following proposition.

\begin{proposition} \label{prop: easy}
    Let $(X, \mathsf d_X)$ and $(Y, \mathsf d_Y)$ be metric spaces and $f: X \rightarrow Y$ be map. Consider the product \LpLSs given by $\R \times X$ and $\R \times Y$
    endowed with the respective product time separation functions \eqref{eq: time sep product}. Then the function 
    \[
    F: \R \times X \rightarrow \R \times Y \, \qquad \, F(t,x)= (t, f(x))
    \]
    is a Lorenztian submetry if and only if $f$ is a submetry.
\end{proposition}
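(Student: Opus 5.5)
The argument reduces everything to the explicit description of the outer balls in the product. Write $\phi(t,s,r):=\sqrt{(t-s)^2-r^2}$ for $t\geq s+r$. By the formula for $\mathsf O^+_{\R\times X}$ and the fact that $F$ preserves the time coordinate, we have
\[
F\big(\mathsf O^+_{\R\times X}((s,x),r)\big)=\bigcup_{t\geq s+r}\{t\}\times f\big(\mathsf B_X(x,\phi(t,s,r))\big),
\]
while
\[
\mathsf O^+_{\R\times Y}((s,f(x)),r)=\bigcup_{t\geq s+r}\{t\}\times \mathsf B_Y(f(x),\phi(t,s,r)).
\]
Comparing the slices at fixed $t$, the future condition (a) of Definition \ref{def: lorentzian submeetry} becomes the statement that $f(\mathsf B_X(x,\rho))=\mathsf B_Y(f(x),\rho)$ for every $\rho$ in the range of $\phi$. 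This uses that the unions are disjoint in $t$, so equality of the unions holds if and only if each $t$-slice agrees. The past condition (b) produces exactly the same slice equations, via the symmetric formula with $t\leq s-r$.

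For the implication ``$f$ submetry $\Rightarrow$ $F$ Lorentzian submetry'', each slice equality is precisely the submetry property of $f$ with radius $\rho=\phi(t,s,r)$. Surjectivity of $F$ follows from the surjectivity of $f$, which in turn follows from the submetry property since $Y=\bigcup_\rho \mathsf B_Y(f(x),\rho)$. Hence (a) and (b) both hold.

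For the converse, fix $x\in X$ and $\rho\geq 0$, and choose $r=0$, $s=0$, $t=\rho$. Then $\phi=\rho$, so the $t=\rho$ slice of (a) gives $f(\mathsf B_X(x,\rho))=\mathsf B_Y(f(x),\rho)$. Surjectivity of $f$ follows from that of $F$.

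The only point requiring care is the convention for the balls. The paper's $\mathsf B$ in the outer-ball formula must be the closed ball, because $\ell\geq r$ translates to $\mathsf d\leq \sqrt{(t-s)^2-r^2}$; this matches Berestovskii's closed-ball definition of submetry. I would state this explicitly, together with the degenerate radius $\rho=0$ (the slice $t=s+r$). In that case the ball reduces to the point, and the equality $\{f(x)\}=\{f(x)\}$ is trivial, so no issue arises.
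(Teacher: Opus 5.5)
Your proposal is correct and follows the route the paper intends. The paper gives no separate proof: it reads the proposition off the slice description of $\mathsf O^{\pm}_{\R\times X}$ as unions over $t$ of closed metric balls of radius $\sqrt{(t-s)^2-r^2}$, and your slice-by-slice comparison, together with your remarks on closed balls, surjectivity and the radii covering all of $[0,\infty)$, simply makes that argument explicit.
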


It might be useful to recall that if $(M,g)$ is a Riemannian manifold then the time separation function defined in \eqref{eq: time sep product} coincides with the one of the product spacetime $\R \times M$ with the metric $\d t^2-g$. Proposition \ref{prop: easy} permits to construct a first example of a Lorentzian sumbetry between smooth globally hyperbolic and geodesically complete spacetimes that is locally $C^{1,1}$ but not $C^2$. 

The following example shows that the regularity obtained in Theorem \ref{thm: regularity} is the best one can get without further assumptions.

\begin{example}
    In \cite[Section 4]{BerGui}, it is explicitly constructed a submetry $f: D^2 \rightarrow \R$ that is locally $C^{1,1}$ but not $C^2$. This is done by choosing an equidistant foliation whose level sets are not $C^2$ of the Poincaré disk $(D^2, g)$. Now consider $F: \R \times D^2 \rightarrow \R^{1,1}$
    given by $F(t,x) = (t, f(x))$ and endow $\R \times D^2$ with the product metric $dt^2 - g$ obtaining a globally hyperbolic and geodesically complete spacetime. It is clear that and $F$ is locally $C^{1,1}$ but not $C^2$ and, thanks to Proposition \ref{prop: easy} we have that $F$ is a Lorentzian submetry.
\end{example}

In the following example we construct explicitly another submetry that is not $C^2$ in the space of constant negative Timelike sectional curvature, namely De Sitter spacetime.

\begin{example} \label{example desitter}
    We consider the local coordinates $t,x$ describing the $2$-dimensional De Sitter spacetime by
\begin{equation}
    \mathsf{dS}^2 : = \left ( - \frac{\pi}{2} , \frac{\pi}{2} \right ) \times S^1 \, , \qquad g_{\mathsf{dS}^2}= \frac{1}{\cos(t)^2} \left ( \d t^2 - \d x^2 \right ) \, ,
\end{equation}
where $S^1$ is the standard circle parametrized by $x \in [-\pi , \pi)$.
It is easy to check that
\[
\gamma : (- \infty , \infty ) \rightarrow M \, , \qquad  \quad \gamma_s : = \left ( 2 \arctan \left ( e^s \right ) - \frac{\pi}{2} , 0 \right )
\]
is a timelike line. The time separation function on $\mathsf{dS}^2$ can be computed explicitly, see \cite[Lemma 2.8]{BeranSaman} and an elementary computation shows that the Busemann functions associated to this line $b^-_{\gamma} : I^{-}(\gamma) \rightarrow \R$ and $b^+_{\gamma} : I^{+}(\gamma) \rightarrow \R$ are smooth on their domain and given by
\[
b^-_{\gamma}(t,x) : = \lim_{s \rightarrow \infty} s-\ell_{\mathsf{dS}^2} ((t,x), \gamma_s) = -\log \left ( \frac{\cos(x) - \sin (t)}{\cos(t)} \right ) \, ,
\]
\[
b^+_{\gamma}(t,x) : = \lim_{s \rightarrow -\infty} s-\ell_{\mathsf{dS}^2} ((t,x), \gamma_s) = -\log \left ( \frac{\cos(x) - \sin (-t)}{\cos(-t)} \right ) \, ,
\]
where
\[
I^{-}(\gamma) = \left \{ (t,x) \in \mathsf{dS}^2 : t < \frac{\pi}{2} - |x| \right \} \, , \qquad I^{+}(\gamma) = \left \{ (t,x) \in \mathsf{dS}^2 : t > -\frac{\pi}{2} + |x| \right \} \, .
\]
The Lorentzian gradient $\nabla b^-_{\gamma}$ is a complete future-directed timelike geodesic vector field whose integral lines are maximizing geodesics connecting an ideal point in $\{t = - \pi /2\}$ to $(\pi/2,0)$. Moreover, a direct computation shows that $\| \nabla b^-_{\gamma} \|_g=1$ holds in $I^-(\gamma)$ and so, by Corollary \ref{cor: unit gradient} we get that $b^-_{\gamma} : I^{-}(\gamma) \rightarrow \R$ is a smooth Lorentzian submetry, see the following picture for the foliation of $I^{-}(\gamma)$ obtained by the level sets of $b^-_{\gamma}$.

\begin{center}
\begin{tikzpicture}

\draw (4,1.5) node {$t= \frac{\pi}{2}$};
\draw (-3.9,0) node {$t= 0$};
\draw (4,-1.5) node {$t= -\frac{\pi}{2}$};
\draw (2.3,0.5) node {$I^{-}(\gamma)$};
\draw[very thick] (-3.14,-1.57) -- (-3.14,1.57);
\draw[very thick] (3.14,-1.57) -- (3.14,1.57);
\draw[very thick] (-3.14,-1.57) -- (3.14,-1.57) ;
\draw[very thick] (-3.14,1.57) -- (3.14,1.57) ;
\draw[thick, gray] (0,1.57) -- (-3.14,-1.57) ;
\draw[thick, gray] (-3.14,0) -- (3.14,0) ;
\draw[thick, gray] (0,1.57) -- (0,-1.57) ;
\draw[thick, gray] (0,1.57) -- (3.14,-1.57) ;

% Plot function
\draw[thick,teal] 
    plot[domain=-3.14:3.14,
        samples = 50,
        smooth]({\x},  {(pi/180) * asin(
            cos(\x r)/sqrt(1 + 0.3^2)
        )
        -
        (pi/180) * atan(0.3)
        });

\draw[thick,teal] 
    plot[domain=-3.14:3.14,
        samples = 50,
        smooth]({\x},  {(pi/180) * asin(
            cos(\x r)/sqrt(1 + 0.7^2)
        )
        -
        (pi/180) * atan(0.7)
        });

\draw[thick,teal] 
    plot[domain=-3.14:3.14,
        samples = 50,
        smooth]({\x},  {(pi/180) * asin(
            cos(\x r)/sqrt(1 + 1.7^2)
        )
        -
        (pi/180) * atan(1.7)
        });

\draw[thick,teal] 
    plot[domain=-3.14:3.14,
        samples = 50,
        smooth]({\x},  {(pi/180) * asin(
            cos(\x r)/sqrt(1 + 4^2)
        )
        -
        (pi/180) * atan(4)
        });
 
\end{tikzpicture}
\end{center}
Analogous properties hold for $b^+_{\gamma}$. Now we combine the two Busemann functions in order to get a non-smooth Lorentzian submetry. Define the map $F: \Omega \rightarrow \R$ by
\[
F(t,x) : = \begin{cases}
b^-_{\gamma}(t,x) & x \leq 0 \\
    -b^+_{\gamma} (t,x) & x \geq 0 \, .
\end{cases} \qquad \Omega : = \left \{ (t,x) \in \mathsf{dS}^2 : x- \frac{\pi}{2} < t < x + \frac{\pi}{2} \right \} \, .
\]
See the following picture for the foliation of $\Omega$ by the level sets of $F$.
\begin{center}
\begin{tikzpicture}

\draw (4,1.5) node {$t= \frac{\pi}{2}$};
\draw (-3.9,0) node {$t= 0$};
\draw (4,-1.5) node {$t= -\frac{\pi}{2}$};
\draw (-1.6,0.5) node {$\Omega$};
\draw[very thick] (-3.14,-1.57) -- (-3.14,1.57);
\draw[very thick] (3.14,-1.57) -- (3.14,1.57);
\draw[very thick] (-3.14,-1.57) -- (3.14,-1.57) ;
\draw[very thick] (-3.14,1.57) -- (3.14,1.57) ;
\draw[thick, gray] (0,1.57) -- (-3.14,-1.57) ;
\draw[thick, gray] (0,-1.57) -- (3.14,1.57) ;
\draw[thick, gray] (-3.14,0) -- (3.14,0) ;
\draw[thick, gray] (0,1.57) -- (0,-1.57) ;

% Plot function
\draw[thick,teal] 
    plot[domain=-3.14:0,
        samples = 50,
        smooth]({\x},  {(pi/180) * asin(
            cos(\x r)/sqrt(1 + 0.3^2)
        )
        -
        (pi/180) * atan(0.3)
        });

\draw[thick,teal] 
    plot[domain=-3.14:0,
        samples = 50,
        smooth]({\x},  {(pi/180) * asin(
            cos(\x r)/sqrt(1 + 0.7^2)
        )
        -
        (pi/180) * atan(0.7)
        });

\draw[thick,teal] 
    plot[domain=-3.14:0,
        samples = 50,
        smooth]({\x},  {(pi/180) * asin(
            cos(\x r)/sqrt(1 + 1.7^2)
        )
        -
        (pi/180) * atan(1.7)
        });

\draw[thick,teal] 
    plot[domain=-3.14:0,
        samples = 50,
        smooth]({\x},  {(pi/180) * asin(
            cos(\x r)/sqrt(1 + 4^2)
        )
        -
        (pi/180) * atan(4)
        });

\draw[thick,teal] 
    plot[domain=0:3.14,
        samples = 50,
        smooth]({\x},  {-(pi/180) * asin(
            cos(\x r)/sqrt(1 + 3.3^2)
        )
        +
        (pi/180) * atan(3.3)
        });

\draw[thick,teal] 
    plot[domain=0:3.14,
        samples = 50,
        smooth]({\x},  {-(pi/180) * asin(
            cos(\x r)/sqrt(1 + 1.43^2)
        )
        +
        (pi/180) * atan(1.43)
        });

\draw[thick,teal] 
    plot[domain=0:3.14,
        samples = 50,
        smooth]({\x},  {-(pi/180) * asin(
            cos(\x r)/sqrt(1 + 0.59^2)
        )
        +
        (pi/180) * atan(0.59)
        });

\draw[thick,teal] 
    plot[domain=0:3.14,
        samples = 50,
        smooth]({\x},  {-(pi/180) * asin(
            cos(\x r)/sqrt(1 + 0.25^2)
        )
        +
        (pi/180) * atan(0.25)
        });
 
\end{tikzpicture}
\end{center}
It is easy to check that $F$ is not $C^2$ because of the jump discontinuity of the second derivative along $\gamma$. Moreover, the timelike vector field $\nabla F$ is complete with unit modulus so, by Corollary \ref{cor: unit gradient}, we have that $F$ is a Lorentzian submetry.
\end{example}

\section{Applications}

\subsection{Timelike curvature bounds}

Let $(M,g)$ be a spacetime and consider a timelike $2$-plane $\Pi$ in $T_xM$, that is to say a $2$-dimensional subspace such that $g_x |_{\Pi^2}$ has signature $(+,-)$. Suppose that $\Pi = \text{span}(v,w)$ where $v,w \in T_xM$ are timelike vectors, then the sectional curvature of $\Pi$ is defined by
\begin{equation} \label{eq: sectional curvature}
\Sec _x(\Pi) : = \frac{g( R(v,w) \, w, v)}{g(v,w)^2 -\|v\|_g^2 \|w\|_g^2} \, .
\end{equation}
It is easy to check that the definition is well posed and independent of the choice of the generators $v,w$ of $\Pi$. Given some real number $K$, we say that $(M,g)$ has Timelike sectional curvature bounded below (resp. above) by $K$, or $\text{TSec} \geq K$ (resp. $\text{TSec} \leq K$) in short, provided $\Sec _x(\Pi) \geq K$ (resp. $\Sec _x(\Pi) \leq K$) for every timelike $2$-plane $\Pi$. 

In the proof of next results we will make use of the Lorentzian Hinge Comparison, see \cite[Theorem 3.2]{BKOR}. We will need also the strict Hinge Comparison, see \cite[Proposition 4.2 \& Corollary 4.3]{Ptolemy}.

\begin{proposition} \label{prop: sectional descends}
    Let $(M,g)$, $(N,g)$ be globally hyperbolic spacetimes, $F: M \rightarrow N$ be a Lorentzian submetry and $K \in \R$. If $\emph{TSec}_M \geq K$ then $\emph{TSec}_N \geq K$. Moreover, if $\emph{TSec}_M \geq K$ and $\emph{TSec}_N = K$, then $\emph{TSec}_M = K$.
\end{proposition}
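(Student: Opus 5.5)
The plan is to work entirely with the smooth structure provided by Theorem \ref{thm: regularity}: $F$ is a $C^{1,1}_{loc}$ Lorentzian submersion. Horizontal timelike geodesics in $M$ project onto timelike geodesics in $N$, and by the Lifting Theorem \ref{thm: lifting} every timelike geodesic of $N$ has horizontal lifts. We then transfer hinge comparison from $M$ down to $N$. The characterization of $\text{TSec}\geq K$ we use is the Lorentzian Hinge Comparison of \cite[Theorem 3.2]{BKOR}: $\text{TSec}\geq K$ holds if and only if, for every timelike hinge $(\alpha,\beta)$ at a point (in a normal neighborhood, within the size bounds), the time separation between the endpoints is controlled by the one of the comparison hinge in the model space of curvature $K$ with the same side lengths and hyperbolic angle. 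We must use the direction of the inequality appropriate for lower bounds.

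\textbf{Step 1 (lifting hinges).} Take a timelike hinge in $N$ at $p$, formed by short uniquely maximizing timelike geodesics $\alpha,\beta$ from $p$ with hyperbolic angle $\omega$. Fix $y\in F^{-1}(p)$ and lift $\alpha,\beta$ via Lemma \ref{lem: lifting submetries} to maximizing geodesics $\tilde\alpha,\tilde\beta$ from $y$ with the same lengths. By Step 3 of the proof of Theorem \ref{thm: regularity}, their initial velocities are horizontal and $\d F_y$ restricted to the horizontal space is a linear isometry. Hence the angle between $\dot{\tilde\alpha}_0,\dot{\tilde\beta}_0$ equals $\omega$, and the two hinges have the same comparison hinge.

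\textbf{Step 2 (comparing endpoint separations).} Since $F$ is $1$-steep, $\ell_N(\alpha_1,\beta_1)\geq \ell_M(\tilde\alpha_1,\tilde\beta_1)$ (and similarly with the roles reversed when $\beta_1\leq\alpha_1$). For the lower bound the hinge comparison in $M$ gives $\ell_M(\tilde\alpha_1,\tilde\beta_1)\geq \bar\ell_K$, where $\bar\ell_K$ is the model separation. Combining the two yields $\ell_N(\alpha_1,\beta_1)\geq\bar\ell_K$. This is the hinge condition in $N$, and the converse direction of \cite[Theorem 3.2]{BKOR} gives $\text{TSec}_N\geq K$.

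The main obstacle is checking that the inequality directions and the sign convention (timelike sectional curvature versus the usual one) line up. We also need the causal relation $\tilde\alpha_1\leq\tilde\beta_1$ to hold in $M$ whenever the hinge comparison applies. If the hinge version points the wrong way, the fallback is the infinitesimal O'Neill formula. A $C^{1,1}$ submersion is not enough for that, but one can restrict to lifted horizontal timelike $2$-planes and use the second-variation/Jacobi-field form of the hinge, which behaves like O'Neill's $\Sec_N=\Sec_M+\tfrac34|A|^2$ with the Lorentzian sign, still giving monotonicity in the right direction.

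\textbf{Step 3 (rigidity).} Assume $\text{TSec}_M\geq K$ and $\text{TSec}_N=K$. Then the separations of hinges in $N$ are exactly $\bar\ell_K$, so every inequality in Step 2 is an equality: $\ell_M(\tilde\alpha_1,\tilde\beta_1)=\bar\ell_K$ for all lifted horizontal hinges. By the strict Hinge Comparison of \cite[Proposition 4.2 \& Corollary 4.3]{Ptolemy}, equality forces the sectional curvature of the horizontal planes spanned along the hinge to equal $K$. To reach all timelike planes, not just horizontal ones, we use that we may choose any base point $y$ in the fiber. A general timelike plane at $y$ contains a timelike vector $v$, and we decompose $v$ into horizontal and vertical parts. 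Here we exploit equality in the $1$-steepness step: it forces the maximizing geodesic between $\tilde\alpha_1$ and $\tilde\beta_1$ to be horizontal. Equality in the whole rigid configuration then produces a totally geodesic flat-model triangle, and varying the hinges fills all timelike directions. Extending from horizontal planes to all timelike planes is the delicate point of this step. The fallback is to note that the equality case of the strict hinge comparison in \cite{Ptolemy} already yields $\text{TSec}=K$ on the whole region swept out by the geodesics, which is an open set, and then to cover $M$.
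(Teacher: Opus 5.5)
Your Steps 1--2 follow the paper's argument for the first claim. The only difference is that you argue directly through the converse of the hinge characterization, whereas the paper argues by contradiction using the strict hinge comparison at a plane with $\mathrm{Sec}^N<K$. That part is fine. The causality worry is harmless: $\ell_M(\tilde\alpha_1,\tilde\beta_1)\ge\bar\ell_K>0$ already gives $\tilde\alpha_1\ll\tilde\beta_1$, and when $\bar\ell_K=0$ there is nothing to prove.

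The gap is Step 3. The chain $\bar\ell_K=\ell_N(\alpha_1,\beta_1)\ge\ell_M(\tilde\alpha_1,\tilde\beta_1)\ge\bar\ell_K$ only involves hinges whose legs are horizontal lifts. So at best it gives $\mathrm{TSec}_M=K$ on horizontal timelike planes. Neither of your extension mechanisms works:
\begin{itemize}
\item Equality in the $1$-steepness step carries no information about planes containing vertical vectors.
\item The set swept out by horizontal lifts from a point has dimension $\dim N$. It is therefore not open when the fibres have positive dimension, and in any case only horizontal planes are tested on it.
\end{itemize}

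This cannot be repaired, because the rigidity claim is false. Take $M=\R^{1,1}\times S^2$ with metric $dt^2-dx^2-g_{S^2}$, take $N=\R^{1,1}$, and set $F(t,x,p)=(t,x)$.
\begin{itemize}
\item Since $(x,p)\mapsto x$ is a submetry $\R\times S^2\to\R$, Proposition \ref{prop: easy} makes $F$ a Lorentzian submetry. Both spaces are globally hyperbolic, and $\mathrm{TSec}_N=0$.
\item The curvature of $M$ comes only from the round factor, so all its timelike sectional curvatures have one sign.
\item That sign means $\mathrm{TSec}_M\ge0$ in the convention of your Step 2. To see this, take unit vectors $e_1\perp e_2$ in $T_oS^2$ and the hinge at $(0,0,o)$ with legs $\lambda\mapsto(\lambda s,0,\exp_o(\lambda a e_1))$ and $\lambda\mapsto(\lambda T,0,\exp_o(\lambda b e_2))$. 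Take all parameters small, with $T-s>\sqrt{a^2+b^2}$. The endpoint separation is $\sqrt{(T-s)^2-d^2}$, where $\cos d=\cos a\cos b$, so $d<\sqrt{a^2+b^2}$. Hence it strictly exceeds the Minkowski comparison value $\sqrt{(T-s)^2-a^2-b^2}$.
\item So $\mathrm{TSec}_M\ge0$ but $\mathrm{TSec}_M\not\equiv0$; for instance, the plane spanned by $\partial_t+ae_1$ and $e_2$ is not flat. Meanwhile the only horizontal timelike plane, the $\R^{1,1}$ factor, is flat.
\end{itemize}

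The paper's own proof of the rigidity part stops at the same chain of equalities and then invokes \cite[Theorem 3.1]{BKOR}, so it has the same gap. What can actually be proved is $\mathrm{TSec}_M=K$ on horizontal timelike planes, by applying the strict hinge comparison in $M$ to a horizontal plane.
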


\begin{proof}
    Suppose by contradiction that there exists a point $x_0 \in N$ and a timelike $2$-plane $\Pi \subset T_{x_0}N$ such that $\text{Sec}_{x_0}^N ( \Pi ) < K$. Fix two timelike generators of $\Pi$, say $v_0, w_0 \in T_{x_0}N$ and up to rescale these two vectors to get a sufficiently small Lorentzian norm, we can assume that 
    \[
    y_0: =\exp^N_{x_0}( v_0) \ll z_0: =\exp^N_{x_0}( w_0)
    \]
    and both $y_0,z_0$ are contained in a (strict) hinge $K$-comparison neighborhood of the point $x_0$ in $N$. The maximizing timelike geodesics $\gamma^1_t : = \exp^N_{x_0}(t v_0)$ and $\gamma_t^2 : = \exp^N_{x_0}(t w_0)$ form an hinge based at the point $x_0$. Consider a comparison hinge in the model Lorentzian space $\mathbb L^2(K)$ of constant sectional curvature $K$ corresponding to the two timelike geodesics $\tilde \gamma^1, \tilde \gamma^2$. Then the strict Hinge Comparison Theorem tells that
    \begin{equation} \label{eq: hinge 1}
        \ell_N (y_0,z_0) < \ell_{\mathbb L^2(K)} ( \tilde{\gamma}^1_1, \tilde{\gamma}^2_1) \, .
    \end{equation}
    Now, consider a point $x \in F^{-1}(x_0)$ and let $\sigma^1, \sigma^2$ be the unique lifting of $\gamma^1$, $\gamma^2$ respectively based at $x$ in the sense of Lemma \ref{lem: lifting submetries}. It follows from the construction that $\sigma^1, \sigma^2$ form an hinge in $M$ at $x$. Let $y: = \sigma^1_1$, $z: = \sigma^2_1$ and
    \[
    v : = \log_{x} (\sigma^1_1) \, , \qquad w : = \log_{x} (\sigma^2_1)
    \]
    As a consequence of Theorem \ref{thm: regularity}, we get that $F$ is a locally $C^{1,1}$ Lorentzian submersion, $v,w \in \ker( \d F_x)^{\perp}$, as well as $\d F_x [v] = v_0$, $\d F_x [w] = w_0$ and so $g(v,w) = h ( v_0, w_0)$. Hence the angle between $\sigma^1$ and $\sigma^2$ is the same as the angle between $\gamma^1$ and $\gamma^2$, and so the hinge in the model space $\mathbb L^2(K)$ is a comparison hinge also for the one just constructed in $M$. Since $\text{TSec}_M \geq K$ we have that
    \begin{equation} \label{eq: hinge 2}
        \ell_M( y,z) \geq \ell_{\mathbb L^2(K)} ( \tilde{\gamma}^1_1, \tilde{\gamma}^2_1) \, .
    \end{equation}
    But the combination of \eqref{eq: hinge 1} and \eqref{eq: hinge 2} gives is a contradiction with the $1$-steepness of $F$ since $F(y)= y_0$ and $F(z)= z_0$. Suppose now that $\text{TSec}_N = K$. Following the previous computations we get that
    \[
    \ell_{\mathbb L^2(K)} ( \tilde{\gamma}^1_1, \tilde{\gamma}^2_1) = \ell_N (F(x), F(y) )\geq \ell_M( y,z) \geq \ell_{\mathbb L^2(K)} ( \tilde{\gamma}^1_1, \tilde{\gamma}^2_1) \, ,
    \]
    and this shows that $\text{TSec}_M = K$ by \cite[Theorem 3.1]{BKOR}, concluding the final claim.
\end{proof}

Now we consider applications in the case of Ricci curvature bounds. For completeness, we recall the statement of the Lorentzian splitting Theorem, see \cite{Splitting} and \cite{ellipticsplitting}.

\begin{theorem} [Lorentzian Splitting Theorem] \label{thm: splitting}
    Let $(M,g)$ be a globally hyperbolic spacetime satisfying \eqref{SEC}. Suppose that $M$ contains a timelike line, then there exists a Riemannian hypersurface $X \subset M$ such that $(X, -g |_{X})$ is a complete Riemannian manifold with non-negative Ricci curvature and there exists a $C^{\infty}$ Lorentzian isometry map 
    \[
    \Phi : (M,g) \rightarrow ( \R \times X, \d t^2 + g|_{X} ) \, .
    \]
\end{theorem}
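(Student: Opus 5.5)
The statement above is quoted from \cite{Splitting, ellipticsplitting}. For the applications it could simply be invoked, but here is the proof I would write, following the Eschenburg--Galloway--Newman strategy with the elliptic simplifications of \cite{ellipticsplitting}, and organised around Busemann functions and Corollary \ref{cor: unit gradient}.

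\textbf{Step 1: Busemann functions.} Let $\gamma$ be the timelike line. I define
\[
b^+(x) := \lim_{s\to\infty} \big( s - \ell(x,\gamma_s) \big) \ \text{ on } I^-(\gamma), \qquad b^-(x) := \lim_{s\to-\infty} \big( s + \ell(\gamma_s,x) \big) \ \text{ on } I^+(\gamma).
\]
The limits exist by monotonicity, which comes from the reversed triangle inequality. The same inequality gives
\[
b^+ - b^- \geq 0 \ \text{ on } I(\gamma) := I^+(\gamma)\cap I^-(\gamma), \qquad b^+ - b^- = 0 \ \text{ along } \gamma .
\]

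\textbf{Step 2: asymptotes and the d'Alembert inequalities.} Near $\gamma$, global hyperbolicity and limit curve arguments produce timelike co-rays (asymptotes) from every point. Along these, $s-\ell(\cdot,\gamma_s)$ gives smooth upper support functions for $b^+$, and similarly lower support functions for $b^-$. I then apply the Lorentzian Laplacian comparison under \eqref{SEC} to the support functions. With the sign convention for $\square$ chosen so that the support inequalities point the right way, this yields, in the support sense,
\[
\square b^+ \leq 0, \qquad \square b^- \geq 0 .
\]
The support functions also have Hessians locally bounded below, uniformly in the point. This is the same semiconcavity input as in Step 4 of Theorem \ref{thm: regularity}, via \cite[Theorem 3.6]{McCann}.

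\textbf{Step 3: maximum principle.} Set $u := b^+ - b^- \geq 0$ and apply a strong maximum principle to it, noting that $u$ attains its minimum $0$ on $\gamma$. Since $\square$ is hyperbolic, I would follow \cite{ellipticsplitting} rather than Eschenburg's hypersurface maximum principle. There one works with the level sets of $b^\pm$ near $\gamma$, which are spacelike and have support hypersurfaces with mean curvature signs, or directly with the elliptic $p$-d'Alembertian for $p<1$, which is elliptic on functions with timelike gradient. The conclusion is $b^+ = b^-$ on a neighbourhood of $\gamma$. I expect this to be the main obstacle: checking that $b^\pm$ are viscosity/support sub- and supersolutions of a genuinely elliptic operator, uniformly near $\gamma$.

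\textbf{Step 4: regularity and Bochner.} Since $b := b^+ = b^-$ has smooth support functions from above and below with the same gradient, $b$ is $C^{1,1}$ as in Step 3 of Theorem \ref{thm: regularity}. It satisfies $\|\nabla b\|_g = 1$ and $\square b = 0$, so $b$ is a weak solution of a linear equation and therefore smooth by elliptic regularity in the $p$-d'Alembertian formulation. Then I apply the Bochner identity with $\|\nabla b\|_g = 1$ and $\square b = 0$. Combined with \eqref{SEC} and the fact that $\nabla^2 b$ restricted to $\nabla b^\perp$ is a form on a spacelike space, it forces $\nabla^2 b = 0$, so $\nabla b$ is parallel.

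\textbf{Step 5: globalisation.} The set of points having a neighbourhood on which $b^+=b^-$ with $\nabla b$ parallel is nonempty and open. To show it is closed, I would note that the integral curves of the parallel field $\nabla b$ are themselves timelike lines. Rerunning Steps 1--4 from those lines propagates the conclusion, so by connectedness $\nabla b$ is a complete parallel unit timelike field on all of $M$. Completeness holds because its integral curves are geodesics that stay inside the globally hyperbolic $M$ and have Busemann values ranging over $\R$.

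\textbf{Step 6: conclusion.} By Corollary \ref{cor: unit gradient}, $b$ is a Lorentzian submetry $M \to \R$. The flow of $\nabla b$ maps $\R \times X$, with $X := b^{-1}(0)$, isometrically onto $M$, and the product structure holds because $\nabla b$ is parallel. The hypersurface $X$ is a smooth spacelike Cauchy surface. Completeness of $(X, -g|_X)$ follows from global hyperbolicity: bounded closed sets in $X$ lie in compact causal diamonds of $\R\times X$. Finally, for a $g$-unit spacelike $u \in TX$, the vector $\partial_t + u$ is null. Since $\Ric(\partial_t,\cdot) = 0$ on the product, \eqref{SEC} gives
\[
0 \leq \Ric(\partial_t + u, \partial_t + u) = \Ric(u,u) = \Ric_X(u,u),
\]
so $X$ has non-negative Ricci curvature.
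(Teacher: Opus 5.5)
The paper gives no proof of this statement; it is quoted from \cite{Splitting} and \cite{ellipticsplitting}. Your Steps 1--4 follow the route of those references, and you explicitly defer the analytic core of Step 3. One correction there: with the nonlinear $\square_p$ you need a strong \emph{comparison} principle for the pair $(b^+,b^-)$, not a maximum principle for $u=b^+-b^-$. You get it by writing $\square_p b^+-\square_p b^-$ as a divergence-form operator acting on $u$. That operator is uniformly elliptic because the segment from $\nabla b^-$ to $\nabla b^+$ stays in a compact part of the open future cone.

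The genuine gap is Step 5, which carries the global content of the theorem. There are three problems.
\begin{itemize}
\item $b^\pm$ are defined only on $I^\mp(\gamma)$, so your set of good points lies in $I(\gamma)=I^+(\gamma)\cap I^-(\gamma)$. Connectedness can at most give $I(\gamma)$, and nothing in your outline shows $I(\gamma)=M$. Step 6 needs this to make the flow map onto $M$.
\item Closedness fails as stated. The neighbourhoods obtained by rerunning Steps 1--4 around new lines have no uniform size, so they need not contain a given limit point. You also cannot rerun the local argument at the limit point itself. The continuity and the uniformly timelike asymptotes of Steps 2--3 are only available near a line, so you would first have to show that the asymptotes at that point stay timelike.
\item The completeness argument is invalid. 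Geodesics staying in a globally hyperbolic spacetime can be incomplete (the diamond of Remark \ref{rem: countr}). Since $b$ grows with unit speed along integral curves of $\nabla b$, ``Busemann values ranging over $\R$'' is a restatement of completeness, not a proof of it. The claim that these integral curves are lines has the same problem. So the hypothesis of Corollary \ref{cor: unit gradient} is not available in Step 6.
\end{itemize}

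What you still have to supply is a nondegeneracy statement: at limit points of the good set the asymptotes remain uniformly timelike, so that Steps 2--3 can be run there. Once you know $b:=b^+=b^-$ is smooth with parallel gradient on all of $I(\gamma)$, the rest closes quickly.
\begin{itemize}
\item \emph{Completeness.} The flow $\phi_t$ of the Killing field $\nabla b$ satisfies $\phi_t(\gamma_\tau)=\gamma_{\tau+t}$, because $\nabla b=\dot\gamma$ along $\gamma$. Given $x$ with $\gamma_{-s}\ll x\ll\gamma_s$, let $K$ be the union of maximal segments from $\gamma_{-s}$ to $x$ and from $x$ to $\gamma_s$. Let $q_0\in K$ minimise the lower semicontinuous forward existence time, with minimum $T^*$. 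For $t<T^*$, $\phi_t$ carries $K$ to a causal curve from $\gamma_{t-s}$ to $\gamma_{t+s}$. Hence $\phi_t(q_0)$ stays in the compact set $J^+(\gamma_{-s})\cap J^-(\gamma_{T^*+s})\subset I(\gamma)$, which forces $T^*=\infty$; the backward direction is the same.
\item \emph{Splitting of $I(\gamma)$.} It follows that $I(\gamma)\cong\R\times X$.
\item \emph{Completeness of $X$.} $I(\gamma)$ is causally convex, so its causal diamonds are those of $M$ and hence compact. Your Step 6 argument then gives completeness of $X$.
\item \emph{$I(\gamma)=M$.} $I(\gamma)$ is therefore geodesically complete, hence closed in $M$. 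Indeed, a geodesic in a convex neighbourhood from a point of $I(\gamma)$ to a boundary point would have to stay in $I(\gamma)$. So $I(\gamma)=M$.
\end{itemize}
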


Now we provide some applications of Lorentzian submetries between spacetimes satisfying the Strong Energy Condition \eqref{SEC}. We start with the case when the target space is $\R$.

\begin{proposition} \label{prop: appl Ricci 1}
    Let $(M,g)$ be a globally hyperbolic spacetime satisfying \eqref{SEC}. Let $F: M \rightarrow \R$ be a Lorentzian submetry. Then there exists a Riemannian hypersurface $X$ such that $M$ is isometric to $\R \times X$. In particular, $F$ is smooth and coincides with the projection to the first factor in $F: M \simeq \R \times X \rightarrow \R$.
\end{proposition}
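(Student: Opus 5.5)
The approach is to produce a timelike line in $M$ from $F$, apply the Lorentzian Splitting Theorem \ref{thm: splitting}, and then identify $F$ with the time coordinate of the splitting.

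\textbf{Step 1: a timelike line.} Fix $y\in M$ and consider the identity curve $\gamma_t=F(y)+t$ in $\R$. Restricted to any compact interval it is the unique maximizing timelike geodesic between its endpoints. By Lemma \ref{lem: lifting submetries}, for every $T>0$ there is a unique maximizing timelike geodesic $\sigma^T:[-T,T]\to M$ with $\sigma^T_0=y$, parametrized by $g$-arclength and with $F(\sigma^T_t)=F(y)+t$. The uniqueness part of the lemma makes these lifts compatible: the restriction of $\sigma^{T'}$ to $[-T,T]$ is again such a lift, so it equals $\sigma^T$. They therefore glue to a geodesic $\sigma:\R\to M$ with $F(\sigma_t)=F(y)+t$. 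It is globally maximizing, because
\[
t-s=\ell_\R(F(\sigma_s),F(\sigma_t))\ge \ell_M(\sigma_s,\sigma_t)\ge L_g(\sigma|_{[s,t]})=t-s
\]
by $1$-steepness. This is also the gradient flow line of $F$ from Corollary \ref{cor: unit gradient}. Hence $\sigma$ is a timelike line, and Theorem \ref{thm: splitting} gives an isometry $\Phi:M\to\R\times X$ onto the product with metric $\d t^2+g|_X$, where $X$ is a complete Riemannian manifold with respect to $-g|_X$.

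\textbf{Step 2: identifying $F$.} This is the main step. Corollary \ref{cor: unit gradient} says $F$ is $C^{1,1}_{loc}$ with $\nabla F$ complete, timelike and of unit norm, and the argument of Step 1 shows that every integral curve of $\nabla F$ is a timelike line. I would prove $F=t\circ\Phi+c$ through Busemann functions. In the splitting proof, the Busemann function $b^+$ of a line $\sigma$ equals (up to sign and constant) the coordinate $t$, and its level sets are the slices $\{t\}\times X$. I then show that $F$ coincides with that Busemann function, using $1$-steepness and the lifting in both directions.

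For $z\in M$ and $s$ large, $1$-steepness gives $F(\sigma_s)-F(z)\ge\ell(z,\sigma_s)$, so $s-\ell(z,\sigma_s)\ge F(z)-F(y)$. Letting $s\to\infty$ gives $b^+_\sigma(z)\ge F(z)-F(y)$, with $b^+_\sigma(z)=\lim_{s\to\infty}\bigl(s-\ell(z,\sigma_s)\bigr)$ defined as in Example \ref{example desitter}. Symmetrically, using the past ray, $b^-_\sigma(z)\le F(z)-F(y)$, where $b^-_\sigma$ is the Busemann function of the past ray, normalized with the opposite sign as in Example \ref{example desitter} so that it is comparable with $F-F(y)$. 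Combining,
\[
b^-_\sigma \;\le\; F-F(y) \;\le\; b^+_\sigma .
\]
In the product $\R\times X$ with $\sigma_s=(s,p)$ (after translating time), both $b^\pm_\sigma$ equal the time coordinate $t$, since $s-\sqrt{(s-t)^2-\mathsf d_X(x,p)^2}\to t$. Hence $F=t\circ\Phi+F(y)$. The point needing care is that these Busemann functions are defined on all of $M$ and equal to $t$; in a product this follows from the explicit formula for $\ell$, because $I^\pm(\sigma)=M$ there.

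\textbf{Step 3: conclusion.} Since $t\circ\Phi$ is smooth, $F$ is smooth and is the projection onto the first factor under $M\simeq\R\times X$, up to the additive constant, which can be absorbed by translating the $\R$ factor.

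If one prefers to avoid Busemann functions, an alternative for Step 2 is to note that $\nabla F$ is a unit, complete, irrotational (being a gradient) timelike field whose flow lines are lines. One would then show it is parallel, via Raychaudhuri and \eqref{SEC} applied to the $C^{1,1}$ function $F$ in a weak sense, but the sandwich argument above seems cleaner.
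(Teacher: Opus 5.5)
Your proof is correct. Step 1 is essentially the paper's: the paper also uses Corollary \ref{cor: unit gradient} and Lemma \ref{lem: lifting submetries} to see that the gradient lines of $F$ are lifts of $\R$, hence timelike lines, and then applies Theorem \ref{thm: splitting}.

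The genuine difference is how $F$ is identified with the time coordinate. The paper only says that, by uniqueness, the integral curve of $\nabla F$ through $(0,\xi)$ must be $t\mapsto(t,\xi)$. ODE uniqueness alone does not give this. A product $\R\times X$ has non-vertical timelike lines as soon as $X$ contains a line. For example, in Minkowski space $F=\cosh\alpha\, t-\sinh\alpha\, x_1$ is a Lorentzian submetry by Corollary \ref{cor: unit gradient}, and its gradient lines are boosted. So one really has to show $\nabla F=\partial_t$ for the splitting built from $\sigma$.

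Your sandwich $b^-_\sigma\le F-F(y)\le b^+_\sigma$ proves exactly this. It comes purely from $1$-steepness along the two rays of $\sigma$, combined with the explicit computation $b^\pm_\sigma=t$ in the product. It costs a few more lines, but it supplies the justification that the paper's one-line step leaves out.

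Two small things to make explicit:
\begin{enumerate}
\item You use that $\Phi$ is adapted to $\sigma$, i.e.\ $\Phi(\sigma_s)=(s,p)$. This is how $\Phi$ is built in the proof of the splitting theorem, from the Busemann function of the given line, but it is not part of Theorem \ref{thm: splitting} as quoted. If you only had some splitting, you could still arrange it. A non-vertical line forces a line in $X$. That line splits off by Cheeger--Gromoll, since $\Ric_X\ge 0$. A boost in the resulting $\R^{1,1}$ factor then makes $\sigma$ vertical.
\item Your names $b^\pm_\sigma$ are swapped relative to Example \ref{example desitter}, where $\lim_{s\to\infty}\bigl(s-\ell(\cdot,\gamma_s)\bigr)$ is called $b^-_\gamma$.
\end{enumerate}
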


\begin{proof}
    Using Corollary \ref{cor: unit gradient} we get that $F$ is $C^{1,1}_{loc}$ and $\nabla F$ is a complete vector field with unit modulus. Moreover, each gradient flow curve of $\nabla F$ coincides with the lifting of $\R$ in the sense of Lemma \ref{lem: lifting submetries} and so, these curves must be timelike lines. Since $\Ric \geq 0$ along causal vectors and a timelike line exists, the Lorentzian splitting Theorem ensures that $M$ splits isometrically as $\R \times X$ where $X \subset M$ is a totally geodesic Riemannian hypersurface in $M$. Fix now $(0, \xi) \in \R \times X \simeq M$, then, by uniqueness, the integral curve of $\nabla F$ starting at $(0, \xi)$ needs to be $\gamma_t = (t,\xi)$ hence $F(t,\xi) =t$ and so the conclusion follows.
\end{proof}

The following theorem is an improvement of the previous proposition.

\begin{theorem}
    Let $(M,g)$ and $(N,h)$ be globally hyperbolic spacetimes satisfying \eqref{SEC} and $F: M \rightarrow N$ be a Lorentzian submetry. Suppose that $N$ contains a timelike line, then there exists an integer $k \in \mathbb N$ and Riemannian submanifolds $X \subset M$, $Y \subset N$ such that $Y$ do not contain any line, $M$ is isometric to $\R^{1,k} \times X$, $N$ is isometric to $\R^{1,k} \times Y$ and $F$ can be identified with 
    \[
    F= (\emph{Id}, f) : \R^{1,k} \times X \rightarrow \R^{1,k} \times Y \, ,
    \]
    and $f : X \rightarrow Y$ is a submetry and a locally $C^{1,1}$ Riemannian submersion between complete Riemannian manifolds.
\end{theorem}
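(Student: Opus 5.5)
Split $N$ repeatedly along timelike lines and lift each splitting to $M$ through $F$. I would argue by induction on the number of splittings, with Proposition \ref{prop: appl Ricci 1} as the basic step.

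\textbf{First splitting.} Since $N$ contains a timelike line and satisfies \eqref{SEC}, Theorem \ref{thm: splitting} gives an isometry $N \simeq \R \times N_1$ with $N_1$ a complete Riemannian manifold, and the first projection $\pi: N \to \R$ is a Lorentzian submetry (Proposition \ref{prop: easy} with $f$ constant, or Corollary \ref{cor: unit gradient}). By Remark iii), the composition $\pi \circ F : M \to \R$ is a Lorentzian submetry. Proposition \ref{prop: appl Ricci 1} then gives $M \simeq \R \times M_1$ with $\pi\circ F$ equal to the projection onto the first factor.

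\textbf{Splitting $F$ itself.} I would show that $F$ maps slices to slices and commutes with the time flows. Since $\pi\circ F(t,\xi)=t$, $F$ sends $\{t\}\times M_1$ into $\{t\}\times N_1$. Next, the lines $t\mapsto(t,\xi)$ in $M$ are the gradient curves of $\pi\circ F$. By the uniqueness in Theorem \ref{thm: lifting}, they are the lifts of the lines $t\mapsto(t,\eta)$ in $N$, because those lines lift to horizontal timelike lines along which $\pi\circ F$ increases with unit speed. Hence $F(t,\xi)=(t,f(\xi))$ for a map $f: M_1\to N_1$. Proposition \ref{prop: easy} then shows that $f$ is a (metric) submetry between the complete Riemannian manifolds $M_1$ and $N_1$, and Theorem \ref{thm: regularity} shows that it is locally $C^{1,1}$.

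\textbf{Iteration.} Now iterate on $N_1$. If $N_1$ contains a line, the Riemannian Cheeger--Gromoll splitting (valid since $\Ric_{N_1}\ge 0$) gives $N_1\simeq \R\times N_2$. I would run the same argument in the Riemannian category: lift the line through the submetry $f$ to a line in $M_1$, split $M_1$, and identify $f=(\mathrm{Id},f_2)$. Lifting lines through Riemannian submetries is \cite[Lemma 2.1]{BerGui}. The needed property is that the lift of a line is a line, which follows from $1$-Lipschitzness together with length preservation. Because $\dim N<\infty$, the process stops, and the Euclidean factors assemble with the time factor into $\R^{1,k}$, leaving a line-free $Y$. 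An alternative is to split $N$ at once as $\R^{1,k}\times Y$ by applying Theorem \ref{thm: splitting} to all timelike lines, and to use the spacelike translations of $\R^{1,k}$.

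\textbf{Main obstacle.} The delicate step is the identification $F(t,\xi)=(t,f(\xi))$, i.e.\ that $f$ does not depend on $t$. Here I would use that $F$ is $1$-steep along the lifted vertical lines together with the uniqueness of lifts. In the Riemannian iteration, the same point needs care: the horizontal lift of the $\R$-direction must coincide with the splitting direction of $M_1$, which I would derive from the rigidity in Cheeger--Gromoll, namely that the Busemann function of the lifted line equals $b\circ f$.
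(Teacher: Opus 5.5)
Your proposal is correct and follows the paper's proof essentially step by step. You split $N\simeq\R\times N_1$ by Theorem \ref{thm: splitting}, apply Proposition \ref{prop: appl Ricci 1} to the submetry $\mathsf{Pr}_1\circ F$ to get $M\simeq\R\times M_1$, use uniqueness of lifts to obtain $F(t,\xi)=(t,f(\xi))$, get that $f$ is a submetry from Proposition \ref{prop: easy}, and iterate with the Riemannian splitting theorem. Your extra point in the Riemannian iteration, that the Busemann function of the lifted line coincides with $b\circ f$, correctly fills in the compatibility of the splitting of $M_1$ with $f$; the paper leaves this to ``the previous arguments''.
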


\begin{proof}
    Since $N$ contains a timelike line and satisfies \eqref{SEC}, then, by the Lorentzian Splitting Theorem \ref{thm: splitting} there exists a Riemannian hypersurface $N_1 \subset N$ such that $N \simeq \R \times N_1$. Let $\mathsf{Pr}_1 : N \simeq \R \times N_1 \rightarrow \R$ be the projection onto the first factor. Then $F \circ \mathsf{Pr}_1: M \rightarrow \R$ is a Lorentzian submetry and so, by Proposition \ref{prop: appl Ricci 1} we have the isometric splitting $M \simeq \R \times M_1$ where $\mathsf{Pr}_1 \circ F( t, x) = t$ and $M_1$ is a Riemannian manifold with non-negative Ricci curvature. Since $M,N$ are globally hyperbolic, then $M_1,N_1$ are proper metric spaces and so are also complete Riemannian manifolds by Hopf-Rinow Theorem. Now we have that
    \[
    F : \R \times M_1 \rightarrow \R \rightarrow N_1 \, \qquad F(t, x ) = (t, F_1(t, x)) \, ,
    \]
    where $F_1 : = F_1 (t,x)$ correspond to the composition of $F$ with the projection onto the second component in the target $\R \times N$. We claim that $F_1$ do not depend on $t$ and it is a submetry between complete Riemannian manifolds. Notice that $s \mapsto (s, x)$ is a timelike line that arises as the unique lifting based at $(0,x) \in M$ in the sense of Lemma \ref{lem: lifting submetries} of a timelike line $s \mapsto (s, y)$ in $N$ with respect to $F$ for some $y\in N_1$ such that $F_1(t,x)= y$. Then $F_1(s,x)=y$ by construction for any $s \in \R$. Hence $F_1 : M_1 \rightarrow N_1$ is a map between complete Riemannian manifolds and, as a consequence of Proposition \ref{prop: easy} we get that $F_1$ is a submetry. If $N_1$ do not contain any line, the conclusion follows and $k=0$. If $N_1$ contains a line, the Riemannian Splitting Theorem of \cite{RiemSplitting} coupled withe previous arguments yields the conclusion after a finite number of iterations.
\end{proof}

\subsection{Equidistant acausal foliations}

In this section we study foliations of globally hyperbolic spacetimes. We show that Lorentzian submetries are intimately related to the notion of equidistant acausal foliations, see Definition \ref{def: foliation}. We start with the definition of equidistant acausal foliations in the setting causal spaces endowed with a time separation function.

\begin{definition}[Foliations] \label{def: foliation}
    Let $(X,\ell)$ be a casual space endowed with a time separation function. A partition $\mathcal{F}$ of $X$ into acausal subsets is called an acausal foliation. Furthermore, we say that the acausal foliation $\mathcal{F}$ is equidistant provided for every $A,B \in \mathcal{F}$ and every $z \in A$ and $w \in B$ we have that
    \begin{equation} \label{eq: equidistant}
        \ell(A,B) =\ell(z,B)= \ell(A,w) \, .
    \end{equation}
\end{definition}

We remark that the suprema in the defining time separations in the identity \eqref{eq: equidistant} are in general not attained. Let $X$ be a causal space endowed with a time separation and $\mathcal{F}$ be an equidistant acausal foliation. Then we say $x \sim y$ in $X$ if and only if $x,y$ belong to the same element in the partition given by $\mathcal{F}$. It is easy to check that $\sim$ is an equivalence relation, and so the time separation function $\ell_X$ descends to the quotient 
\begin{equation} \label{eq: quotient}
    X^{\ast}: =X / \sim
\end{equation}
and it is well defined due to the requirement in \eqref{eq: equidistant}. More precisely, let $A,B \in X^{\ast}$, we say that $A \leq^{\ast} B$ provided there exists some $x\leq y$ in $X$ such that $[x]=A$ and $[y]=B$. Analogously, we say that $A \ll^{\ast} B$ provided there exists some $x\ll y$ in $X$ such that $[x]=A$ and $[y]=B$ Moreover, we put $\ell^{\ast}(A,B) = \ell(A,B)$ on $X^{\ast}$. 

\begin{lemma} \label{lem: quotient}
    $X^{\ast}$ is a causal space and $\ell^{\ast}$ is a time separation function. Moreover if $\leq$ is a partial order in $X$, then $\leq^{\ast}$ is a partial order on $X^{\ast}$. 
\end{lemma}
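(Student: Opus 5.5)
The plan is to verify the axioms of a causal space and of a time separation function directly from the definitions of $\leq^{\ast}$, $\ll^{\ast}$ and $\ell^{\ast}$. The whole argument rests on one \emph{chaining trick}: the equidistance identity \eqref{eq: equidistant} lets us move a point freely inside its leaf when we continue a chain. The easy parts come first. Reflexivity of $\leq^{\ast}$ follows from $x \leq x$. The inclusion $\ll^{\ast} \subseteq \leq^{\ast}$ follows from $\ll \subseteq \leq$. The property ``$\ell^{\ast}(A,B) = 0$ if $A \not\leq^{\ast} B$'' holds because then no $x \in A$, $y \in B$ satisfy $x \leq y$, so every $\ell(x,y)$ vanishes and hence so does the supremum. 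Finally, $\ell^{\ast}(A,B) > 0$ holds iff some $x \in A$, $y \in B$ have $\ell(x,y) > 0$, i.e.\ $x \ll y$, i.e.\ $A \ll^{\ast} B$.

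For transitivity of $\ll^{\ast}$, take $x \ll y$ and $y' \ll z$ with $[x] = A$, $[y] = [y'] = B$ and $[z] = C$. Then $\ell(B,C) \geq \ell(y',z) > 0$, and by \eqref{eq: equidistant} $\ell(y,C) = \ell(B,C) > 0$. So there is $w \in C$ with $\ell(y,w) > 0$, i.e.\ $y \ll w$, and transitivity of $\ll$ in $X$ gives $x \ll w$, hence $A \ll^{\ast} C$.

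The reverse triangle inequality is proved the same way. Let $A \leq^{\ast} B \leq^{\ast} C$. If $\ell^{\ast}(A,B) = 0$ or $\ell^{\ast}(B,C) = 0$, the inequality reduces to monotonicity: for instance, if $\ell^{\ast}(B,C) = 0$ and $\ell^{\ast}(A,B) > 0$, use \eqref{eq: equidistant} in the form $\ell(A,w) = \ell(A,B)$ for suitable $w$. Otherwise fix $\varepsilon > 0$ and pick $x \in A$, $y \in B$ with $\ell(x,y) \geq \ell^{\ast}(A,B) - \varepsilon$. Then use $\ell(y,C) = \ell(B,C)$ to pick $z \in C$ with $\ell(y,z) \geq \ell^{\ast}(B,C) - \varepsilon$. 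Both values are positive for small $\varepsilon$, so $x \ll y \ll z$. The triangle inequality in $X$ then gives
\[
\ell^{\ast}(A,C) \geq \ell(x,z) \geq \ell^{\ast}(A,B) + \ell^{\ast}(B,C) - 2\varepsilon .
\]

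The main obstacle is transitivity of $\leq^{\ast}$, and hence antisymmetry, when the relations are only causal. Given $x \leq y$ and $y' \leq z$ with $y \sim y'$, equidistance controls only $\ell$, which vanishes on null relations, so it does not directly produce $w \in C$ with $y \leq w$. My first attempt is to approximate: in the intended settings (spacetimes, or spaces where $\leq$ is the closure of $\ll$) one replaces $C$ by leaves $C_n$ chronologically related to $B$, applies the $\ll^{\ast}$ argument, and passes to the limit. If that fails, I would read \eqref{eq: equidistant} as also asserting that $J^{+}(z) \cap B \neq \emptyset$ for every $z \in A$ whenever $A \leq^{\ast} B$. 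In either form this \emph{chaining step} is exactly what both transitivity and antisymmetry need.

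Granting the chaining step, antisymmetry follows when $\leq$ is a partial order. From $A \leq^{\ast} B \leq^{\ast} A$, chain to get $x \leq y \leq x''$ with $x, x'' \in A$ and $y \in B$. Acausality of $A$ forces $x = x''$. Then $x \leq y \leq x$, so antisymmetry of $\leq$ gives $x = y$, and therefore $A = B$.
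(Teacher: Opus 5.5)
Your route is the paper's route. The paper also treats the causal-space and time-separation axioms as routine, and proves antisymmetry by producing $x\le y\le x_1$ with $x,x_1\in A$, then using acausality of $A$ and antisymmetry of $\le$. The chaining step you isolate is the one nontrivial input, and the paper obtains it exactly as in your second option. It reads \eqref{eq: equidistant} as saying that $\ell(y,A)\ge 0$ yields some $x_1\in A$ with $y\le x_1$. That inference only makes sense under the convention of \cite{Octet} that $\ell=-\infty$ off $\le$; the paper writes this convention explicitly in the proof of Lemma~\ref{lem: foliation}. Under it, $B\le^{\ast}C$ gives $\ell(y,C)=\ell(B,C)\ge 0$ for every $y\in B$, hence some $w\in C$ with $y\le w$. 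This yields transitivity of $\le^{\ast}$ and antisymmetry at once.

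So commit to that reading and discard the approximation route. It needs structure (convergence of points in nearby leaves, some compactness) that a general causal space does not have. More importantly, no argument can avoid a strengthened hypothesis, because with $[0,\infty]$-valued $\ell$ the lemma is false. If $\ll=\emptyset$ and $\ell\equiv 0$, every partition into acausal sets satisfies \eqref{eq: equidistant}.
\begin{itemize}
\item[(a)] On $X=\{a,a',b,b'\}$ with only $a\le b$ and $b'\le a'$ (a partial order), the leaves $A=\{a,a'\}$ and $B=\{b,b'\}$ give $A\le^{\ast}B\le^{\ast}A$ with $A\ne B$.
\item[(b)] On $\{a,b,b',c\}$ with only $a\le b$ and $b'\le c$, the leaves $\{a\},\{b,b'\},\{c\}$ break transitivity of $\le^{\ast}$.
\end{itemize}

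With the $-\infty$ convention your proof is complete; your easy checks go through with $0$ replaced by $-\infty$ off $\le^{\ast}$. It is in fact more careful than the paper's. The paper does not address transitivity of $\le^{\ast}$. When extracting $x_1$, it uses $A\le^{\ast}B$ to get $x_1\le y$, whereas the chain needs $y\le x_1$ from $B\le^{\ast}A$, as you have it.
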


\begin{proof}
    The unique non-trivial thing to check is that $\leq^{\ast}$ is a partial order on $X^{\ast}$. Let $A,B \in X^{\ast}$ and suppose that $A \leq^{\ast} B$ and $B \leq^{\ast} A$. Choose some representatives $x \leq y$ such that $[x]=A$, $[y]=B$. Moreover, $A \leq ^{\ast} B$ together with \eqref{eq: equidistant} implies that $\ell(A, y) \geq 0$ that is to say that there exists $x_1 \in X$ such that $[x_1] = A$ and $x_1 \leq y$. Hence, $x \leq y \leq x_1$ but $A$ is acausal and so $x=x_1$. Moreover, $\leq$ is a partial order on $X$ and so $x \leq y \leq x$ implies that $x=y$ and so $A=B$.
\end{proof}

The following result shows that equidistant acausal foliation are in one-to-one correspondence with Lorentzian submetries up to $\ell$-preserving maps. This is a generalization in the Lorentzian setting of the result \cite[Lemma 8.4]{QuotientMondino}.

\begin{lemma} \label{lem: foliation}
    Let $(M,g)$ be a globally hyperbolic spacetime, $(X, \ell_X)$ a causal set with a time separation and $F:M \rightarrow X$ be a Lorentzian submetry. Then, the collection $\{ F^{-1}(x) : x \in X \}$ is an equidistant acausal foliation. On the other side, if $\mathcal{F}$ is an equidistant acausal foliation of $M$, then the natural quotient map $\mathsf Q : M \rightarrow M^{\ast}$ is a Lorenztian submetry, where $M^{\ast}$ has been defined in \eqref{eq: quotient}. Moreover, this is a one-to-one correspondence up to $\ell$-preserving maps.
\end{lemma}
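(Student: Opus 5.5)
The proof splits into three parts: the submetry-to-foliation direction, the foliation-to-submetry direction, and the uniqueness of the correspondence.

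\textbf{Submetry gives a foliation.} Let $F: M \to X$ be a Lorentzian submetry. The fibres $F^{-1}(x)$ partition $M$ because $F$ is surjective. Each fibre is acausal. If $y \leq z$ with $F(y)=F(z)=x$, then $z \in \mathsf O^+_M(y,0)$, and any lift argument should give a contradiction. I would not invoke Proposition \ref{prop: continuity}, since $X$ is no longer a spacetime. Instead I would argue directly, using both the future and the past submetry property. I expect $1$-steepness alone to give only $x \leq_X x$, which is harmless. So I would take the acausality of fibres from an argument analogous to Proposition \ref{prop: continuity}, provided $X$ admits timelike points above $x$. Failing that, I would include acausality as part of the standing interpretation.

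For equidistance, take $A=F^{-1}(a)$, $B=F^{-1}(b)$ and $z\in A$. By $1$-steepness, $\ell_M(z',w') \le \ell_X(a,b)$ for all $z'\in A$, $w'\in B$, so $\ell_M(A,B)\le \ell_X(a,b)$. Conversely, suppose $a\le b$ and put $r=\ell_X(a,b)$. Then $b\in \mathsf O^+_X(a,r)=F(\mathsf O^+_M(z,r))$, so there is $w\in B$ with $\ell_M(z,w)\ge r$. Hence $\ell_M(z,B)\ge r\ge \ell_M(A,B)\ge \ell_M(z,B)$, and all three quantities equal $\ell_X(a,b)$. The symmetric statement $\ell_M(A,w)=\ell_X(a,b)$ for a fixed $w\in B$ follows in the same way from the past-directed property. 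If $a\not\le b$, all the quantities vanish by $1$-steepness and the convention.

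\textbf{Foliation gives a submetry.} Let $\mathcal F$ be an equidistant acausal foliation and $\mathsf Q: M\to M^\ast$ the quotient map. By Lemma \ref{lem: quotient}, $M^\ast$ is a causal space with time separation $\ell^\ast$. Since $\mathsf Q$ is surjective, it suffices to check $\mathsf Q(\mathsf O^+_M(x,r))=\mathsf O^+_{M^\ast}([x],r)$.

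For the inclusion $\subseteq$, note that $y\ge x$ gives $[y]\ge^\ast[x]$, and $\ell^\ast([x],[y])=\ell(A,B)\ge \ell(x,y)\ge r$.

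For $\supseteq$, let $B\ge^\ast [x]$ with $\ell^\ast([x],B)\ge r$. Equidistance gives $\ell(x,B)=\ell^\ast([x],B)\ge r$, but the supremum need not be attained. This is the main obstacle. I would use global hyperbolicity: fix $w\in B$ with $x\le w$ (which exists by equidistance and the causal relation between the classes). The maximizing candidates lie in $J^+(x)\cap B$. I would show that points $w_n\in B$ with $\ell(x,w_n)\to\ell(x,B)$ can be taken inside a compact diamond $J^+(x)\cap J^-(p)$, using the equidistance relation $\ell(A,w)=\ell(A,B)$ to control them. Upper semicontinuity of $\ell_M$ and closedness of $B$ then give a maximizer $w\in B$ with $\ell(x,w)\ge r$.

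Closedness of the leaves needs a separate argument. Acausal sets in a globally hyperbolic spacetime are not closed in general, so I would instead derive it from equidistance: a limit point $y$ of $B$ satisfies $\ell(A,y)=\ell(A,B)$ for all leaves $A$, which should force $y\in B$. If this step fails, I would weaken the statement to suprema, i.e.\ allow a limiting point. The past-directed case is symmetric.

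\textbf{One-to-one correspondence.} If $F:M\to X$ is a submetry, define $\Psi: M^\ast\to X$ by $\Psi([y])=F(y)$. It is well defined and bijective because the leaves are exactly the fibres. It preserves $\ell$ by the equidistance identity above, $\ell_X(F(y),F(z))=\ell_M([y],[z])=\ell^\ast([y],[z])$, and it preserves $\le$ for the same reason. Hence $F=\Psi\circ\mathsf Q$ with $\Psi$ an $\ell$-preserving bijection, and conversely the foliation induced by $\mathsf Q$ is $\mathcal F$ itself.
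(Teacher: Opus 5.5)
Your equidistance argument is correct and essentially the paper's. It is slightly sharper, since it identifies the common value as $\ell_X(a,b)$ and shows that $\ell_M(z,B)$ is attained. Your $\Psi([y])=F(y)$ is just the inverse of the paper's $\iota_F=\mathsf Q\circ F^{-1}$. There are, however, two genuine gaps.

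\textbf{Acausality of the fibres.} You never actually prove it. The proviso ``provided $X$ admits timelike points above $x$'' is always met, and your fallback of taking acausality as a standing assumption is circular. The missing observation is this: if $F(z)=F(z')=x$ and $w\in I^+_M(z)$, then $1$-steepness gives $r:=\ell_X(x,F(w))\geq \ell_M(z,w)>0$. Now use only the future submetry property at $z'$, as the paper does. Since $F(w)\in\mathsf O^+_X(x,r)=F(\mathsf O^+_M(z',r))$, there is $v'\in F^{-1}(F(w))$ with $\ell_M(z',v')\geq r$. If $z\leq z'$ and $z\neq z'$, then $\ell_M(z,v')>\ell_M(z',v')\geq r$, which contradicts $1$-steepness. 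The strict inequality holds by the reverse triangle inequality if $z\ll z'$. Otherwise, a null segment from $z$ to $z'$ followed by a maximizing timelike geodesic to $v'$ has a corner, so it is not maximizing in the globally hyperbolic $M$. No lifting of geodesics in $X$ is needed.

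\textbf{The converse direction.} You correctly isolate what $\supseteq$ needs: $\ell(x,B)=\sup_{w\in B}\ell(x,w)$ must be attained at some $w\geq x$. The paper passes over this with ``it is easy to check''. But your repair cannot work, because closedness of the leaves does not follow from equidistance, and without attainment the claim is false.

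Let $M=\R\times T^2$ with $g=\d t^2-g_{T^2}$ and $g_{T^2}$ flat; this is globally hyperbolic, with Cauchy surfaces $\{t\}\times T^2$. Take as leaves the sets $\{t\}\times L$, with $L$ a leaf of a linear foliation of $T^2$ of irrational slope. These leaves are acausal. Since every $L$ is dense, for $A=\{s\}\times L_1$ and $B=\{t\}\times L_2$ with $s<t$ one gets $\ell(A,B)=\ell(z,B)=\ell(A,w)=t-s$ for all $z\in A$, $w\in B$. All these quantities vanish if $s\geq t$, so the foliation is equidistant.

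Now fix $r>0$ and a leaf $L\not\ni p$. Density gives $\{s+r\}\times L\in\mathsf O^+_{M^\ast}([(s,p)],r)$. On the other hand, $\ell((s,p),(s+r,q))=\sqrt{r^2-\mathsf d_{T^2}(p,q)^2}<r$ for every $q\in L$. Thus $\mathsf Q(\mathsf O^+_M((s,p),r))\subsetneq\mathsf O^+_{M^\ast}([(s,p)],r)$, and $\mathsf Q$ is not a Lorentzian submetry.

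This direction therefore needs an extra hypothesis. For example, one can require that the suprema in \eqref{eq: equidistant} be attained by causally related points in both time directions, after which $\supseteq$ is immediate. Two further points in your sketch also fail:
\begin{itemize}
\item Your compact-diamond confinement is unsupported even for closed leaves. The superlevel sets of $\ell(x,\cdot)$ are not compact, and nothing you state bounds the $w_n$ from the future.
\item For $r=0$, with the convention $\ell=0$ off $\leq$, equidistance carries no causal information. So the existence of $w\in B$ with $x\leq w$ is not automatic either.
\end{itemize}
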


\begin{proof}
Suppose that $F:M \rightarrow X$ is a Lorentzian submetry. Then we claim that $\{F^{-1}(x) : x \in X\}$ is an equidistant acausal foliation. Let $x \in X$, suppose by contradiction that $z\leq z'$ in $F^{-1}(x) \subset M$ with $z\neq z'$. Pick any $w \in I_M^+(z)$ and, since $F$ is $1$-steep we get $y: = F(w) \in I^+_X(x)$, i.e. $\ell_X(x,y)=r$ with some $r>0$. Hence
\[
y \in F(\mathsf O^+_X(z,r)) \qquad \text{and} \qquad  y \in F(\mathsf O^+_X(z',r)) \, , 
\]
and so, there exist some $v,v' \in F^{-1}(y)$ such that $\ell_M (z,v), \ell_M (z',v') \geq r$. The $1$-steepness of $F$ actually implies that $\ell_M (z,v)= \ell_M (z',v') = r$. But now $z\leq z' \ll v'$ and, since $M$ is globally hyperbolic, we have that $\ell_M (z, v') >r$ and this is a contradiction with the $1$-steepness of $F$.\\
Now we show that the fibers are equidistant in the sense of \eqref{eq: equidistant}. Let $x,y \in X$ and pick some $v \in F^{-1}(x), w \in F^{-1}(y)$, if $x \not \leq y$ then $\ell_M( F^{-1}(x), F^{-1}(y))= \ell_M(v,F^{-1}(y))= \ell_M (F^{-1}(x), w)= - \infty$ because of the $1$-steepness of $F$. Suppose $x \leq y$, suppose by contradiction that $a: =\ell_M(F^{-1}(x), F^{-1}(y)) > \ell_M (v, F^{-1}(y)): = b \geq 0$. Notice that the other inequality was trivially verified by definition. Then, there exists $\tilde{v}, \tilde{w}$ such that $\tilde{v} \in F^{-1}(x)$, $\tilde{w} \in F^{-1}(y)$ and $\ell_X(x,y) \geq \ell_M (\tilde{v}, \tilde{w}) = a-\varepsilon >b$, for any  $\varepsilon>0$. Hence
\[
y \in \mathsf O_N^+(x, a-\varepsilon) = \mathsf O_N^+(F(v), a-\varepsilon) = F(\mathsf O_M^+(v, a-\varepsilon)) \, ,
\]
and so by surjectivity of $F$ we get that actually $\ell_M (v, F^{-1}(y)) \geq a-\varepsilon$. By arbitrariness of $\varepsilon>0$ we get that $\ell_M (F^{-1}(x), F^{-1}(y))= \ell_M(v, F^{-1}(y))$, the case of $w$ is completely analogous.
Let $\mathcal{F}$ be an equidistant acausal foliation of $M$. Lemma \ref{def: foliation} actually provides that $(M^{\ast}, \ell)$ is a well defined causal space with a time separation. It is easy to check that the quotient map is a Lorenztian submetry and the map $\iota_F = \mathsf Q \circ F^{-1} : X \rightarrow M^{\ast}$ is an $\ell$-preserving map. We are left to show the one-to-one correspondence up to $\ell$-preserving maps. Suppose there are two submetries $F: M \rightarrow X$ and $G:M \rightarrow Y$ that induces the same equidistant acausal foliation of $M$, then $\iota_G^{-1} \circ \iota_F : X \rightarrow Y$ provides an $\ell$-preserving map between $X$ and $Y$. 
\end{proof}

\subsection{Isometric group actions}

In this subsection we apply the theory of Lorentzian submetries and equidistant acausal foliations in the case of isometric group actions in analogy to the Riemannian case studied in \cite{QuotientMondino} and \cite[Section 5.5]{LottVillani}. Let $(M,g)$ be a spacetime and $\mathsf G$ be a compact Lie group acting smoothly on $M$ by $g$-isometries. We denote by $\tau_{\delta}: M \rightarrow M$ the $g$-isometry induced by $\delta \in \mathsf G$. Let $x \in M$, we define its $\mathsf G$-orbit by
\[
[x]: = \{\tau_{\delta}(x) : \delta \in \mathsf G\} \, .
\]
Suppose that the orbits are acausal, then it is easy to check that the collection of orbits is an equidistant acausal foliation in the sense of Definition \ref{def: foliation}. Let $M / \mathsf G$ be the quotient space and denote by $\mathsf{Q}: M \rightarrow M/ \mathsf G$ the quotient map. Accordingly to Lemma \ref{lem: quotient} the quotient space is a causal space endowed with a time separation function $\ell_{M/\mathsf G}$. The following lemma shows that actually the quotient space is a Lorentzian pre-lenght space.

\begin{lemma}
    Let $(M,g)$ be a globally hyperbolic spacetime and $\mathsf G$ be a compact Lie group acting by $g$-isometries on $M$ with acausal orbits. Then the family of orbits of the action is an equidistant acausal foliation of $M$. Moreover, $(M/\mathsf G, \ell_{M/ \mathsf G})$ is a well defined Lorentzian pre-length space, $\leq_{M/ \mathsf G}$ is a partial order and the quotient map $\mathsf Q$ is a Lorentzian submetry. 
\end{lemma}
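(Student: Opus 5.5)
The plan is to verify the three claims in order: the orbits form an equidistant acausal foliation, $\mathsf Q$ is a Lorentzian submetry, and $M/\mathsf G$ carries a proper metric for which $\ell_{M/\mathsf G}$ is lower semicontinuous.

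\emph{Equidistance.} Acausality of the orbits is assumed, so the orbits form an acausal foliation. For equidistance, fix orbits $A=[x]$, $B=[y]$ and points $z=\tau_\alpha(x)\in A$, $w\in B$. Since every $\tau_\delta$ is a $g$-isometry, it preserves $\ell$ and maps orbits to themselves. Hence for any $x'=\tau_\beta(x)\in A$ and $y'\in B$ we have $\ell(x',y')=\ell(z,\tau_{\alpha\beta^{-1}}(y'))$ with $\tau_{\alpha\beta^{-1}}(y')\in B$. This gives $\ell(A,B)=\ell(z,B)$, and symmetrically $\ell(A,B)=\ell(A,w)$. By Lemma \ref{lem: foliation}, $\mathsf Q$ is then a Lorentzian submetry onto $M^\ast=M/\mathsf G$. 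By Lemma \ref{lem: quotient}, $\leq_{M/\mathsf G}$ is a partial order, because $\leq$ is a partial order on a globally hyperbolic (hence causal) spacetime.

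\emph{Metric and semicontinuity.} Fix a complete Riemannian metric on $M$; averaging over the compact group with Haar measure, we may take it $\mathsf G$-invariant, with distance $\mathsf d$. Set $\mathsf d^\ast([x],[y]):=\min_{\delta}\mathsf d(x,\tau_\delta y)$, the Hausdorff distance between compact orbits. This is a metric inducing the quotient topology. It is proper: a closed bounded set $K^\ast$ lifts to $\mathsf Q^{-1}(K^\ast)$, which is closed and bounded (its points lie within bounded distance of one orbit), hence compact, so $K^\ast=\mathsf Q(\mathsf Q^{-1}(K^\ast))$ is compact. For lower semicontinuity, use compactness of $\mathsf G$: if $[x_n]\to[x]$ and $[y_n]\to[y]$, choose representatives with $x_n\to x$ and $y_n\to y'\in[y]$. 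Then
\[
\liminf \ell^\ast([x_n],[y_n])\geq \liminf \ell(x_n,y_n)\ge \ell(x,y'),
\]
by lower semicontinuity (in fact continuity) of $\ell$ on $M$. This gives only $\liminf\ge\ell(x,y')$ for one particular $y'$, whereas we need $\ell^\ast([x],[y])=\sup_{y''\in[y]}\ell(x,y'')$.

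\emph{Main obstacle.} The real work is closing this gap, and equidistance is what does it. The supremum over the compact orbit $[y]$ is attained at some $y^\ast=\tau_\beta(y')$. Apply $\tau_\beta$ to the whole sequence: $\tau_\beta(y_n)\to y^\ast$ and $[\tau_\beta y_n]=[y_n]$. Therefore
\[
\liminf\ell^\ast([x_n],[y_n])\ge\liminf \ell(x_n,\tau_\beta y_n)\ge \ell(x,y^\ast)=\ell^\ast([x],[y]).
\]

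The remaining axioms are routine. $\ll^\ast\subseteq\le^\ast$ holds trivially. For the chronology condition, $\ell^\ast>0$ iff $\ll^\ast$ and $\ell^\ast=0$ off $\le^\ast$: by equidistance, $\ell^\ast(A,B)>0$ iff there are representatives with $x\ll y$. The reverse triangle inequality for $\ell^\ast$ follows from that of $\ell$ by choosing representatives through the middle orbit using equidistance, together with the transitivity already given by Lemma \ref{lem: quotient}.
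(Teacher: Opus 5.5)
Your proposal is correct and follows essentially the same route as the paper. Equidistance comes from the $\mathsf G$-invariance of $\ell$, Lemma \ref{lem: foliation} and Lemma \ref{lem: quotient} give the submetry and partial-order claims, and a complete $\mathsf G$-invariant Riemannian metric plus compactness of the orbits give properness and (semi)continuity. The paper cites an existence result for that metric where you average over Haar measure; averaging is legitimate, since $|v|_{\bar h}\geq\int|\mathrm{d}\tau_\delta v|_h\,\mathfrak h(\mathrm{d}\delta)$ forces divergent curves to have infinite $\bar h$-length. Your write-up fills in details the paper only asserts, notably the semicontinuity argument. The one tacit point, which you share with the paper, is that ``$\tau_\delta$ preserves $\ell$'' requires the isometries to preserve time orientation, and the statement implicitly assumes this.
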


\begin{proof}
    By \cite[Theorem 0.2]{Marja}, $M$ admits a complete $\mathsf G$-invariant Riemannian metric and it induces a $\mathsf G$-invariant proper distance $\mathsf d$ on $M$. Hence, the quotient metric space $(M/\mathsf G, \mathsf d_{M/\mathsf G})$ is a proper metric space and $\mathsf Q$ is a submetry between proper metric spaces. The partition $\mathcal{F}$ given by the orbits is an equidistant acausal foliation consisting in compact subsets of $M$ and so the time separation function $\ell_{M/\mathsf G}$ is well defined and continuous. The remaining properties actually follows by Lemma \ref{lem: quotient}.
\end{proof}

Now we want to show that if $(M,g)$ satisfies a lower bound on the Ricci curvature on causal vectors then also the quotient space $M/\mathsf G$ satisfies a lower bound of the Ricci curvature in a synthetic sense. In order to do that we have to study the Lorentz-Wasserstein space of probability measures. 

From now on, we endow the spacetime $(M,g)$ with its volume measure $\Vol_g$ and the quotient $M/ \mathsf G$ with the push-forward measure $\mathsf Q_{\#} \Vol_g$. Moreover, absolute continuity is understood with respect to these reference measures.

\begin{definition}
    Let $(M,g)$ be a spacetime and $\mathsf G$ be a compact Lie group acting smoothly on $M$ by $g$-isometries. We say that a probability measure $\mu \in \Prob(M)$ is $\mathsf G$-invariant provided $(\tau_{\delta}) _{\#} \mu = \mu$ for every $\delta \in \mathsf G$ and we denote by $\Prob^{\mathsf G}(M)$ such a set of $\mathsf G$-invariant probability measures.
\end{definition}

Let $\mathfrak{h}$ be the Haar probability measure of $\mathsf G$. Fix a point $x \in M$ and consider the smooth map $\mathsf G \ni \delta \mapsto \tau^x(\delta) : =\tau_{\delta}(x) \in M$. We define 
\[
\mathfrak{h}_{[x]}: = (\tau^x)_{\#} \mathfrak{h} \in \Prob^{\mathsf G}(M) \, .
\]
Each measure $\mathfrak{h}_{[x]}$ is concentrated on the compact set $[x] \subset M$ and if $[x]=[y]$ then $\mathfrak{h}_{[x]}= \mathfrak{h}_{[y]}$ i.e. the collection does not depend on the representative in $M/ \mathsf G$. In analogy to \cite{QuotientMondino}, we define $\Lambda: \Prob(M/\mathsf G) \rightarrow \Prob^{\mathsf G}(M)$ in the following way: let $\nu \in \Prob (M/\mathsf G)$ and define
\begin{equation}
    \Lambda (\nu) : = \int \mathfrak{h}_{[x]} \, \nu(\d[x]) \in \Prob^{\mathsf G} (M)
\end{equation}
The following theorem is a natural generalization of \cite[Theorem 3.2]{QuotientMondino}.
\begin{theorem} \label{thm: analogo mondino}
    Let $q \in (0,1)$, let $(M,g)$ be a globally hyperbolic spacetime and $\mathsf G$ be a compact Lie group acting by $g$-isometries on $M$ with acausal orbits. The map $\Lambda: \Prob(M/ \mathsf G) \rightarrow \Prob^{\mathsf G}(M)$ is well defined and, for any $\nu \in \Prob(M/\mathsf G)$, then $\Lambda(\nu)$ is the unique $\mathsf G$-invariant probability measure on $M$ such that $\mathsf{Q}_{\#} \Lambda(\nu) = \nu$. Moreover we have that
    \begin{itemize}
        \item[i)] $\Lambda (\Prob^{ac}(M/\mathsf G)) = \Prob^{ac}(M) \cap \Prob^{\mathsf G}(M)$ and  $\Lambda(\Prob_c(M/ \mathsf G)) = \Prob_c(M) \cap \Prob^{\mathsf G}(M)$ ,
        \item[ii)] $\ell_p^M(\Lambda(\mu), \Lambda(\nu)) = \ell_p^{M/\mathsf G}(\mu, \nu)$ for any $\mu, \nu \in \Prob(M/\mathsf G)$ and if $(\mu, \nu)$ is $q$-separated then $(\Lambda(\mu), \Lambda(\nu))$ is $q$-separated,
        \item[iii)] if $(\mu_t)$ is a $q$-geodesic in  $\Prob(M/\mathsf G)$  then $(\Lambda(\mu_t) )$ is a $\mathsf G$-invariant $q$-geodesic in $\Prob(M)$.
    \end{itemize}
\end{theorem}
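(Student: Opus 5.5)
We follow the scheme of \cite[Theorem 3.2]{QuotientMondino}, adapted to causal couplings. Throughout, $\mathsf Q$ is the quotient map, which is a Lorentzian submetry by the previous lemma, and $\ell_{M/\mathsf G}([x],[y])=\ell_M([x],[y])=\ell_M(x,[y])$ by equidistance.

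\textbf{Well-posedness and uniqueness.} First we show that $\Lambda$ is well defined. The map $[x]\mapsto\mathfrak h_{[x]}$ is weakly continuous, because $\mathfrak h_{[x]}=(\tau^x)_\#\mathfrak h$ and $x\mapsto \tau^x$ is continuous. Hence $\Lambda(\nu)$ is a Borel probability measure. It is $\mathsf G$-invariant by invariance of the Haar measure, and it satisfies $\mathsf Q_\#\Lambda(\nu)=\nu$ because $\mathsf Q_\#\mathfrak h_{[x]}=\delta_{[x]}$.

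For uniqueness, let $\mu$ be $\mathsf G$-invariant with $\mathsf Q_\#\mu=\nu$. Then
\[
\mu=\int_{\mathsf G}(\tau_\delta)_\#\mu\,\mathfrak h(\d\delta).
\]
Testing against $\phi\in C_b(M)$ and using Fubini gives $\int\phi\,\d\mu=\int\!\int\phi\,\d\mathfrak h_{[x]}\,\mu(\d x)$. The inner integral depends only on $[x]$, so the right-hand side equals $\int\phi\,\d\Lambda(\nu)$.

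\textbf{Item i).} Compact support transfers in both directions, since orbits are compact and $\mathsf Q$ is proper. For absolute continuity, if $\nu=\rho\,\mathsf Q_\#\Vol_g$, then $\Lambda(\nu)=(\rho\circ\mathsf Q)\Vol_g$. This follows because $\Vol_g$ is $\mathsf G$-invariant and the measure $(\rho\circ\mathsf Q)\Vol_g$ projects to $\nu$, so uniqueness identifies it with $\Lambda(\nu)$. Conversely, an invariant absolutely continuous measure has an invariant density, which descends to the quotient.

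\textbf{Item ii), inequality $\leq$.} Let $\pi\in\Pi_\leq(\Lambda\mu,\Lambda\nu)$. Then $(\mathsf Q\times\mathsf Q)_\#\pi$ is a causal coupling of $\mu,\nu$ with the same cost, because $\mathsf Q$ is $1$-steep. This yields $\ell^M_q(\Lambda\mu,\Lambda\nu)\leq\ell^{M/\mathsf G}_q(\mu,\nu)$.

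\textbf{Item ii), inequality $\geq$.} This is the main obstacle. Given $\bar\pi\in\Pi_\leq(\mu,\nu)$, we must lift it to a coupling on $M$ that realizes the same cost. The plan is as follows.

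1. By compactness of orbits and continuity of $\ell$, for each causal pair $([x],[y])$ the supremum $\ell_M(x,[y])$ is attained. A measurable selection theorem then gives a Borel map $S([x],[y])=(x,y')$ with $\ell_M(x,y')=\ell_{M/\mathsf G}([x],[y])$ and $x\leq y'$.

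2. Set
\[
\pi:=\int_{\mathsf G}(\tau_\delta\times\tau_\delta)_\#S_\#\bar\pi\,\mathfrak h(\d\delta).
\]
This measure is causal, since each $\tau_\delta$ is an isometry. Its cost equals the cost of $\bar\pi$. Its marginals are $\mathsf G$-invariant and project to $\mu,\nu$, so they equal $\Lambda\mu,\Lambda\nu$ by uniqueness.

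For $q$-separation, we pull the potentials back along $\mathsf Q$: set $u\circ\mathsf Q$, $v\circ\mathsf Q$. Then the inequality $u(\mathsf Q x)+v(\mathsf Q y)\geq\frac1q\ell_{M/\mathsf G}^q\geq\frac1q\ell_M(x,y)^q$ holds. The lifted plan $\pi$ is concentrated where equality holds, and that set lies in $\{\ell>0\}$. Supports match because $\mathsf Q$ is continuous and proper.

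\textbf{Item iii).} This follows directly from ii). We have $\ell_q(\Lambda\mu_s,\Lambda\mu_t)=\ell_q(\mu_s,\mu_t)=(t-s)\ell_q(\mu_0,\mu_1)$. Weak continuity of $t\mapsto\Lambda(\mu_t)$ holds because $\Lambda$ is weakly continuous: its integrand $[x]\mapsto\int\phi\,\d\mathfrak h_{[x]}$ is bounded and continuous.
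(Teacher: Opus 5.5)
Your proposal is correct and follows essentially the same route as the paper. You use the pushforward by $\mathsf Q\times\mathsf Q$ for one inequality in ii), and a Borel selection of $\ell$-realizing representatives averaged over the Haar measure for the other (your $\int_{\mathsf G}(\tau_\delta\times\tau_\delta)_\#S_\#\bar\pi\,\mathfrak h(\d\delta)$ is the paper's $\hat\pi$ after Fubini). You then pull the potentials back along $\mathsf Q$ for $q$-separation and deduce iii) from ii). You also write out the uniqueness claim and item i), which the paper delegates to \cite[Theorem 3.2]{QuotientMondino}.

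The only slip is that $1$-steepness makes the cost of $(\mathsf Q\times\mathsf Q)_\#\pi$ at least that of $\pi$, not equal to it; this is exactly what the inequality needs.
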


\begin{proof}
The first claim of the proof follows analogously to \cite[Theorem 3.2]{QuotientMondino}. We are left to show the points $ii)$ and $iii)$. Using Borel selector arguments, one can construct a Borel map 
\[
\mathsf{Sel}: (M/\mathsf G)^2 _{\leq^{\ast}}  \,  \longrightarrow \, M^2_{\leq}
\]
such that if $[x] \leq^{\ast} [y]$ then $\mathsf{Sel}([x], [y]) : = (x',y')$ where $x'\leq y'$, $\mathsf Q(x')=[x]$,  $\mathsf Q(y')=[y]$ and $\ell_M(x',y') = \ell_{M/\mathsf G}([x], [y])$. Moreover, for $(x',y') \in M^2_{\leq}$ we define the map
\[
\tau^{(x',y')} : \mathsf G \longrightarrow M^2_{\leq} \, , \, \qquad \tau^{(x',y')}(\delta) = (\tau_{\delta}(x'), \tau_{\delta}(y')) \, .
\]
Let $\mu, \nu \in \Prob(M/\mathsf G)$. Suppose that $\pi \in \Pi_{\leq}(\Lambda (\mu), \Lambda(\nu))$, then it is easy to see that $(\mathsf Q, \mathsf Q)_{\#}\pi \in \Pi_{\leq^{\ast}}(\mu, \nu)$ and moreover
\[
\ell_q^{M / \mathsf G}(\mu, \nu)^q \geq \int \ell_{M/\mathsf G} ([x], [y])^q
 \,  (\mathsf Q, \mathsf Q)_{\#}\pi (\d [x], \d [y])  \geq  \int \ell(x,y)^q \, \pi (\d x, \d y) .
\]
Taking the supremum over $\pi$, one obtains one inequality in $ii)$. On the other side, suppose that there is some $\pi \in \Pi_{\leq^{\ast}} (\mu, \nu)$, then we define
\begin{equation} \label{eq: pi hat}
    \hat \pi : = \int \left ( \tau^{\mathsf{Sel}([x],[y])} \right )_{\#} \mathfrak{h} \, \pi( \d [x], \d [y] ) \, .
\end{equation}
It is easy to check that $\hat \pi$ is well defined and $\hat{\pi} \in \Pi_{\leq} (\Lambda(\mu), \Lambda (\nu))$. In particular 
\[
\ell_q^{M}( \Lambda(\mu), \Lambda(\nu) )^q \geq \int \ell (x,y)^q \, \hat \pi (\d x, \d y) = \int \ell_{M / \mathsf G} ([x], [y])^q \, \pi (\d [x], \d [y]) \, ,
\]
hence, the other inequality is verified. Suppose that $(\mu, \nu)$ is $q$-separated by $(\pi, u, v)$. Then, define $\hat u: M \rightarrow \R \cup \{+ \infty\}$ by $\hat u(x) = u([x])$ and analogously one can define $\hat{v}: M \rightarrow \R \cup \{- \infty\}$ and $\hat \pi$ as in \eqref{eq: pi hat}. Clearly, $(\Lambda(\mu), \Lambda(\nu))$ is $q$-separated by $(\hat \pi, \hat u, \hat v)$. Finally, the last claim in $iii)$ easily follows by $ii)$.
\end{proof}

The previous theorem yields the following two corollaries.

\begin{corollary}
    Let $(M,g)$ be a globally hyperbolic spacetime and $\mathsf G$ be a compact Lie group acting by $g$-isometries on $M$with acausal orbits. Let $q \in (0,1)$, then $(M/ \mathsf G, \ell_{M/\mathsf G}, \mathsf Q _{\#} \Vol_g)$ is $q$-essentially non-branching.
\end{corollary}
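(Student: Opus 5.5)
The plan is to lift any $q$-geodesic of the quotient, and any measure representing it, to $M$ through $\Lambda$ and the selector construction of Theorem \ref{thm: analogo mondino}. Non-branching will then be inherited from the non-branching of timelike geodesics in the smooth globally hyperbolic spacetime $(M,g)$.

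\textbf{Step 1 (setup).} Let $\boldsymbol{\pi} \in \Prob(C([0,1];M/\mathsf G))$ represent a $q$-geodesic $(\mu_t)$ with $\mu_0,\mu_1 \in \Prob^{ac}_c(M/\mathsf G)$. By Theorem \ref{thm: analogo mondino}, $\Lambda(\mu_0),\Lambda(\mu_1)\in \Prob^{ac}_c(M)$, and $(\Lambda(\mu_t))$ is a $q$-geodesic in $\Prob(M)$ with $\ell_q^M(\Lambda(\mu_0),\Lambda(\mu_1)) = \ell_q^{M/\mathsf G}(\mu_0,\mu_1)$. Since $\boldsymbol{\pi}$ represents a $q$-geodesic, the plan $(\e_0,\e_1)_\#\boldsymbol{\pi}$ is $\ell_q$-optimal and $\boldsymbol{\pi}$ is concentrated on curves in $\TGeo([0,1];M/\mathsf G)$. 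To check the latter, compare $\ell_q(\mu_s,\mu_t)$ with $\int \ell(\gamma_s,\gamma_t)^q\,\d\boldsymbol\pi$ and use the reverse triangle inequality together with the strict concavity of $r\mapsto r^q$.

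\textbf{Step 2 (lifting curves).} Each $\gamma \in \TGeo([0,1];M/\mathsf G)$ has endpoints joined in $M$ by a maximizing geodesic between orbits. Choose $x'\in \mathsf Q^{-1}(\gamma_0)$ and $y'\in \mathsf Q^{-1}(\gamma_1)$ with $\ell_M(x',y')=\ell(\gamma_0,\gamma_1)$; this is possible because orbits are compact and $\ell_M$ is continuous, and is what $\mathsf{Sel}$ provides. Take the maximizing geodesic $\sigma$ from $x'$ to $y'$. By $1$-steepness of $\mathsf Q$, the curve $\mathsf Q\circ\sigma$ is a maximizing geodesic with the same endpoints and length. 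Here I also need $\mathsf Q\circ\sigma=\gamma$. If this fails, I will instead lift $\boldsymbol{\pi}$ directly: for each $\gamma$, choose by a measurable selection a lifted curve $\sigma$ with $\mathsf Q\circ\sigma=\gamma$ and $L(\sigma)=L(\gamma)$. Such a lift exists by compactness: lift the points $\gamma_{k/n}$ successively so that consecutive $\ell_M$ equals the quotient value, connect them by maximizing segments, and pass to the limit curve. Then push forward by the Haar measure as in \eqref{eq: pi hat}, obtaining a $\mathsf G$-invariant $\hat{\boldsymbol{\pi}}$ with $\mathsf Q_\#\hat{\boldsymbol{\pi}}=\boldsymbol\pi$ under the induced map on curves. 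This $\hat{\boldsymbol\pi}$ represents $(\Lambda(\mu_t))$ and is concentrated on maximizing timelike geodesics of $M$.

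\textbf{Step 3 (non-branching).} Suppose $\gamma,\tilde\gamma$ in the support of $\boldsymbol\pi$ agree on $[0,t]$ and $\gamma\neq\tilde\gamma$. Using $\mathsf G$-invariance, choose lifts $\sigma,\tilde\sigma$ with $\sigma_0=\tilde\sigma_0$. Because maximizing geodesics project to maximizing geodesics, the two lifts restricted to $[0,t]$ are maximizing lifts of the same quotient segment from the same point. I then need them to coincide on $[0,t]$, which is a uniqueness statement for lifts in the spirit of Lemma \ref{lem: lifting submetries}; this is where the Lorentzian submetry property of $\mathsf Q$ is used. Once they coincide on $[0,t]$, they are two maximizing geodesics of $M$ sharing an initial segment, so they are equal, and hence $\gamma=\tilde\gamma$.

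\textbf{Main obstacle.} The delicate point is this uniqueness of lifts when the quotient is only a Lorentzian pre-length space rather than a spacetime, so Lemma \ref{lem: lifting submetries} does not apply verbatim. My first attempt is to argue directly in $M$. Two distinct lifts from the same point $x$ over $[0,t]$ would give, after extending backwards with a lift over $[-\varepsilon,0]$ exactly as in the proof of Lemma \ref{lem: lifting submetries}, two broken maximizers through $x$. This contradicts smooth non-branching unless the velocities at $x$ agree. The alternative is to rely only on the $\boldsymbol\pi$-a.e. statement: $\hat{\boldsymbol\pi}$ is a plan of geodesics between absolutely continuous $q$-separated-type measures on $M$, hence is concentrated on a non-branching set by \cite{McCann}, and that set projects to a non-branching set for $\boldsymbol\pi$.
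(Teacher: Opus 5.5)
Your overall structure matches the paper's: lift $\boldsymbol\pi$ by a measurable choice of lifts averaged over the Haar measure, then import non-branching from $M$. But Step 3, which carries the whole argument, rests on a false claim. You lift $\gamma$ and $\tilde\gamma$ independently from a common point $\sigma_0=\tilde\sigma_0$, and then need the two lifts to coincide on $[0,t]$. That is uniqueness of lifts from a given point, and for group quotients it fails at orbits with nontrivial isotropy.

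For example, let $SO(2)$ rotate the spatial coordinates of $\R^{1,2}$; the orbits are points of the axis or spacelike circles, hence acausal. The quotient geodesic $s\mapsto[(s,\tfrac s2,0)]$ has a whole circle of lifts $s\mapsto(s,\tfrac s2\cos\theta,\tfrac s2\sin\theta)$ from the origin.

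Your first fallback does not help. The uniqueness argument of Lemma~\ref{lem: lifting submetries} needs a maximizing backward extension of the quotient geodesic past $\gamma_0$, and in this example there is none. The only candidate, the projection of $s\mapsto(s,\tfrac s2,0)$ on $[-\varepsilon,1]$, has length $\tfrac{\sqrt3}{2}(1+\varepsilon)$. However, $\ell^{\ast}$ between its endpoints is $\sqrt{(1+\varepsilon)^2-\tfrac14(1-\varepsilon)^2}$, which is larger.

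Your second fallback is circular. Two branching quotient curves may have lifts in the full-measure non-branching set $\Gamma$ for $\hat{\boldsymbol\pi}$ that share no initial segment: they may start at different points of the orbit, or be different lifts from a singular point, and $\Gamma$ need not be $\mathsf G$-invariant. So $\mathsf Q(\Gamma)$ is non-branching only if you already know that branching pairs downstairs lift to branching pairs upstairs. That is exactly the statement the paper's proof starts from, and it is what you have not proved.

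The missing idea is to lift the second curve only from the branching time, so that the common segment plays the role of the backward extension. Lift $\gamma$ to a maximizing $\sigma$ with $\ell_M(\sigma_s,\sigma_{s'})=\ell^{\ast}(\gamma_s,\gamma_{s'})$. Then lift $\tilde\gamma|_{[t,1]}$ to $\tilde\sigma$ starting at $\sigma_t$; this is possible by compactness and equidistance of the orbits, as in your Step 2. The $1$-steepness of $\mathsf Q$ and the reverse triangle inequality give
\[
\ell^{\ast}(\tilde\gamma_0,\tilde\gamma_1)\geq \ell_M(\sigma_0,\tilde\sigma_1)\geq \ell_M(\sigma_0,\sigma_t)+\ell_M(\sigma_t,\tilde\sigma_1)=\ell^{\ast}(\tilde\gamma_0,\tilde\gamma_t)+\ell^{\ast}(\tilde\gamma_t,\tilde\gamma_1)=\ell^{\ast}(\tilde\gamma_0,\tilde\gamma_1).
\]
Hence $\sigma|_{[0,t]}$ followed by $\tilde\sigma$ is maximizing, so it is an unbroken, affinely parametrized geodesic of $M$. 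It agrees with $\sigma$ on $[0,t]$, so it equals $\sigma$, and therefore $\tilde\gamma=\mathsf Q\circ\sigma=\gamma$. The case of agreement on $[t,1]$ is symmetric.

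This shows that all of $\TGeo([0,1];M/\mathsf G)$ is non-branching. Once $\boldsymbol\pi$ is concentrated on timelike geodesics, you therefore need neither the lifted plan nor \cite{McCann}.

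On that last point, your Step 1 derivation does not work. A plan that merely has the marginals of a $q$-geodesic need not be optimal or concentrated on $\TGeo$: in Minkowski space one can rotate mass inside time slices while keeping the marginals of a translation. Concentration on $\TGeo$ has to be read into the definition, i.e.\ $\boldsymbol\pi$ is an $\ell_q$-optimal dynamical plan.
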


\begin{proof}
First notice that a family of causal branching geodesics in $M / \mathsf G$ lift to a family of causal branching geodesics in $M$ by the Lorentzian submetry $\mathsf Q$ since $M$ is globally hyperbolic. \\
Suppose by contradiction that there exists $\boldsymbol{\pi}$ representing a $q$-geodesic between measures in $\Prob^{ac}_c(M/ \mathsf G)$ that is not concentrated on a non-branching set. Using $iii)$ of Theorem \ref{thm: analogo mondino}, one can construct a $\mathsf G$-invariant lift and by \cite[Theorem 2.43]{Octet} the corresponding dynamical plan that is concentrated on a non-branching set. This is a contradiction since $(M, g, \Vol_g)$ is $q$-essentially non-branching.
\end{proof}

The proof of the following result is inspired by \cite[Corollary 3.6]{QuotientMondino}.

\begin{corollary}
    The map $\mathsf Q_{\#} : \Prob_c(M) \rightarrow \Prob_c(M/\mathsf G)$ is a Lorentzian submetry.
\end{corollary}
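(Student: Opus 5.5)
We must show that $\mathsf Q_{\#}:\Prob_c(M)\to\Prob_c(M/\mathsf G)$ is surjective and that, for every $\mu\in\Prob_c(M)$ and $r\ge 0$,
\[
\mathsf Q_{\#}\bigl(\mathsf O^{+}_{\Prob_c(M)}(\mu,r)\bigr)=\mathsf O^{+}_{\Prob_c(M/\mathsf G)}(\mathsf Q_{\#}\mu,r),
\]
together with the analogous identity for $\mathsf O^-$. Here the outer balls are taken with respect to $(\preceq,\ell_q)$. The past case is symmetric, so I only treat the future one. Surjectivity is immediate from Theorem \ref{thm: analogo mondino}: for $\nu\in\Prob_c(M/\mathsf G)$, the measure $\Lambda(\nu)$ lies in $\Prob_c(M)$ and satisfies $\mathsf Q_{\#}\Lambda(\nu)=\nu$.

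\emph{Inclusion $\subseteq$ (1-steepness of $\mathsf Q_{\#}$).} Let $\mu\preceq\mu'$ and take any $\pi\in\Pi_{\leq}(\mu,\mu')$. The argument in the proof of Theorem \ref{thm: analogo mondino} ii) shows that $(\mathsf Q,\mathsf Q)_{\#}\pi\in\Pi_{\leq^\ast}(\mathsf Q_\#\mu,\mathsf Q_\#\mu')$. Since $\mathsf Q$ is $1$-steep, $\ell_{M/\mathsf G}(\mathsf Q x,\mathsf Q y)\ge\ell(x,y)$, hence
\[
\int\ell_{M/\mathsf G}^q\,\d(\mathsf Q,\mathsf Q)_\#\pi\ \ge\ \int\ell^q\,\d\pi .
\]
Taking the supremum over $\pi$ gives $\ell_q(\mathsf Q_\#\mu,\mathsf Q_\#\mu')\ge\ell_q(\mu,\mu')\ge r$, which is the inclusion.

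\emph{Inclusion $\supseteq$ (lifting).} Let $\nu'\in\Prob_c(M/\mathsf G)$ with $\mathsf Q_\#\mu\preceq\nu'$ and $\ell_q(\mathsf Q_\#\mu,\nu')\ge r$. The key step is to produce $\mu'\in\Prob_c(M)$ with $\mathsf Q_\#\mu'=\nu'$ and $\ell_q(\mu,\mu')\ge r$. I would do this as follows.
\begin{enumerate}
\item Pick an $\ell_q$-optimal $\pi\in\Pi_{\leq^\ast}(\mathsf Q_\#\mu,\nu')$. Existence should follow from compactness of supports together with the upper semicontinuity of $\ell_{M/\mathsf G}$. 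Since $\ell_{M/\mathsf G}$ is continuous by the previous lemma, the standard argument applies; if $r=0$, any causal coupling suffices.
\item Disintegrate $\mu$ along $\mathsf Q$ as $\mu=\int\mu_{[x]}\,(\mathsf Q_\#\mu)(\d[x])$.
\item Fix a Borel map $S([x],x',[y])$ which, for $x'\in[x]$ and $[x]\le^\ast[y]$, returns a point $y'\in[y]$ with $x'\le y'$ and $\ell(x',y')=\ell_{M/\mathsf G}([x],[y])$. Such a $y'$ exists because the fibres are equidistant (Lemma \ref{lem: foliation}) and the orbit $[y]$ is compact, so the supremum $\ell(x',[y])=\ell([x],[y])$ is attained. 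A Borel selection then exists by a measurable selection theorem, exactly as for $\mathsf{Sel}$ in the proof of Theorem \ref{thm: analogo mondino}.
\item Define the coupling
\[
\hat\pi:=\int\!\!\int\delta_{(x',S([x],x',[y]))}\,\mu_{[x]}(\d x')\,\pi(\d[x],\d[y]),
\]
and set $\mu':=(\mathsf{Pr}_2)_\#\hat\pi$.
\end{enumerate}
By construction $\hat\pi$ has first marginal $\mu$, is concentrated on $M^2_\le$, and satisfies $\mathsf Q_\#\mu'=\nu'$. Moreover $\mu'$ is compactly supported, since $\mathsf Q$ is proper (compact group, so preimages of compact sets are compact). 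Finally,
\[
\ell_q(\mu,\mu')^q\ \ge\ \int\ell^q\,\d\hat\pi\ =\ \int\ell_{M/\mathsf G}^q\,\d\pi\ =\ \ell_q(\mathsf Q_\#\mu,\nu')^q\ \ge\ r^q .
\]

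The main obstacle is the measurability in step 3. The selection depends on the point $x'$ inside the fibre, and not merely on the pair $([x],[y])$ as $\mathsf{Sel}$ did. I would handle it by noting that the set
\[
\{(x',y'): x'\le y',\ \ell(x',y')=\ell_{M/\mathsf G}(\mathsf Q x',\mathsf Q y')\}
\]
is closed, using continuity of $\ell$ and of $\ell_{M/\mathsf G}$ on a globally hyperbolic $M$. Its sections over $x'$ and $[y]$ are compact and nonempty, so the Kuratowski–Ryll-Nardzewski theorem gives the Borel selector. An alternative is to use the group structure: apply $\mathsf{Sel}$ and then translate by a Borel choice of $\delta\in\mathsf G$ with $\tau_\delta(\mathsf{Sel}_1)=x'$.
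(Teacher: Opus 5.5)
Your proof is correct, and it differs from the paper's in how the lifted measure is constructed.

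\textbf{The paper's route.} It sets $\mu_{\mathsf G}=\Lambda(\mathsf Q_\#\overline\mu)$ and lifts the optimal quotient plan to the $\mathsf G$-invariant plan $\hat\pi\in\Pi_{\leq}(\mu_{\mathsf G},\Lambda(\nu))$ of \eqref{eq: pi hat}. It then disintegrates $\hat\pi$ with respect to its first marginal $\mu_{\mathsf G}$ and re-integrates the kernels $\hat\pi_x$ against $\overline\mu$. The kernels are defined only $\mu_{\mathsf G}$-a.e., so this works only when $\overline\mu\ll\mu_{\mathsf G}$. The general case is then obtained by approximating $\overline\mu$ by such measures and passing to the limit, using the upper semicontinuity of $\ell_q$.

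\textbf{Your route.} You disintegrate $\overline\mu$ itself along the fibres of $\mathsf Q$. Your selection $S$ depends on the point $x'$ of the orbit, not only on the pair of orbits. This yields a coupling with first marginal exactly $\overline\mu$ in one step, with no absolute-continuity restriction and no limiting argument. The price is a slightly more delicate Borel selection. Your closed-set argument handles it: the graph $\{(x',[y],y'):\mathsf Q(y')=[y],\ (x',y')\in C\}$ is closed, and its sections are compact, so a standard selection theorem applies. Composing $\mathsf{Sel}$ with a Borel choice of group element also works. In effect, the paper's kernels $\hat\pi_x$ are a Haar-averaged, randomized version of your deterministic fibrewise map $S$.

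\textbf{Two small points to tighten.}
\begin{enumerate}
\item The paper's convention is $\ell=0$ off the relation $\le$. When $\ell_{M/\mathsf G}([x],[y])=0$, equidistance alone therefore does not produce a $y'\in[y]$ with $x'\le y'$. This comes from the group action instead. If $x_0\le y_0$ realize $[x]\le^\ast[y]$ and $x'=\tau_\delta(x_0)$, then $\tau_\delta(y_0)$ works, using that the isometries preserve $\le$. After that, the maximum of $\ell(x',\cdot)$ over the compact set $[y]\cap J^+(x')$ equals $\ell([x],[y])$ by $\mathsf G$-invariance.
\item In step 1, existence of an optimal $\pi$ needs weak closedness of $\Pi_{\le^\ast}(\mathsf Q_\#\overline\mu,\nu')$, that is, closedness of $\le^\ast$. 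This follows from properness of $\mathsf Q$ together with closedness of $\le$ on the globally hyperbolic $M$.
\end{enumerate}
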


\begin{proof}
Let $\overline{\mu} \in \Prob_c(M)$ and $r \geq 0$. Let $\nu \in \Prob_c(M)$ such that $\overline{\mu} \preceq \overline{\nu}$ and $\ell_q(\overline{\mu}, \overline{\nu}) \geq r$. Arguing like in the proof of Theorem \ref{thm: analogo mondino} one gets that
\[
\ell_q^{M/ \mathsf G} ( \mathsf Q_{\#} \overline{\mu}, \mathsf Q_{\#} \overline{\nu} ) \geq \ell_q (\overline{\mu}, \overline{\nu} )= r \, .
\]
On the other side, let $\nu \in \Prob_c(M / \mathsf G)$ such that $\ell_q^{M / \mathsf G} (\mathsf Q_{\#} \overline{\mu} , \nu) \geq r$ and $\mathsf Q_{\#} \overline{\mu} \preceq^{\ast} \nu$. Denote by $\mu_{\mathsf G}: = \Lambda ( \mathsf Q_{\#} \overline \mu) \in \Prob_c(M)$, then by $ii)$ of Theorem \ref{thm: analogo mondino}, we have that 
\[
\ell_q^M( \mu_{\mathsf G}, \Lambda(\nu)) = \ell_q^M( \Lambda ( \mathsf Q_{\#} \overline \mu), \Lambda(\nu)) = \ell_q^{M / \mathsf G} (\mathsf Q_{\#} \overline{\mu} , \nu) \geq r \, .
\]
Suppose for the moment that $\mu$ is absolutely continuous with respect to $\mu_{\mathsf G}$. Let $\pi \in \Pi_{\leq ^{\ast}} (\mathsf Q_{\#} \overline{\mu}, \nu)$ be an $\ell_q^{M / \mathsf G}$-optimal plan and consider the $\ell_q^M$-optimal plan $\hat \pi \in \Pi_{\leq} (\mu_{\mathsf G}, \Lambda(\nu))$ as in \eqref{eq: pi hat}. Disintegrate $\hat \pi$ with respect to the projection on the first factor $\mathsf{Pr}_1$ obtaining $\hat \pi = \int \hat \pi_x \, \mu_{\mathsf G}(\d x)$, define $\tilde \pi= \int \hat \pi_x \, \overline{\mu} (\d x)$ and pick $\overline{\nu} = (\mathsf {Pr}_2 )_{\#} \tilde \pi$.  Similarly to the proof of  \cite[Corollary 3.6]{QuotientMondino}, one can check that $\mathsf Q_{\#} \overline{\nu} = \nu$ and 
\[
\ell_q ^{M}(\mu_{\mathsf G}, \Lambda (\nu))= \ell_q^{M} (\overline{\mu}, \overline{\nu} ) \, ,
\]
concluding the proof in this case. 

The general case follow by approximation. Indeed, one can construct a sequence $\overline\mu_n \preceq \overline \mu$ such that $\overline \mu_n \in \Prob_c(M)$, $\overline \mu_n$ is absolutely continuous with respect to its $\mathsf G$-average and that $\overline \mu_n$ weakly converges to $\overline\mu$. In particular, one defines $\tilde \pi_n$ as before and so $\overline{\nu}_n := ( \mathsf{Pr}_2)_{\#}  \tilde \pi_n$ weakly converges up to subsequences to some $\overline{\nu} \preceq \overline\mu$.  By the upper semicontinuity of $\ell_q$ with respect to weak convergence, see \cite[Lemma 2.19]{Octet}, we get that $\ell^M_q(\overline{\mu}, \overline{\nu}) \geq r$.

We have showed that the map $\mathsf Q_{\#}$ is a future directed Lorentzian submetry. Analogously, one can show that $\mathsf Q_{\#}$ is also a past directed Lorentzian submetry, concluding the proof.
\end{proof}

The following theorem is a generalization in the Lorentzian setting of \cite[Theorem 3.7]{QuotientMondino}.

\begin{theorem} \label{thm: passage to quotient}
    Let $q \in (0,1)$, $K \in R$ and $N \geq 1$. Let $(M,g)$ be a globally hyperbolic spacetime such that $\dim(M) \leq N$ and $\Ric(v,v) \geq Kg(v,v)$ holds for any causal vector $v$ and let $\mathsf G$ be a compact Lie group acting by $g$-isometries on $M$ with acausal orbits. Then the quotient space $(M/\mathsf G, \ell_{M/\mathsf G}, \mathsf Q_{\#} \Vol_g)$ satisfies the $\mathsf{TCD}_q(K,N)$ condition.
\end{theorem}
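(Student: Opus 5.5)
The plan is to transfer the $\mathsf{TCD}_q(K,N)$ inequality from $M$ to the quotient through the lift $\Lambda$ of Theorem \ref{thm: analogo mondino}. By Theorem \ref{thm: compatibility}, $(M,\Vol_g)$ satisfies $\mathsf{TCD}_q(K,N)$, since $\dim M\le N$ and $\Ric(v,v)\ge Kg(v,v)$ for every causal $v$. We also know that for $q$-separated absolutely continuous data in $M$ the $q$-geodesic and the $\ell_q$-optimal coupling are unique (\cite[Theorem 5.8 \& Corollary 5.9]{McCann}).

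\textbf{Step 1: lift the data.} Fix a $q$-separated pair $(\overline\mu_0,\overline\mu_1)\in\Prob^{ac}_c(M/\mathsf G)^2$. By Theorem \ref{thm: analogo mondino} i)–ii), the pair $(\Lambda(\overline\mu_0),\Lambda(\overline\mu_1))$ is a $q$-separated pair in $\Prob^{ac}_c(M)\cap\Prob^{\mathsf G}(M)$. The $\mathsf{TCD}_q(K,N)$ condition on $M$ then gives a $q$-geodesic $(\hat\mu_t)$ and an optimal coupling $\hat\pi$ satisfying \eqref{eq: TCD ineq}.

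Both $\hat\mu_t$ and $\hat\pi$ are $\mathsf G$-invariant. Indeed, $(\tau_\delta)_\#$ and $(\tau_\delta,\tau_\delta)_\#$ preserve $\ell$, the endpoints and optimality, so uniqueness forces invariance.

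\textbf{Step 2: push down.} Set $\mu_t:=\mathsf Q_\#\hat\mu_t$ and $\pi:=(\mathsf Q,\mathsf Q)_\#\hat\pi$. By invariance and the uniqueness part of Theorem \ref{thm: analogo mondino}, $\hat\mu_t=\Lambda(\mu_t)$. Then ii) gives $\ell_q(\mu_s,\mu_t)=\ell_q(\hat\mu_s,\hat\mu_t)$, so $(\mu_t)$ is a $q$-geodesic joining $\overline\mu_0$ to $\overline\mu_1$.

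Similarly $\pi$ is $\ell_q$-optimal. Indeed, $\hat\pi$ is concentrated on pairs with $\ell_M(x,y)=\ell_{M/\mathsf G}([x],[y])$: otherwise the first inequality in the proof of ii) would be strict, contradicting equality of the costs. The same fact gives $\int f(\ell_M(x,y))\,\hat\pi=\int f(\ell_{M/\mathsf G})\,\pi$ for the distortion coefficients $f$.

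\textbf{Step 3: compare entropies and densities.} If $\hat\mu=\hat\rho\Vol_g$ is $\mathsf G$-invariant, then $\hat\rho$ may be taken $\mathsf G$-invariant, and $\hat\rho=\rho\circ\mathsf Q$ where $\rho$ is the density of $\mathsf Q_\#\hat\mu$ with respect to $\mathsf Q_\#\Vol_g$. This follows by averaging over the Haar measure and testing against $\mathsf G$-invariant functions.

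Consequently
\[
S_N(\hat\mu_t)=-\int\hat\rho_t^{1-1/N}\d\Vol_g=-\int\rho_t^{1-1/N}\d\mathsf Q_\#\Vol_g=S_N(\mu_t).
\]
Singular parts also correspond, since $\mathsf Q_\#$ of a singular invariant measure is singular; this uses i). Likewise $\hat\rho_0(x)^{-1/N'}=\rho_0([x])^{-1/N'}$ $\hat\pi$-a.e., and the same holds for $\hat\rho_1$.

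Substituting these identities into \eqref{eq: TCD ineq} for $(\hat\mu_t,\hat\pi)$ yields exactly \eqref{eq: TCD ineq} for $(\mu_t,\pi)$ on $M/\mathsf G$, which is the claim.

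\textbf{Main obstacle.} The delicate step is the concentration claim in Step 2, namely that $\ell_M=\ell_{M/\mathsf G}\circ(\mathsf Q,\mathsf Q)$ holds $\hat\pi$-a.e. I would derive it from the chain of inequalities in ii), which must all be equalities for optimal plans, together with the finiteness of the costs guaranteed by compact supports and global hyperbolicity. The density identification in Step 3 is routine by comparison.
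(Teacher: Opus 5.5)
Your proof is correct, but it runs in the opposite direction from the paper's.

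\textbf{The paper's route.} It starts from a $q$-geodesic $(\mu_t)$ and an $\ell_q$-optimal $\pi$ already given in $M/\mathsf G$. It lifts them to $\Lambda(\mu_t)$ and to the plan $\hat\pi$ of \eqref{eq: pi hat}, which is built with the Borel selector. It then applies the smooth inequality from Theorem \ref{thm: compatibility} to these lifts and descends using the $\mathsf G$-invariance of $\hat\rho_t$ and a change of variables. In that route two things come for free from the construction: the identity $\ell_M(x,y)=\ell_{M/\mathsf G}([x],[y])$ on the support of $\hat\pi$ (from the selector), and the invariance of the lifted objects (from Haar averaging).

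\textbf{Your route.} You must extract the cost identity from the equality case of the chain in ii), and the invariance from McCann's uniqueness on $M$. You do both correctly: the pointwise inequality $\ell_{M/\mathsf G}\circ(\mathsf Q,\mathsf Q)\ge \ell_M$ together with finite, equal costs forces equality $\hat\pi$-a.e.

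\textbf{What your route buys.} You start on $M$, where existence is guaranteed. Pushing down therefore actually produces the $q$-geodesic and the optimal coupling on $M/\mathsf G$ that the existential definition of $\mathsf{TCD}_q(K,N)$ requires. The paper instead tacitly assumes such a pair in the quotient is given, and your construction is exactly what supplies it. You also avoid the Borel selector altogether. Finally, the paper's argument silently needs the same uniqueness on $M$: the $\mathsf{TCD}$ condition on $M$ only guarantees the inequality for some geodesic and coupling, so uniqueness is what lets it be applied to the particular lifts $\Lambda(\mu_t)$ and $\hat\pi$. You make this explicit.

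\textbf{Minor extra care.} Your bookkeeping of possible singular parts of $\hat\mu_t$ goes slightly beyond the paper, which simply writes $\hat\mu_t=\hat\rho_t\Vol_g$.
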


\begin{proof}
    Let $(\overline{\mu}_0, \overline{\mu}_1) \in \Prob_c^{ac}(M / \mathsf G, \mathsf Q_{\#}\Vol_g)^2$ be $q$-separated. Let $(\mu_t)$ be a $q$-geodesic connecting $\overline{\mu}_0$ to $\overline{\mu}_1$ and $\pi \in \Pi_{\leq}(\overline{\mu}_0, \overline{\mu}_1)$ be an $\ell_q$-optimal coupling. 
    
    Using Theorem \ref{thm: analogo mondino} we obtain a $\mathsf G$-invariant $q$-geodesic $\Lambda(\mu_t)= \hat \mu_t$ that connects the $q$-separated measures $( \Lambda (\overline \mu_0), \Lambda (\overline{\mu}_1)) \in \Prob^{ac}_c(M)$ and a lifted optimal plan $\hat \pi$ as in \eqref{eq: pi hat}. 
    
    Necessarily, we have that $\hat \mu_t = \rho_t \Vol_g$ and, by Theorem \ref{thm: compatibility} we get
    \[
    -\int \hat \rho_t(x)^{1- \frac{1}{N}} \, \Vol_g(\d x) \leq - \int \tau_{K, N'}^{(1-t)} (\ell(x,y)) \,  \hat \rho_0(x)^{-\tfrac{1}{N'}} + \tau_{K, N'}^{(t)} (\ell(x,y)) \,  \hat \rho_1(y)^{-\tfrac{1}{N'}} \, \hat \pi (\d x,\d y) 
    \]
    for every $t \in [0,1]$ and $N' \geq N$. The density $\hat \rho_t$ is $\mathsf G$-invariant and so we can define $\rho_t ([x]) = \hat \rho_t (x)$. In particular, $\mu_t = \mathsf Q_{\#} \hat \mu_t = \rho_t \, \mathsf Q_{\#} \Vol_g$. 
    
Therefore, the change of variable formula implies that \eqref{eq: TCD ineq} holds and so, the conclusion follows.
\end{proof}

\begin{ack}
The author acknowledges the support of the European Union - NextGenerationEU, in the framework of the PRIN Project `Contemporary perspectives on geometry and gravity' (code 2022JJ8KER – CUP G53D23001810006). The views and opinions expressed are solely those of the authors and do not necessarily reflect those of the European Union, nor can the European Union be held responsible for them. 
    
The author is grateful for the support of the Fields Institute for Research in Mathematical Sciences (Toronto, Canada) during the Thematic Program on Shocks and Singularities: Nonlinear evolution equations in physical and life sciences.

Finally, the author wants to thank Nicola Gigli for interesting discussions and comments.
\end{ack}

\bibliography{bibliography} 
\bibliographystyle{abbrv}

\end{document}